\numberwithin{equation}{section}
\newcommand{\CC}{\mathbb{C}}
\newcommand{\RR}{\mathbb{R}}
\newcommand{\QQ}{\mathbb{Q}}
\newcommand{\ZZ}{\mathbb{Z}}
\newcommand{\M}{\mathcal{M}}
\renewcommand{\SS}{\mathcal{S}}
\newcommand{\PP}{{\mathbb P}}
\newcommand{\CW}{\mathcal{W}}
\newcommand{\KK}{{\rm K}}
\newcommand{\Cone}{{\rm Cone}}
\newcommand{\HSm}{{\rm HS}^{\rm mon}}
\newcommand{\LL}{{\mathbb L}}
\newcommand{\height}{{\rm ht}}
\newcommand{\Var}{{\rm Var}}
\renewcommand{\(}{\left(}
\renewcommand{\)}{\right)}
\renewcommand{\dim}{{\rm dim}}
\newcommand{\Vol}{{\rm Vol}}
\newcommand{\e}{\varepsilon}
\newcommand{\Spec}{{\rm Spec}}
\newcommand{\id}{{\rm id}}
\newcommand{\supp}{{\rm supp}}
\newcommand{\Int}{{\rm Int}}
\newcommand{\relint}{{\rm rel.int}}
\newcommand{\Dbc}{{\bf D}_{c}^{b}}
\newcommand{\tl}[1]{\widetilde{#1}}
\newcommand{\simto}{\overset{\sim}{\longrightarrow}}
\newcommand{\dsum}{\displaystyle \sum}
\newcommand{\fin}{\hspace*{\fill}$\Box$\vspace*{2mm}}
\newtheorem{theorem}{Theorem}[section]
\newtheorem{corollary}[theorem]{Corollary}
\newtheorem{lemma}[theorem]{Lemma}
\newtheorem{proposition}[theorem]{Proposition}
\theoremstyle{definition}
\newtheorem{definition}[theorem]{Definition}
\theoremstyle{remark}
\newtheorem{remark}[theorem]{\sc Remark}
\newtheorem{example}[theorem]{\sc Example}
\title[Monodromies at infinity of non-tame 
polynomials]{Monodromies at infinity of non-tame 
polynomials}
\subjclass[2010]{14E18, 14M25, 32C38, 32S35, 32S40}
\author{Kiyoshi Takeuchi}
\address{Institute of Mathematics, University  of 
Tsukuba, 1-1-1, Tennodai, 
Tsukuba, Ibaraki, 305-8571, Japan.} 
\email{takemicro@nifty.com}
\author{Mihai Tib\u ar}
\address{Math\'ematiques, Laboratoire Paul 
Painlev\'e, Universit\'e Lille 1,
59655 Villeneuve d'Ascq, France.}
\email{tibar@math.univ-lille1.fr}
\date{}
\begin{document}

\begin{abstract}
We consider the monodromy at infinity 
and the monodromies around the 
bifurcation points of polynomial functions 
$f : \CC^n \longrightarrow \CC$ which are 
not tame and might have non-isolated 
singularities. Our description of their Jordan blocks  
in terms of the Newton polyhedra 
and the motivic Milnor fibers 
relies on two new issues: 
the non-atypical eigenvalues of the 
monodromies and the corresponding 
concentration results for their 
generalized eigenspaces.
\end{abstract}

\maketitle

\section{Introduction}\label{sec:1}

For a polynomial map $f \colon \CC^n 
\longrightarrow \CC$, it is well-known that 
there exists a finite subset $B \subset \CC$ 
such that the restriction
\begin{equation}
\CC^n \setminus f^{-1}(B) \longrightarrow 
\CC \setminus B
\end{equation}
of $f$ is a locally trivial fibration. 
We denote by $B_f$ the smallest 
subset $B \subset \CC$ satisfying this 
condition.   We call the elements of $B_f$ 
\textit{bifurcation points of $f$}. 
Let $C_R=\{x\in \CC\ |\ 
|x|=R\}$ ($R \gg 0$) be a sufficiently 
large circle in $\CC$ such that 
$B_f\subset \{x \in \CC\ |\ |x|<R\}$. 
Then by restricting the locally 
trivial fibration $\CC^n \setminus 
f^{-1}(B_f) \longrightarrow \CC \setminus 
B_f$ to $C_R$ we obtain a geometric 
monodromy automorphism $\Phi_f^{\infty} 
\colon f^{-1}(R) \simto f^{-1}(R)$ and 
the linear maps
\begin{equation}
\Phi_j^{\infty} \colon H^j(f^{-1}(R) ;\CC) 
\simto 
H^j(f^{-1}(R) ;\CC) \ \ (j=0,1,\ldots)
\end{equation}
associated to it, where the 
orientation of $C_R$ is taken to be 
counter-clockwise as usual. We call 
$\Phi_j^{\infty}$'s the (cohomological) 
\textit{monodromies at infinity of $f$}. In the last 
few decades many mathematicians studied 
$\Phi_j^{\infty}$'s from various points of 
view. In particular in \cite{Broughton} Broughton 
proved that if $f$ is \emph{tame at infinity} 
(see Definition \ref{dfn:tame}) one has 
the concentration 
\begin{equation}\label{eq:conc} 
H^j(f^{-1}(R);\CC)=0 \quad (j \neq 0, n-1) 
\end{equation}
for the generic fiber $f^{-1}(R)$ 
($R\gg 0$) of $f$. 
In this case Libgober-Sperber \cite{L-S} 
obtained a beautiful formula which expresses 
the semisimple part (i.e. the eigenvalues) of 
$\Phi_{n-1}^{\infty}$ in terms of the Newton 
polyhedron at infinity of $f$ (see \cite{M-T-2} 
for its generalizations). Recently in 
\cite{M-T-4} (see also \cite{E-T}) 
the first author proved formulae for 
its nilpotent part, i.e. its Jordan normal form,  
by using the motivic Milnor fiber at infinity of $f$. 
However, the methods of the above cited 
papers do not apply beyond the tame case 
since one cannot insure 
the concentration of the cohomology  
\eqref{eq:conc} for non-tame polynomials. 
In what concerns the evaluation 
of the bifurcation set $B_f$, non-tame 
polynomials were studied
by N{\'e}methi-Zaharia \cite{N-Z}, 
Zaharia \cite{Zaharia} and many other mathematicians.

\medskip 
\par \indent 
We overcome here the above problem by 
showing that the desired cohomological concentration 
holds for the generalized eigenspaces of $\Phi_j^{\infty}$ 
for ``good" eigenvalues. Namely if we avoid some 
``bad" eigenvalues associated to $f$, we can 
successfully generalize the results in \cite{M-T-4} 
to non-tame polynomials and completely 
determine the Jordan normal forms of 
$\Phi_{n-1}^{\infty}$. In order to explain our 
results more precisely, let us recall some basic 
definitions. For $f(x)=\sum_{v\in \ZZ_+^n} 
a_vx^v$ ($a_v\in \CC$) we call the convex hull of 
$\supp f= \{ v \in \RR^n_+ \ | \ a_v \not= 0 \}$ in 
$\RR^n$ the Newton polytope of $f$ and denote it 
by $NP(f)$. After Kushnirenko \cite{Kushnirenko}, 
the convex hull $\Gamma_{\infty}(f) \subset 
\RR_+^n$ of $\{ 0 \} \cup NP(f)$ in $\RR^n$ is 
called the Newton polyhedron at infinity of $f$. 

\begin{definition}\label{COVEN}
We say that $f$ is \emph{convenient} if $\Gamma_{\infty}(f)$ 
intersects the positive part of the $i$-th axis 
of $\RR^n$ for any $1 \leq i \leq n$. 
\end{definition}
If $f$ is convenient and non-degenerate 
at infinity (for the definition, see 
Definition \ref{dfn:3-3}), 
then by a result of 
Broughton \cite{Broughton} it is tame at infinity. 
However here we do not assume that 
$f$ is convenient. In Definition 
\ref{AEV} by using the Newton polyhedron at infinity 
$\Gamma_{\infty}(f)$ we define a finite subset 
$A_f \subset \CC$ of ``bad" eigenvalues 
which we call \emph{atypical 
engenvalues} of $f$. Then the following result plays 
a key role in this paper. 
For $\lambda \in \CC$ and $j \in \ZZ$ let 
$H^{j}(f^{-1}(R);\CC)_{\lambda} \subset 
H^{j}(f^{-1}(R);\CC)$ be the 
generalized eigenspace for the eigenvalue 
$\lambda$ of the monodromy at infinity 
$\Phi_{j}^{\infty}$. 

\begin{theorem}\label{CONI} 
Let $f \in \CC [x_1, \ldots, x_n]$ be a 
non-convenient polynomial such that 
$\dim \Gamma_{\infty}(f)=n$. Assume 
that $f$ is non-degenerate at infinity. 
Then for any non-atypical eigenvalue 
$\lambda \notin A_f$ of $f$ we have 
the concentration 
\begin{equation}
H^{j}( f^{-1}(R);\CC)_{\lambda} 
\simeq 0 \qquad (j \not= n-1) 
\end{equation}
for the generic 
fiber $f^{-1}(R) \subset \CC^n$ 
($R \gg 0$) of $f$. 
\end{theorem}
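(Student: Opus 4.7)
The plan is to compactify $\CC^n$ by a smooth toric variety adapted to $\Gamma_\infty(f)$, extend $f$ to a proper meromorphic function, and then analyze the nearby cycle complex at infinity stratum by stratum; the hypothesis $\lambda \notin A_f$ will be used to annihilate the contributions of the ``non-compact'' toric strata, which are precisely what spoils the tameness.

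Concretely, first take a smooth projective subdivision of the normal fan of $\Gamma_\infty(f)$ and let $X$ denote the resulting smooth toric compactification of $\CC^n$. Its boundary $D_\infty := X \setminus \CC^n$ is a simple normal crossing divisor whose irreducible components $D_\rho$ are indexed by the rays $\rho$ of the fan; each such ray corresponds either to a compact facet of $\Gamma_\infty(f)$ (a ``good'' direction, on which $f$ behaves as in the tame case) or to a facet contained in a coordinate hyperplane (a ``bad'' direction, responsible for the failure of tameness). After a further blow-up resolving the indeterminacy of $1/f$, one obtains a proper morphism $\tl{f}\colon \tl{X} \to \PP^1$, and for $R \gg 0$ the cohomology $H^j(f^{-1}(R);\CC)$ is computed by the hypercohomology of the nearby cycle complex $\psi_{1/\tl{f}}(\CC_{\tl{X}})$ along $\tl{f}^{-1}(\infty)$, compatibly with the decomposition into generalized $\lambda$-eigenspaces.

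Second, the toric stratification of $\tl{X}$ yields a Mayer--Vietoris type spectral sequence expressing $H^j(f^{-1}(R);\CC)_\lambda$ in terms of the $\lambda$-eigenparts of the restrictions of $\psi_{1/\tl{f}}(\CC_{\tl{X}})$ to each toric stratum. Thanks to the non-degeneracy at infinity, on each stratum the restriction of $f$ is combinatorially controlled, and its local monodromy eigenvalues can be computed by an A'Campo/Varchenko-type formula in terms of the supporting face of $\Gamma_\infty(f)$. The set $A_f$ is engineered so as to contain every eigenvalue that can arise on a ``bad'' stratum; hence for $\lambda \notin A_f$ the $\lambda$-generalized eigenpart of $\psi_{1/\tl{f}}(\CC_{\tl{X}})$ is identically zero on each such stratum. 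Only the strata coming from the ``good'' rays can then contribute to the $\lambda$-eigenspace, and on each of these the non-degeneracy together with $\dim \Gamma_\infty(f) = n$ reduces the situation locally to a Newton-non-degenerate tame polynomial, for which Broughton's classical argument yields cohomological concentration in degree $n-1$. Assembling the contributions through the spectral sequence gives the asserted concentration $H^j(f^{-1}(R);\CC)_\lambda = 0$ for $j \neq n-1$.

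The main obstacle is the bookkeeping in the second step: one must verify that the purely combinatorial definition of $A_f$ really captures every eigenvalue that can appear in the local monodromy on each toric stratum coming from a non-compact face of $\Gamma_\infty(f)$. This requires translating the lattice data of each bad face into an explicit characteristic polynomial for the local monodromy and matching it factor by factor with the product defining $A_f$; once this is done, the concentration on the remaining strata is essentially classical.
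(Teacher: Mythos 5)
Your setup (toric compactification adapted to the Newton polyhedron, blow-up to resolve $1/f$, nearby cycle complex along the divisor at infinity) matches the paper's; and your idea that $A_f$ is designed precisely to kill the monodromy eigenvalues on the ``bad'' (horizontal/atypical) boundary strata is also the correct intuition, and is in the paper implemented via an inductive Proposition \ref{ABC} about Laurent polynomials. But the final step of your argument — ``only the good strata contribute; on each of these we locally reduce to a tame polynomial and invoke Broughton, then assemble through a spectral sequence'' — is where a genuine gap opens up, and it is where the paper does something completely different.

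The problem is twofold. First, Broughton's concentration theorem is a \emph{global} statement about the generic fiber of a tame polynomial (it uses the bouquet-of-spheres homotopy type), not a stratumwise local statement one can apply to the restriction of a nearby cycle sheaf to a single toric orbit; there is no obvious sense in which the nearby cycle complex on a single ``good'' orbit ``is'' a tame polynomial. Second, even granting per-stratum concentrations, the toric strata at infinity have varying dimensions $n-\dim\sigma$, so their contributions land in varying degrees, and assembling them through a Mayer--Vietoris spectral sequence does not automatically collapse to degree $n-1$: you would have to control the $E_2$ page and its degeneration, which you neither do nor indicate how to do. The paper avoids both problems entirely by a much cleaner mechanism: it applies $R\kappa_!=R\kappa_*$ to the distinguished triangle $i_!\CC_{\CC^n}\to Ri_*\CC_{\CC^n}\to (i_D)_*i_D^{-1}(Ri_*\CC_{\CC^n})\to{+1}$, shows that the third term's $\lambda$-part nearby cycle vanishes for $\lambda\notin A_f$ (this is where the horizontal stratum analysis and the inductive Laurent-polynomial lemma enter), hence $\psi_{h,\lambda}(j_!Rf_!\CC_{\CC^n})\simeq\psi_{h,\lambda}(j_!Rf_*\CC_{\CC^n})$, i.e.\ $H^j_c(f^{-1}(R);\CC)_\lambda\simeq H^j(f^{-1}(R);\CC)_\lambda$; and since $f^{-1}(R)$ is a smooth affine variety of dimension $n-1$, Artin vanishing kills $H^j_c$ for $j<n-1$ and $H^j$ for $j>n-1$, giving the concentration with no stratum assembly at all. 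You should replace your ``good-strata/Broughton/spectral sequence'' finale with this compactly-supported vs.\ ordinary cohomology comparison plus Artin vanishing.
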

This theorem allows non-isolated singularities 
of $f$ and also the situation where the fibers 
may have cohomological perturbation 
``at infinity". Indeed, some of its 
atypical fibers $f^{-1}(b)$ $(b \in B_f)$ 
e.g. $f^{-1}(0)$ have non-isolated singularities 
in general. This is the main reason why the 
monodromies of non-tame polynomials could not be studied 
successfully before. In the ``tame" case one has 
only isolated singularities in $\CC^n$ 
and either vanishing cycles 
at infinity do not occur at all or they occur at isolated points only (in the sense of \cite{S-T-1}, \cite{Tibar-book}), 
and then the concentration of cohomology 
\eqref{eq:conc} follows. 

Theorem \ref{CONI} will be proved 
by refining the proof of Sabbah's 
theorem \cite[Theorem 13.1]{Sabbah-2} in our situation. 
More precisely we construct a nice compactification 
$\tl{X_{\Sigma}}$ of $\CC^n$ and study the 
``horizontal" divisors at infinity for $f$ in 
$\tl{X_{\Sigma}} \setminus \CC^n$ very precisely to 
prove the concentration. 
With Theorem \ref{CONI} at hand, 
by using the results in \cite[Section 2]{M-T-4} 
we can easily prove the generalizations of 
\cite[Theorems 5.9, 5.14 and 5.16]{M-T-4} 
to non-tame polynomials and completely 
determine the $\lambda$-part of the Jordan 
normal form of $\Phi_{n-1}^{\infty}$ for any 
$\lambda \notin A_f$. 
Let us explain one of our results, which 
generalizes \cite[Theorem 5.9]{M-T-4}. 
Denote by $\Cone_{\infty}(f)$ 
the closed cone $\RR_+ \Gamma_{\infty}(f) 
\subset \RR^n_+$ generated by 
$\Gamma_{\infty}(f)$. 
Let $q_1,\ldots,q_l$ (resp. $\gamma_1,\ldots, 
\gamma_{l^{\prime}}$) be the 
$0$-dimensional (resp. $1$-dimensional) 
faces of $\Gamma_{\infty}(f)$ such 
that $q_i\in \Int (\Cone_{\infty}(f))$ (resp. 
the relative interior 
$\relint(\gamma_i)$ of $\gamma_i$ is 
contained in 
$\Int(\Cone_{\infty}(f))$). 
For each $q_i$ (resp. $\gamma_i$), 
denote by $d_i >0$ (resp. $e_i>0$) its 
``lattice distance" from 
the origin $0\in \RR^n$ (see Section 
\ref{sec:2} for the precise definition). 
For $1\leq i 
\leq l^{\prime}$, let $\Delta_i$ be the 
convex hull of $\{0\}\sqcup 
\gamma_i$ in $\RR^n$. Then for $\lambda 
 \not= 1$ and $1 \leq 
i \leq l^{\prime}$ such that $\lambda^{e_i}=1$ we set
\begin{equation}
n(\lambda)_i
= \sharp\{ v\in \ZZ^n \cap \relint(\Delta_i) 
\ |\ \height (v, \gamma_i)=k\} 
+\sharp \{ v\in \ZZ^n \cap \relint(\Delta_i) 
\ |\ \height (v, 
\gamma_i)=e_i-k\},
\end{equation}
where $k$ is the minimal positive 
integer satisfying 
$\lambda=\zeta_{e_i}^{k}$ 
(we set $\zeta_{d}:=\exp 
(2\pi\sqrt{-1}/d) \in \CC$) 
and for $v\in \ZZ^n \cap \relint(\Delta_i)$ we 
denote by $\height (v, \gamma_i)$ the 
lattice height of $v$ from the base 
$\gamma_i$ of $\Delta_i$. 
Then we have the following extension of 
\cite[Theorem 5.9]{M-T-4} from tame 
to non-tame polynomials. 

\begin{theorem}
Assume that 
$\dim \Gamma_{\infty}(f)=n$ and 
$f$ is non-degenerate at infinity. Then for any 
$\lambda \notin A_f$ we have
\begin{enumerate}
\item The number of the Jordan blocks 
for the eigenvalue $\lambda$ with the 
maximal possible size $n$ in 
$\Phi_{n-1}^{\infty} \colon 
H^{n-1}(f^{-1}(R);\CC) \simto H^{n-1}(f^{-1}(R);\CC)$ 
($R \gg 0$) is equal 
to $\sharp \{q_i \ |\ \lambda^{d_i}=1\}$.
\item The number of the Jordan blocks for 
the eigenvalue $\lambda$ with the second maximal possible size 
$n-1$ in $\Phi_{n-1}^{\infty}$ is 
equal to $\sum_{i \colon \lambda^{e_i}=1} 
n(\lambda)_i$.
\end{enumerate}
\end{theorem}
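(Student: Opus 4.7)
The plan is to transfer the tame-case argument of \cite[Theorem 5.9]{M-T-4} to the non-tame setting, using Theorem \ref{CONI} in place of Broughton's concentration \eqref{eq:conc}. Once the $\lambda$-part of $H^\ast(f^{-1}(R);\CC)$ is concentrated in a single degree, the combinatorial counting of Jordan blocks from the Newton polyhedron $\Gamma_{\infty}(f)$ goes through essentially unchanged.

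In more detail, the first step is the reduction: fixing $\lambda \notin A_f$, Theorem \ref{CONI} gives $H^j(f^{-1}(R);\CC)_{\lambda} = 0$ for $j \neq n-1$, so the Jordan form of $\Phi_{n-1}^{\infty}$ on the $\lambda$-eigenspace is, up to a sign, the only contribution of $\lambda$ to any alternating sum built from the monodromy at infinity, and more generally it is fully recorded by the $\lambda$-weight spectrum of any motivic object computing the generic fiber cohomology. Invariants extracted from a motivic Milnor fiber at infinity $\mathcal{S}_f^{\infty}$ therefore read off directly the Jordan data we are after. The second step is to construct $\mathcal{S}_f^{\infty}$: I would reuse the toric compactification $\widetilde{X_{\Sigma}}$ produced in the proof of Theorem \ref{CONI}, whose boundary divisor splits into vertical components (mapping to $\infty \in \PP^1$ under the compactified $f$) and horizontal ones. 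The motivic vanishing cycles along the vertical divisors then give $\mathcal{S}_f^{\infty}$, exactly as in \cite[Sections 4--5]{M-T-4}.

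The third step is the face-by-face computation. For each face $\tau$ of $\Gamma_{\infty}(f)$ whose relative interior lies in $\Int(\Cone_{\infty}(f))$, an A'Campo--Denef--Loeser type formula expresses the contribution of the corresponding toric stratum to $\mathcal{S}_f^{\infty}$ as a motivic class controlled by $\tau$ and its lattice height function. Passing to the $\lambda$-part of the Hodge realization and reading off the weight filtration then yields, in parallel with \cite[Section 2]{M-T-4}: (1) exactly one Jordan block of maximal size $n$ per $0$-dimensional face $q_i$ with $\lambda^{d_i}=1$, and (2) $n(\lambda)_i$ Jordan blocks of size $n-1$ per $1$-dimensional face $\gamma_i$ with $\lambda^{e_i}=1$, where $n(\lambda)_i$ emerges from the Varchenko-type lattice point count on $\relint(\Delta_i)$ split according to the height from $\gamma_i$ and from the opposite vertex $0$. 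The main obstacle is to verify that the horizontal divisors of $\widetilde{X_{\Sigma}}$, which are absent in the tame situation, do not contaminate the $\lambda$-part for $\lambda \notin A_f$: concretely, that the local monodromies along them carry only eigenvalues lying in $A_f \cup \{1\}$. This is precisely the combinatorial content encoded in the definition of $A_f$ and in the refinement of Sabbah's theorem \cite[Theorem 13.1]{Sabbah-2} behind Theorem \ref{CONI}, and it is what allows the tame-case counting of \cite[Theorem 5.9]{M-T-4} to be exported wholesale, giving assertions (1) and (2).
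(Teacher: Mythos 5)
Your proposal is correct and follows essentially the same route as the paper: concentration via Theorem \ref{CONI}, the motivic Milnor fiber at infinity $\SS_f^{\infty}$ built from the compactification $\tl{X_{\Sigma}}$, Theorem \ref{thm:7-12} to reduce $[H_f^{\infty}]_{\lambda}$ to a sum over admissible faces at infinity (which is exactly where the horizontal divisors are shown not to contaminate the $\lambda$-part for $\lambda\notin A_f$), and then the combinatorics of \cite[Section 2]{M-T-4} applied through Theorem \ref{MAIN} to read off the maximal and second-maximal Jordan blocks. The one small point worth making precise is that one first computes $[H_f^{\infty}]_{\lambda}$ as a sum over \emph{all} admissible faces at infinity and only afterwards observes, via the weight estimates of \cite[Section 2]{M-T-4}, that only the $0$- and $1$-dimensional faces with relative interior in $\Int(\Cone_{\infty}(f))$ contribute to sizes $n$ and $n-1$; restricting to those faces from the outset is a shortcut that should be justified rather than assumed.
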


\medskip \par 
\indent We can treat in a similar manner 
the monodromies at bifurcation points of
$f$. Let $b \in B_f$ be such a bifurcation 
point. Choose sufficiently 
small $\e >0$ such that 
\begin{equation}
B_f \cap \{ x \in \CC \ | \ |x-b| \leq \e \} 
= \{ b \} 
\end{equation}
and set $C_{\e}(b)= 
\{ x \in \CC \ | \ |x-b|= \e \} \subset \CC$. 
Then we obtain a locally trivial fibration 
$f^{-1}(C_{\e}(b)) \longrightarrow C_{\e}(b)$ 
over the small circle $C_{\e}(b) \subset \CC$ 
and the monodromy automorphisms 
\begin{equation}
\Phi_j^b \colon H^j(f^{-1}(b+ \e ) ;\CC) 
\simto 
H^j(f^{-1}(b+ \e ) ;\CC) \ \ (j=0,1,\ldots)
\end{equation}
around the atypical fiber $f^{-1}(b) \subset \CC^n$ 
associated to it. In Section \ref{sec:6} we apply 
our methods to the Jordan normal forms of 
$\Phi_j^b$'s. If $f$ is 
not convenient, then for some $b \in B_f$ the atypical 
fiber $f^{-1}(b) \subset \CC^n$ may have 
``singularities at infinity". Even in such 
cases, we can define a finite subset 
$A_{f,b}^{\circ} \subset \CC$ of ``bad" eigenvalues 
for $b \in B_f$ and completely determine 
the $\lambda$-part of the Jordan 
normal form of $\Phi_{n-1}^{b}$ for any 
$\lambda \notin A_{f,b}^{\circ}$. 
In fact we obtain these results more generally, 
for polynomial maps $f: U \longrightarrow \CC$ 
of affine algebraic varieties $U$. 
See Section \ref{sec:6} for the details.

\section{Preliminaries}\label{sec:2}

Let $f \colon \CC^n 
\longrightarrow \CC$ be the polynomial map 
in Section \ref{sec:1}. 
To study its monodromies at infinity 
$\Phi_j^{\infty}$, we often impose the 
following natural condition.

\begin{definition}[\cite{Kushnirenko}]\label{dfn:tame}
Let $\partial f\colon \CC^n 
\longrightarrow \CC^n$ be the map defined by 
$\partial f(x)=(\partial_1f(x), 
\ldots, \partial_n f(x))$. Then we say that 
$f$ is \emph{tame at infinity} if the 
restriction $(\partial f)^{-1}(B(0;\e )) 
\longrightarrow B(0;\e )$ of $\partial f$ 
to a sufficiently small ball 
$B(0;\e )$ centered at the origin 
$0 \in \CC^n$ is proper.
\end{definition}

The following result is fundamental in 
the study of monodromies at infinity.

\begin{theorem}[Broughton 
\cite{Broughton} and Siersma-Tib{\u a}r 
\cite{S-T-1}]\label{tame}
Assume that $f$ is tame at infinity (more generally, 
with isolated $\CW$-singularities, see \cite{S-T-1}). 
Then the generic fiber $f^{-1}(c)$ ($c 
\in \CC \setminus B_f$) of $f$ 
has the homotopy type of the bouquet of 
$(n-1)$-spheres. In particular, we have
\begin{equation}
H^j(f^{-1}(c);\CC)=0 \quad (j \neq 0, n-1).
\end{equation} \fin
\end{theorem}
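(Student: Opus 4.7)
The plan is to prove both claims: (a) the cohomological concentration $H^j(F_c;\CC)=0$ for $j \notin \{0,n-1\}$, and (b) that the generic fibre $F_c := f^{-1}(c)$ is homotopy equivalent to a bouquet of $(n-1)$-spheres.

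First I would handle the top-degree vanishing by noting that $F_c$ is a smooth affine hypersurface, hence a Stein manifold of complex dimension $n-1$; by the Andreotti--Frankel theorem it has the homotopy type of a CW complex of real dimension at most $n-1$, so $H^j(F_c;\CC)=0$ for $j>n-1$ automatically. For the substantive middle-degree vanishing $H^j=0$ with $0<j<n-1$, the mechanism is that tameness precludes vanishing cycles at infinity. Concretely, choose a smooth (e.g.\ toric) compactification $\overline{X} \supset \CC^n$ and resolve indeterminacy to obtain a proper morphism $\tilde{f}: \tilde{X} \to \PP^1$; set $D_\infty := \tilde{X} \setminus \CC^n$. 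Tameness, or more generally the hypothesis of isolated $\CW$-singularities of \cite{S-T-1}, enforces the Thom $a_{\tilde{f}}$-regularity along a Whitney stratification of $D_\infty$. This guarantees that the nearby cycle complex $\psi_{\tilde{f}-c}(\CC_{\tilde{X}})$ is supported on $D_\infty \cap \tilde{f}^{-1}(c)$ at only finitely many points, each contributing local Milnor cohomology concentrated in degrees $0$ and $n-1$ by Milnor's classical theorem for isolated hypersurface singularities.

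A Mayer--Vietoris comparison between $F_c$ and a small tubular neighbourhood of $D_\infty \cap \tilde{f}^{-1}(c)$, supplemented by the analogous local decomposition at each of the finitely many critical points of $f$ inside $\CC^n$ (all at finite distance by tameness, since $\{\partial f = 0\}$ is algebraic and compact hence finite), delivers the required concentration for $F_c$. For part (b), $F_c$ is connected and, for $n \geq 3$, simply connected by Hamm's affine Lefschetz theorem applied to a generic linear pencil on $F_c$, the tameness preventing $\pi_1$-contributions from monodromy at infinity. A simply connected CW complex of dimension $\leq n-1$ with reduced cohomology concentrated and free in degree $n-1$ is homotopy equivalent to $\bigvee S^{n-1}$; the number of spheres equals $\dim H^{n-1}(F_c;\CC) = (-1)^{n-1}(\chi(F_c)-1)$, which via $\chi(\CC^n)=1$ and the Euler-characteristic computation over the fibration $\CC^n \setminus f^{-1}(B_f) \to \CC \setminus B_f$ matches the total Milnor number of $f$.

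The main obstacle is the vanishing of nearby cycles along $D_\infty$. In Broughton's original strictly tame setting this follows from $|\partial f|$ being bounded below outside a compact set, which topologically trivialises $\tilde{f}$ in a neighbourhood of $D_\infty$ over a small disk around $c \in \PP^1$. For the Siersma--Tib\u{a}r extension to isolated $\CW$-singularities one must instead invoke L\^e's attaching result to describe the local Milnor fibre of $\tilde{f}$ at each singular point on $D_\infty$ and show that each such fibre is again a bouquet of $(n-1)$-spheres, so that its contribution to $F_c$ stays in the correct degree. This step is where the tameness/$\CW$ hypothesis is really used; everything else is formal.
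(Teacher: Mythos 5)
The paper does not prove this statement: it is quoted from Broughton \cite{Broughton} and Siersma--Tib\u ar \cite{S-T-1} and stated with no argument, so your proposal can only be judged on its own merits. Your overall strategy (Andreotti--Frankel for the top degree, localization of the failure of triviality at the finitely many affine critical points and the finitely many isolated singular points at infinity, local Milnor/L\^e bouquets in degree $n-1$) is indeed the Siersma--Tib\u ar line of argument, and the cohomological part (a) is essentially workable along the lines you describe, modulo the usual care that the local data at infinity are vanishing cycles of $\tilde f$ along the divisor at infinity rather than classical isolated hypersurface singularities of a function on $\CC^n$ --- a point you acknowledge.

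The genuine gap is in part (b), the simple connectivity of $F_c$ for $n\geq 3$. Hamm's affine Lefschetz theorem compares $F_c$ with a generic hyperplane section $F_c\cap H$ and yields $\pi_j(F_c,F_c\cap H)=0$ only for $j\leq \dim F_c-1$; iterating, it exhibits $\pi_1(F_c)$ as a quotient of the fundamental group of a curve section, i.e.\ of a free group, which is no information at all. Moreover, simple connectivity of the generic fibre is genuinely a consequence of tameness and not of smoothness plus $n\geq 3$: for the (non-tame) polynomial $f=xy$ on $\CC^3$ the generic fibre is $\CC^*\times\CC$ with $\pi_1\cong\ZZ$. So your $\pi_1$ step, as written, fails. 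The correct argument --- and the actual engine of both cited proofs --- is that tameness (or isolated $\CW$-singularities) implies that the contractible space $\CC^n$ has, up to homotopy, the structure of $F_c$ with finitely many cells of dimension $n$ attached, one group of cells for each affine critical point and each singular point at infinity. Since attaching $n$-cells with $n\geq 3$ does not change $\pi_1$, one gets $\pi_1(F_c)\cong\pi_1(\CC^n)=1$; the same cell structure gives $\tilde H_j(F_c)=0$ for $j\leq n-2$ directly from the exact sequence of the pair $(\CC^n,F_c)$, which is cleaner than the Mayer--Vietoris comparison you sketch. Once you have this attaching structure, your final homotopy-theoretic step (a simply connected complex of dimension $\leq n-1$ with free reduced homology concentrated in degree $n-1$ is a wedge of $(n-1)$-spheres) correctly finishes the proof; without it, the bouquet statement is not reached.
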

If $f$ is tame at 
infinity, then of course $\Phi_{n-1}^{\infty}$ is 
the unique non-trivial monodromy at infinity. 

\begin{definition}[\cite{Kushnirenko}]\label{dfn:3-3}
We say that the polynomial $f(x)=\sum_{v\in \ZZ_+^n} 
a_vx^v$ ($a_v\in \CC$) is 
\emph{non-degenerate at infinity} if for any 
face $\gamma$ of $\Gamma_{\infty}(f)$ 
such that $0 \notin \gamma$ the complex 
hypersurface $\{x \in (\CC^*)^n\ |\ 
f_{\gamma}(x)=0\}$ in $(\CC^*)^n$ is 
smooth and reduced, where we set 
$f_{\gamma}(x)=\sum_{v \in \gamma 
\cap \ZZ_+^n} a_vx^v$.
\end{definition}

If $f$ is convenient and non-degenerate 
at infinity, then by a result of 
Broughton \cite{Broughton} it is tame at infinity. 
However in this paper, we do not assume that 
$f$ is convenient. 

\begin{definition}
Assume that $\dim \Gamma_{\infty}(f)=n$. Then 
we say that a face $\gamma \prec \Gamma_{\infty}(f)$ 
is \emph{atypical} if $0 \in \gamma$ and 
there exists a facet i.e. an 
$(n-1)$-dimensional face $\Gamma$ of 
$\Gamma_{\infty}(f)$ containing $\gamma$ 
whose non-zero inner conormal 
vectors are not contained in the 
first quadrant $\RR^n_+$ of $\RR^n$. 
\end{definition}

\begin{remark}
Our definition above is closely related to that of 
\emph{bad faces} of $NP(f)$ in N{\'e}methi-Zaharia \cite{N-Z}. 
If $\gamma \prec NP(f)$ is a bad face of $NP(f)$, then 
the convex hull of $\{ 0 \} \cup \gamma$ in $\RR^n$ 
is an atypical one of $\Gamma_{\infty}(f)$. However, not
all the atypical faces of $\Gamma_{\infty}(f)$ 
are obtained in this way. 
\end{remark} 

\begin{example}
Let $n=3$ and consider a non-convenient polynomial 
$f(x,y,z)$ on $\CC^3$ whose Newton polyhedron at 
infinity $\Gamma_{\infty}(f)$ is the convex hull of 
the points $(2,0,0), (2,2,0), (2,2,3) \in \RR^3_+$ 
and the origin $0=(0,0,0) \in \RR^3$. Then the line 
segment connecting the point $(2,2,0)$ 
and the origin $0 \in \RR^3$ is an atypical 
face of $\Gamma_{\infty}(f)$.  However the 
triangle whose vertices are the points 
$(2,0,0)$, $(2,2,0)$ 
and the origin $0 \in \RR^3$ is not so. 
\end{example}

\begin{example}
Let $n=3$ and consider a non-convenient polynomial 
$f(x,y,z)$ on $\CC^3$ whose Newton polyhedron at 
infinity $\Gamma_{\infty}(f)$ is the convex hull of 
the points $(2,0,0), (0,2,0), (1,1,2) \in \RR^3_+$ 
and the origin $0=(0,0,0) \in \RR^3$. Then the line 
segment connecting the point $(2,0,0)$ 
and the origin $0 \in \RR^3$ is an atypical 
face of $\Gamma_{\infty}(f)$. 
\end{example}
If $\dim \Gamma_{\infty}(f)=n$, to the $n$-dimensional 
integral polytope $\Gamma_{\infty}(f)$ in $\RR^n$ 
we can naturally associate a subdivision of 
(the dual vector space of) $\RR^n$ into 
rational convex polyhedral cones as follows. For an element 
$u \in \RR^n$ of 
(the dual vector space of) $\RR^n$ define the 
supporting face $\gamma_u \prec  \Gamma_{\infty}(f)$ 
of $u$ in $ \Gamma_{\infty}(f)$ by 
\begin{equation}
\gamma_u = \left\{ v \in \Gamma_{\infty}(f) \ | \ 
\langle u , v \rangle 
= 
\min_{w \in \Gamma_{\infty}(f)} 
\langle u ,w \rangle \right\}. 
\end{equation}
Then we introduce an equivalence relation $\sim$ on 
(the dual vector space of) $\RR^n$ by 
$u \sim u^{\prime} \Longleftrightarrow 
\gamma_u = \gamma_{u^{\prime}}$. We can easily 
see that for any face $\gamma \prec  \Gamma_{\infty}(f)$ 
of $\Gamma_{\infty}(f)$ the closure of the 
equivalence class associated to $\gamma$ in $\RR^n$ 
is an $(n- \dim \gamma )$-dimensional rational 
convex polyhedral cone $\sigma (\gamma)$ in $\RR^n$. Moreover 
the family $\{ \sigma (\gamma) \ | \ 
\gamma \prec  \Gamma_{\infty}(f) \}$ of cones in $\RR^n$ 
thus obtained is a subdivision of $\RR^n$ and 
satisfies the axiom of fans (see \cite{Fulton} and 
\cite{Oda} etc.). We call it the dual fan of 
$\Gamma_{\infty}(f)$. Then we have the following 
characterization of atypical 
faces of $\Gamma_{\infty}(f)$. 

\begin{lemma} 
Assume that $\dim \Gamma_{\infty}(f)=n$ and let 
$\gamma \prec \Gamma_{\infty}(f)$ be a face of 
$\Gamma_{\infty}(f)$ such that $0 \in \gamma$. 
Then $\gamma$ is atypical if and only if 
the cone $\sigma ( \gamma )$ 
which corresponds to it in the dual 
fan of $\Gamma_{\infty}(f)$ is not contained in 
$\RR^n_+$. 
\fin
\end{lemma}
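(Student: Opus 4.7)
The plan is to reduce the statement to the standard description of cones in the dual fan as being generated by the inner conormal vectors of the facets. Concretely, I would first record the following general fact about the dual fan of an $n$-dimensional polytope $P = \Gamma_{\infty}(f)$: for every face $\gamma \prec P$, the cone $\sigma(\gamma)$ is generated, as a rational convex polyhedral cone, by the primitive inner conormal vectors $u_{\Gamma_1}, \ldots, u_{\Gamma_s}$ of the facets $\Gamma_1, \ldots, \Gamma_s$ of $P$ that contain $\gamma$. This is standard and follows directly from the definition $\sigma(\gamma) = \overline{\{u \in \RR^n : \gamma \subset \gamma_u\}}$: each facet containing $\gamma$ gives a one-dimensional subcone $\sigma(\Gamma_i) = \RR_+ u_{\Gamma_i} \subset \sigma(\gamma)$, and $\sigma(\gamma)$ is exactly their sum.

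With this generation statement at hand, both implications become essentially tautological. For the direction ``atypical $\Rightarrow \sigma(\gamma) \not\subset \RR^n_+$'', if $\Gamma \supset \gamma$ is a facet whose non-zero inner conormal vector $u_{\Gamma}$ fails to lie in $\RR^n_+$, then $u_{\Gamma} \in \sigma(\Gamma) \subset \sigma(\gamma)$, so $\sigma(\gamma)$ itself is not contained in $\RR^n_+$. For the reverse direction, suppose $\sigma(\gamma) \not\subset \RR^n_+$ and pick $u \in \sigma(\gamma) \setminus \RR^n_+$. By the generation statement, $u = \sum_{i=1}^{s} c_i u_{\Gamma_i}$ for some $c_i \geq 0$. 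Since $\RR^n_+$ is a convex cone containing $0$, if every $u_{\Gamma_i}$ lay in $\RR^n_+$ then so would $u$. Hence at least one $u_{\Gamma_i}$ is not in $\RR^n_+$, and the facet $\Gamma_i \supset \gamma$ witnesses atypicality of $\gamma$.

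The only step requiring any real verification is the generation statement for $\sigma(\gamma)$ used above; everything else is a one-line convexity argument. Strictly speaking, one could sidestep even this by arguing directly: the relative interior of $\sigma(\gamma)$ consists of those $u$ for which $\gamma_u = \gamma$, and its boundary is the union of $\sigma(\gamma')$ over proper supersets $\gamma' \supsetneq \gamma$; in particular the extremal rays of $\sigma(\gamma)$ are exactly the $\sigma(\Gamma_i)$ with $\Gamma_i$ a facet containing $\gamma$. I expect no obstacle of substance here: the content of the lemma is really just an unpacking of the duality between a face of $P$ and the cone of linear functionals supported on that face, together with the trivial observation that a cone generated by vectors lies in $\RR^n_+$ if and only if each generator does.
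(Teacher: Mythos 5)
Your argument is correct: the paper states this lemma without proof (it is marked as immediate), and the justification it has in mind is exactly the one you give, namely that $\sigma(\gamma)$ is the cone generated by the primitive inner conormal vectors of the facets containing $\gamma$, so $\sigma(\gamma)\subset\RR^n_+$ holds precisely when all those conormals lie in $\RR^n_+$. Both of your implications are sound, so there is nothing to add.
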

For a subset $S \subset \{ 1,2, \ldots, n \}$ 
we define a coordinate subspace $\RR^S \simeq 
\RR^{|S|}$ of $\RR^n$ by 
\begin{equation}
\RR^S = \{ v=(v_1, \ldots, v_n) \in \RR^n \ | \ 
v_i=0 \quad \text{for any} \quad i \notin S \}. 
\end{equation}
The following lemma should be obvious. 

\begin{lemma} 
Assume that $\dim \Gamma_{\infty}(f)=n$ and let 
$\gamma \prec \Gamma_{\infty}(f)$ be a face of 
$\Gamma_{\infty}(f)$ such that $0 \in \gamma$. 
Let $\RR^S \subset \RR^n$ be the minimal 
coordinate subspace of $\RR^n$ containing 
$\gamma$ and assume that $\dim \gamma < \dim 
\RR^S = |S|$. Then $\gamma$ is an atypical 
face of $\Gamma_{\infty}(f)$. 
\fin
\end{lemma}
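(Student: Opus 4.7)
The plan is to invoke the equivalent characterization of atypicality from the preceding lemma and show that $\sigma(\gamma) \not\subseteq \RR^n_+$. I argue by contradiction: assuming $\sigma(\gamma) \subseteq \RR^n_+$, I derive a dimension-based contradiction. The first step uses a standard property of dual fans: for every $u \in \sigma(\gamma)$, the supporting face $\gamma_u$ contains $\gamma$, hence $u$ attains the value $\min_{w \in \Gamma_{\infty}(f)} \langle u, w \rangle$ at every point of $\gamma$. Evaluating at $0 \in \gamma$ shows this minimum equals $0$, so $\langle u, v \rangle = 0$ for all $v \in \gamma$; consequently $\sigma(\gamma) \subseteq \gamma^{\perp} := \{u \in \RR^n : \langle u, v \rangle = 0 \text{ for all } v \in \gamma\}$.

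The second step, which I expect to be the main point requiring care, is to identify $\gamma^{\perp} \cap \RR^n_+$. Writing $T = \{1, \ldots, n\} \setminus S$, I claim that $\gamma^{\perp} \cap \RR^n_+ = \RR^T_+ := \{u \in \RR^n : u_i = 0 \text{ for } i \in S,\ u_i \geq 0 \text{ for } i \in T\}$. The inclusion $\supseteq$ is clear from $\gamma \subseteq \RR^S$. For $\subseteq$, fix $u \in \gamma^{\perp} \cap \RR^n_+$; for every $v \in \gamma$, the equality $\langle u, v \rangle = \sum_i u_i v_i = 0$ is a sum of non-negative terms, so each product $u_i v_i$ vanishes. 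Hence for each $i \in S$, either $u_i = 0$ or $v_i = 0$ holds for every $v \in \gamma$; the second alternative would force $\gamma \subseteq \RR^{S \setminus \{i\}}$, contradicting the minimality of $\RR^S$, so $u_i = 0$ for all $i \in S$. This step crucially combines both the sign constraints on $u$ and $v$ and the minimality of the coordinate subspace $\RR^S$.

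Combining the two steps under the contradiction hypothesis yields $\sigma(\gamma) \subseteq \gamma^{\perp} \cap \RR^n_+ = \RR^T_+$, so $\dim \sigma(\gamma) \leq |T| = n - |S|$. On the other hand, the dimension formula for the dual fan recalled in the excerpt gives $\dim \sigma(\gamma) = n - \dim \gamma$, and the hypothesis $\dim \gamma < |S|$ rewrites as $n - \dim \gamma > n - |S|$. This is the desired contradiction, and by the preceding lemma it proves that $\gamma$ is atypical.
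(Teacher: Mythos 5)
Your proof is correct; the crux is the identification $\gamma^{\perp} \cap \RR^n_+ = \{u \in \RR^n_+ \mid u_i = 0 \text{ for } i \in S\}$, where you correctly use both the non-negativity of $\gamma \subset \RR^n_+$ and the minimality of $\RR^S$ to kill $u_i$ for $i \in S$, after which the dimension count $n - \dim\gamma > n - |S|$ contradicts $\sigma(\gamma) \subseteq \RR^n_+$. The paper states this lemma with no written proof (``should be obvious''), but your route through the dual-fan characterization of atypicality established in the immediately preceding lemma is plainly the one the paper intends.
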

By this lemma we can easily prove the following 
proposition. 

\begin{proposition}\label{ADD-1} 
Assume that $\dim \Gamma_{\infty}(f)=n$ and a face 
$\gamma \prec \Gamma_{\infty}(f)$ of 
$\Gamma_{\infty}(f)$ such that $0 \in \gamma$ 
is not atypical. Let $\RR^S \subset \RR^n$ be the minimal 
coordinate subspace of $\RR^n$ containing 
$\gamma$. Then we have $\dim \gamma = 
\dim \RR^S = |S|$ and there exist exactly 
$n - |S|$ facets i.e. $(n-1)$-dimensional faces 
$\Gamma_i \prec \Gamma_{\infty}(f)$ ($i \notin S$) of 
$\Gamma_{\infty}(f)$ containing $\gamma$. 
Moreover they are explicitly given by 
\begin{equation}
\Gamma_i = \Gamma_{\infty}(f) \cap 
\{ v=(v_1, \ldots, v_n) \in \RR^n \ | \ v_i=0 \} 
\qquad (i \notin S). 
\end{equation} \fin
\end{proposition}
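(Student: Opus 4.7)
The plan is to combine the two preceding characterizations of (non\-)atypicality: the coordinate-geometry criterion via the minimal coordinate subspace $\RR^S$ (the lemma immediately preceding the proposition), and the dual-fan criterion $\sigma(\gamma)\subset \RR^n_+$ (the earlier characterization lemma).

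For part (1), the preceding lemma says that $\dim\gamma < \dim \RR^S$ already forces $\gamma$ to be atypical. Since $\gamma$ is non-atypical by hypothesis and $\gamma\subset \RR^S$ trivially gives $\dim\gamma \leq |S|$, we conclude $\dim\gamma = |S|$.

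For parts (2) and (3), I would work in the dual fan, where facets $\Gamma \prec \Gamma_{\infty}(f)$ containing $\gamma$ correspond bijectively to the one-dimensional faces (extreme rays) of the cone $\sigma(\gamma)$, which has dimension $n-|S|$ by part (1). The aim is to identify
\begin{equation}
\sigma(\gamma) \;=\; \RR^{\{1,\ldots,n\}\setminus S} \cap \RR^n_+.
\end{equation}
One inclusion is immediate: for each $i\notin S$ every $v\in\gamma$ satisfies $v_i=0$ because $\gamma\subset \RR^S$, and $0$ is the minimum of the coordinate $v_i$ over $\Gamma_{\infty}(f)\subset\RR^n_+$; hence $\gamma\subset\gamma_{e_i}$, i.e.\ $e_i\in\sigma(\gamma)$, where $e_i$ denotes the $i$-th standard coordinate vector. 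For the reverse inclusion, the earlier characterization lemma yields $\sigma(\gamma)\subset\RR^n_+$ because $\gamma$ is non-atypical; its linear span has dimension $n-|S|$ and already contains the $(n-|S|)$-dimensional subspace $\RR^{\{1,\ldots,n\}\setminus S}$, so the two spans coincide, and intersecting $\sigma(\gamma)$ with $\RR^n_+$ inside this common span forces the claimed equality.

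Once $\sigma(\gamma)$ is identified with the positive orthant in $\RR^{\{1,\ldots,n\}\setminus S}$, its extreme rays are exactly the $n-|S|$ rays $\RR_+ e_i$ for $i\notin S$; the dual-fan correspondence then matches them with the facets $\gamma_{e_i} = \Gamma_{\infty}(f)\cap\{v_i=0\} = \Gamma_i$, yielding both the count and the explicit description in (2) and (3). The only mildly delicate point is confirming that the linear span of $\sigma(\gamma)$ is exactly $(n-|S|)$-dimensional, but this is the standard formula $\dim\sigma(\gamma) = n-\dim\gamma$ built into the dual-fan construction recalled in the excerpt, so I do not foresee any serious obstacle beyond this bookkeeping.
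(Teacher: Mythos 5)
Your argument is correct and follows exactly the route the paper intends (the proof is left to the reader with the remark that it follows ``easily'' from the two preceding lemmas): the coordinate-subspace lemma gives $\dim\gamma=|S|$, and the dual-fan characterization of non-atypicality pins down $\sigma(\gamma)=\sum_{i\notin S}\RR_+\overrightarrow{e_i}$, whose edges correspond to the facets $\Gamma_i=\Gamma_{\infty}(f)\cap\{v_i=0\}$. Your write-up supplies the bookkeeping (span of $\sigma(\gamma)$, the order-reversing correspondence between faces containing $\gamma$ and faces of $\sigma(\gamma)$) that the paper omits, and it is sound.
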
 

\begin{definition} 
We say that a face $\gamma 
\prec \Gamma_{\infty}(f)$ of $\Gamma_{\infty}(f)$ 
is \emph{at infinity} if $0 \notin \gamma$. 
We say that such a face $\gamma$ is moreover 
\emph{admissible} if it is not contained in any atypical 
face of $\Gamma_{\infty}(f)$. 
\end{definition}

For a face at infinity 
$\gamma \prec 
\Gamma_{\infty}(f)$ of $\Gamma_{\infty}(f)$, 
let $\Delta_{\gamma}$ be the convex hull of 
$\{0\} \sqcup \gamma$ in $\RR^n$. Denote by 
$\LL(\Delta_{\gamma})$ the $(\dim 
\gamma +1)$-dimensional linear subspace 
of $\RR^n$ spanned by $\Delta_{\gamma}$ 
and consider the lattice 
$M_{\gamma}=\ZZ^n \cap \LL(\Delta_{\gamma}) 
\simeq \ZZ^{\dim \gamma+1}$ in it. 
Then there exists a unique non-zero primitive 
vector $u_{\gamma}$ in its dual lattice 
which takes its maximum in $\Delta_{\gamma}$ 
exactly on $\gamma \prec \Delta_{\gamma}$: 
\begin{equation}
\gamma = \left\{ v \in \Delta_{\gamma} \ | \ 
\langle u_{\gamma} ,v \rangle 
= 
\max_{w \in \Delta_{\gamma}} 
\langle u_{\gamma} ,w \rangle \right\}. 
\end{equation}
We set 
\begin{equation}
d_{\gamma}
= \max_{w \in \Delta_{\gamma}} 
\langle u_{\gamma} ,w \rangle \in \ZZ_{>0} 
\end{equation}
and call it the 
lattice distance of $\gamma$ from the 
origin $0 \in \RR^n$. The following definition 
will be used in Sections \ref{sec:3} and \ref{sec:5}. 

\begin{definition}\label{AEV} 
Assume that $\dim \Gamma_{\infty}(f)=n$. 
Then we say that a complex number $\lambda \in \CC$ 
is an \emph{atypical eigenvalue of $f$} if either 
$\lambda =1$ or there 
exists a non-admissible face at infinity $\gamma 
\prec \Gamma_{\infty}(f)$ of 
$\Gamma_{\infty}(f)$ such that 
$\lambda^{d_{\gamma}}=1$. We denote by $A_f 
\subset \CC$ the set of the atypical 
eigenvalues of $f$. 
\end{definition}

\begin{example}
Let $n=2$ and consider a non-convenient polynomial 
$f(x,y)$ on $\CC^2$ whose Newton polyhedron at 
infinity $\Gamma_{\infty}(f)$ is the convex hull of 
the points $(1,3), (3,0), (3,2) \in \RR^2_+$ 
and the origin $0=(0,0) \in \RR^2$. 
Then the line segment connecting the point $(1,3)$ and 
the origin is the unique atypical face of 
$\Gamma_{\infty}(f)$ and we have $A_f = \{ 1 \}$. 
\end{example}
Let $\overrightarrow{e_i} 
= ^t(0, \ldots, 0, \overset{i}{1}, 0, \ldots, 0) 
\in \RR^n$ ($i=1,2, \ldots, n$) be 
the standard basis of 
$\RR^n$. The following result, whose proof 
immediately follows from Proposition \ref{ADD-1}, 
will be used in Section \ref{sec:3}.

\begin{proposition}\label{ADD-2} 
Assume that $\dim \Gamma_{\infty}(f)=n$ and let 
$\gamma \prec \Gamma_{\infty}(f)$ 
be a face at infinity of 
$\Gamma_{\infty}(f)$. 
Let $\RR^S \subset \RR^n$ be the minimal 
coordinate subspace of $\RR^n$ containing 
$\gamma$ and $\sigma \subset \RR^n$ 
the cone in the dual fan of $\Gamma_{\infty}(f)$ 
which corresponds to 
$\gamma \prec \Gamma_{\infty}(f)$. Then 
$\gamma$ is admissible if and only if 
there exist some integral vectors 
$\overrightarrow{f_1}, \ldots, \overrightarrow{f_k} 
\in (\RR^n \setminus \RR^n_+) \cap \ZZ^n$ such that 
\begin{equation}
\min_{v \in \Gamma_{\infty}(f)} 
\langle \overrightarrow{f_j} ,v \rangle < 0 
\qquad (1 \leq j \leq k) 
\end{equation}
and 
\begin{equation}
\sigma = ( \sum_{i \notin S} \RR_+ 
\overrightarrow{e_i} ) 
+  ( \sum_{j=1}^k \RR_+ 
\overrightarrow{f_j} ) . 
\end{equation}\fin
\end{proposition}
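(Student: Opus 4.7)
The plan is to read off both sides of the equivalence from the normal fan of $\Gamma_{\infty}(f)$. Since $\dim \Gamma_{\infty}(f)=n$, the cone $\sigma=\sigma(\gamma)$ is a pointed rational polyhedral cone whose extreme rays are generated by the primitive inward conormals of the facets of $\Gamma_{\infty}(f)$ that contain $\gamma$. I would first classify these conormals as follows: a facet $\Gamma$ with inward conormal $u\in\RR^n_+$ is automatically a coordinate facet $\Gamma_i=\Gamma_{\infty}(f)\cap\{v_i=0\}$ (this uses $\Gamma_{\infty}(f)\subset\RR^n_+$ together with $\dim\Gamma=n-1$, which forces exactly one coordinate of $u$ to be strictly positive and hence $u=c\overrightarrow{e_i}$); a facet with $u\notin\RR^n_+$ and $\min_{v\in\Gamma_{\infty}(f)}\langle u,v\rangle=0$ is an atypical facet (it still contains $0$); and a facet with $\min<0$ is a facet at infinity. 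These three classes are mutually exclusive and exhaustive.

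For the forward implication, assume $\gamma$ is admissible and let $\gamma_0\prec \Gamma_{\infty}(f)$ denote the smallest face containing $\gamma\cup\{0\}$. Since $\Gamma_{\infty}(f)\cap\RR^S$ is itself a face (being the intersection with the supporting coordinate hyperplanes $\{v_i=0\}$, $i\notin S$) which contains $\gamma\cup\{0\}$, we obtain $\gamma_0\subset\RR^S$, so $\RR^S$ is also the minimal coordinate subspace containing $\gamma_0$. Admissibility of $\gamma$ forbids atypical faces above $\gamma$, so in particular $\gamma_0$ itself is not atypical; Proposition~\ref{ADD-1} applied to $\gamma_0$ then produces exactly $n-|S|$ facets containing $\gamma_0$, all of them coordinate, namely the $\Gamma_i$ for $i\notin S$. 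These are precisely the coordinate facets of $\Gamma_{\infty}(f)$ containing $\gamma$, and no atypical facet can contain $\gamma$ (such a facet would itself be an atypical face above $\gamma$). The remaining facets containing $\gamma$ are therefore facets at infinity, and their inward conormals furnish the integral vectors $\overrightarrow{f_j}\in (\RR^n\setminus\RR^n_+)\cap\ZZ^n$ with $\min_{v\in\Gamma_{\infty}(f)}\langle\overrightarrow{f_j},v\rangle<0$. Summing the corresponding extreme rays yields the claimed decomposition of $\sigma$.

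For the reverse implication, pointedness of $\sigma$ ensures that every extreme ray of $\sigma$ coincides (up to positive scaling) with one of the given generators $\overrightarrow{e_i}$ ($i\notin S$) or $\overrightarrow{f_j}$. If some atypical face were to contain $\gamma$, then a fortiori an atypical facet $\Gamma$ would contain $\gamma$, and its inward conormal $u_\Gamma$ would span an extreme ray of $\sigma$. However $u_\Gamma\notin\RR^n_+$ excludes the possibility $\RR_+u_\Gamma=\RR_+\overrightarrow{e_i}$, while $\min_{v\in\Gamma_{\infty}(f)}\langle u_\Gamma,v\rangle=0$ excludes $\RR_+u_\Gamma=\RR_+\overrightarrow{f_j}$, a contradiction.

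The step I expect to be the main obstacle is the reduction in the forward direction from admissibility of the at-infinity face $\gamma$ to non-atypicality of the associated face $\gamma_0$ containing $0$, because only then can Proposition~\ref{ADD-1} be invoked directly to guarantee that each coordinate intersection $\Gamma_i$ ($i\notin S$) is actually an $(n-1)$-dimensional facet containing $\gamma$. Once this reduction and the classification of inward conormals are in place, the remainder of the argument is routine bookkeeping with extreme rays of pointed polyhedral cones.
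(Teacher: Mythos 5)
Your proof is correct and follows exactly the route the paper intends, namely deducing the statement from Proposition~\ref{ADD-1} via the three-way classification of facets containing $\gamma$ (coordinate facets with conormal $\overrightarrow{e_i}$, atypical facets with conormal outside $\RR^n_+$ but $\min=0$, and facets at infinity with $\min<0$), together with the identification of $\sigma(\gamma)$ as the cone generated by the inner conormals of facets containing $\gamma$. The paper marks this proposition as an immediate consequence of Proposition~\ref{ADD-1} and supplies no written proof, so there is nothing further to compare; one small remark is that the reduction you flag as the ``main obstacle'' (passing from admissibility of $\gamma$ to non-atypicality of $\gamma_0$) is in fact immediate from the definition of admissibility, since $\gamma\subset\gamma_0$.
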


\section{Motivic Milnor fibers at infinity}\label{sec:3}

From now, following Denef-Loeser 
\cite{D-L-1}, \cite{D-L-2} and 
Guibert-Loeser-Merle \cite{G-L-M} we 
introduce motivic reincarnations 
of global (Milnor) fibers of polynomial 
maps. For the details, see 
Matsui-Takeuchi \cite{M-T-4}, 
Esterov-Takeuchi \cite{E-T} and 
Raibaut \cite{Raibaut}. 
We also follow the terminologies 
in \cite{Dimca}, \cite{H-T-T} and \cite{K-S} etc. 
Let $f: \CC^n \longrightarrow \CC$ be a 
general polynomial map. 
First, take a smooth compactification 
$X$ of $\CC^n$. 
Next, by eliminating the 
points of indeterminacy of the 
meromorphic extension of $f$ to $X$ we obtain 
a commutative diagram

\begin{equation}
\begin{CD}
\CC^n  @>{\iota}>> \tl{X}
\\
@V{f}VV   @VV{g}V
\\
\CC @>>{j}> \PP^1 
\end{CD}
\end{equation}
such that horizontal arrows are open 
embeddings, $g$ is a proper 
holomorphic map and $\tl{X} \setminus \CC^n$, 
$Y:=g^{-1}( \infty )$ are 
normal crossing divisors in $\tl{X}$. 
Take a local coordinate $h$ of $\PP^1$ in a 
neighborhood of $\infty\in \PP^1$ such that 
$\infty=\{h=0\}$ and set 
$\tl{g}=h\circ g$. Note that $\tl{g}$ is a 
holomorphic function defined on 
a neighborhood of the closed subvariety 
$Y=\tl{g}^{-1}(0)=g^{-1}(\infty) 
\subset \tl{X} \setminus \CC^n$ of 
$\tl{X}$. Then for $R \gg 0$ we have
\begin{equation}\label{eq:4-2}
H_c^j(f^{-1}(R);\CC) \simeq 
H^j \psi_h (j_! Rf_! \CC_{\CC^n}) \simeq 
H^j \psi_h ( Rg_! \iota_! \CC_{\CC^n}) \simeq 
H^j(Y; \psi_{\tl{g}}(\iota_! \CC_{\CC^n})), 
\end{equation}
where $\psi_h$ and $\psi_{\tl{g}}$ are 
nearby cycle functors 
(for the definition, see \cite{Dimca} and \cite{K-S} 
etc.). Let us define an open 
subset $\Omega$ of $\tl{X}$ by
\begin{equation}
\Omega=\Int (\iota(\CC^n) \sqcup Y)
\end{equation}
and set $U=\Omega \cap Y$. 
Then $U$ (resp. the complement of $\Omega$ in 
$\tl{X}$) is a normal crossing divisor 
in $\Omega$ (resp. $\tl{X}$). By using this 
very special geometric situation 
we can easily prove the isomorphisms
\begin{equation}\label{eq:4-5} 
H^j(Y;\psi_{\tl{g}}(\iota_! \CC_{\CC^n}))
\simeq H^j(Y; 
\psi_{\tl{g}}(\iota_!^{\prime}
\CC_{\Omega}))\simeq H_c^j(U; 
\psi_{\tl{g}}(\CC_{\tl{X}})),
\end{equation}
where $\iota^{\prime} \colon \Omega 
\hookrightarrow \tl{X}$ is the 
inclusion. Now let $E_1, E_2, \ldots, E_k$ 
be the irreducible components of 
the normal crossing divisor $U=\Omega 
\cap Y$ in $\Omega \subset \tl{X}$. 
For each $1 \leq i \leq k$, let $b_i>0$ be 
the order of the zero of $\tl{g}$ 
along $E_i$. For a non-empty subset $I 
\subset \{1,2,\ldots, k\}$, let us 
set
\begin{equation}
E_I=\bigcap_{i \in I} E_i,\hspace{10mm}
E_I^{\circ}=E_I \setminus \bigcup_{i 
\not\in I}E_i
\end{equation}
and $d_I= {\rm gcd} (b_i)_{i \in I}>0$. Then, 
as in \cite[Section 3.3]{D-L-2}, we 
can construct an unramified Galois 
covering $\tl{E_I^{\circ}} 
\longrightarrow E_I^{\circ}$ of $E_I^{\circ}$ 
as follows. First, for a point 
$p \in E_I^{\circ}$ we take an affine open 
neighborhood $W \subset \Omega 
\setminus ( \cup_{i \notin I} E_i)$ of $p$ 
on which there exist regular functions 
$\xi_i$ $(i \in I)$ 
such that $E_i \cap W=\{ \xi_i=0 \}$ 
for any $i \in I$. Then on $W$ we have 
$\tl{g}=\tl{g_{1,W}} (\tl{g_{2,W}})^{d_I}$, 
where we set 
$\tl{g_{1,W}}=\tl{g} \prod_{i \in I}\xi_i^{-b_i}$ 
and $\tl{g_{2,W}}=\prod_{i 
\in I} \xi_i^{\frac{b_i}{d_I}}$. 
Note that $\tl{g_{1,W}}$ is a unit on $W$ 
and $\tl{g_{2,W}} \colon W \longrightarrow 
\CC$ is a regular function. It is 
easy to see that $E_I^{\circ}$ is covered 
by such affine open subsets $W$ of 
$\Omega \setminus ( \cup_{i \notin I} E_i)$. 
Then as in \cite[Section 3.3]{D-L-2} 
by gluing the varieties
\begin{equation}\label{eq:6-26}
\tl{E_{I,W}^{\circ}}=\{(t,z) \in 
\CC^* \times (E_I^{\circ} \cap W) \ |\ 
t^{d_I} =(\tl{g_{1,W}})^{-1}(z)\}
\end{equation}
together in the following way, 
we obtain the variety $\tl{E_I^{\circ}}$ over 
$E_I^{\circ}$. If $W^{\prime}$ 
is another such open subset and 
$\tl{g}=\tl{g_{1,W^{\prime}}} 
(\tl{g_{2,W^{\prime}}})^{d_I}$ is the 
decomposition of $\tl{g}$ on it, 
we patch $\tl{E_{I,W}^{\circ}}$ and 
$\tl{E_{I,W^{\prime}}^{\circ}}$ by 
the morphism $(t,z) \longmapsto 
(\tl{g_{2,W^{\prime}}}(z)( \tl{g_{2,W}})^{-1}(z) 
\cdot t, z)$ defined over 
$W \cap W^{\prime}$. 
Now for $d \in \ZZ_{>0}$, let 
$\mu_d \simeq \ZZ/\ZZ d$ be the multiplicative 
group consisting of the $d$-roots 
in $\CC$. We denote by $\hat{\mu}$ the 
projective limit $\underset{d}{\varprojlim} 
\mu_d$ of the projective system 
$\{ \mu_i \}_{i \geq 1}$ with morphisms 
$\mu_{id} \longrightarrow \mu_i$ 
given by $t \longmapsto t^d$. Then the 
unramified Galois covering 
$\tl{E_I^{\circ}}$ of $E_I^{\circ}$ 
admits a natural $\mu_{d_I}$-action 
defined by assigning the automorphism 
$(t,z) \longmapsto (\zeta_{d_I} t, z)$ 
of $\tl{E_I^{\circ}}$ to the generator 
$\zeta_{d_I}:=\exp 
(2\pi\sqrt{-1}/d_I) \in \mu_{d_I}$. Namely 
the variety $\tl{E_I^{\circ}}$ is 
equipped with a good $\hat{\mu}$-action 
in the sense of \cite[Section 
2.4]{D-L-2}. Following the notations 
in \cite{D-L-2}, denote by 
$\M_{\CC}^{\hat{\mu}}$ the ring obtained 
from the Grothendieck ring 
$\KK_0^{\hat{\mu}}(\Var_{\CC})$ of 
varieties over $\CC$ with good 
$\hat{\mu}$-actions by inverting the 
Lefschetz motive $\LL\simeq \CC \in 
\KK_0^{\hat{\mu}}(\Var_{\CC})$. Recall 
that $\LL \in 
\KK_0^{\hat{\mu}}(\Var_{\CC})$ is endowed 
with the trivial action of 
$\hat{\mu}$.

\begin{definition}[\cite{M-T-4} and 
\cite{Raibaut}] We define the \emph{motivic 
Milnor fiber at 
infinity} $\SS_f^{\infty}$ of the 
polynomial map $f \colon \CC^n 
\longrightarrow \CC$ by
\begin{equation}\label{MMF}
\SS_f^{\infty} =\sum_{I \neq \emptyset} 
(1-\LL)^{|I| -1} 
[\tl{E_I^{\circ}}] \in 
\M_{\CC}^{\hat{\mu}}.
\end{equation}
\end{definition}

\begin{remark}
By Guibert-Loeser-Merle \cite[Theorem 3.9]{G-L-M}, 
the motivic Milnor fiber 
at infinity $\SS_f^{\infty}$ of $f$ does 
not depend on the compactification 
$X$ of $\CC^n$. This fact was informed to 
us by Sch{\"u}rmann (private 
communication) and Raibaut \cite{Raibaut}.
\end{remark}

As in \cite[Section 3.1.2 and 3.1.3]{D-L-2}, 
we denote by $\HSm$ the abelian 
category of Hodge structures with a 
quasi-unipotent endomorphism. Then, to 
the object $\psi_h(j_!Rf_!\CC_{\CC^n})\in 
\Dbc(\{\infty\})$ and the 
semisimple part of the monodromy 
automorphism acting on it, we can associate 
an element
\begin{equation}
[H_f^{\infty}] \in \KK_0(\HSm)
\end{equation}
as in \cite{D-L-1} and \cite{D-L-2}. 
Recall that the weight filtrations of 
$H^j \psi_h(j_!Rf_!\CC_{\CC^n})$ 
in the construction of $[H_f^{\infty}]$ 
are ``relative" monodromy filtrations. 
To describe the element 
$[H_f^{\infty}]\in \KK_0(\HSm)$ in terms of 
$\SS_f^{\infty}\in \M_{\CC}^{\hat{\mu}}$, let
\begin{equation}
\chi_h \colon \M_{\CC}^{\hat{\mu}} 
\longrightarrow \KK_0(\HSm)
\end{equation}
be the Hodge characteristic morphism 
defined in \cite{D-L-2} which 
associates to a variety $Z$ with a 
good $\mu_d$-action the Hodge structure
\begin{equation}
\chi_h ([Z])=\sum_{j \in \ZZ} (-1)^j 
[H_c^j(Z;\QQ)] \in \KK_0(\HSm)
\end{equation}
with the actions induced by the one 
$z \longmapsto \exp (2\pi\sqrt{-1}/d)z$ 
($z\in Z$) on $Z$. Then 
as in \cite[Theorem 4.4]{M-T-4}, by applying 
the proof of \cite[Theorem 4.2.1]{D-L-1} 
to our situation \eqref{eq:4-2} and 
\eqref{eq:4-5}, we obtain the following 
result.

\begin{theorem}\label{thm:7-6}
In the Grothendieck group $\KK_0(\HSm)$, we have 
the equality 
\begin{equation}
[H_f^{\infty}]=\chi_h(\SS_f^{\infty}).
\end{equation}\fin
\end{theorem}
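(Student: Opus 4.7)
The plan is to adapt the argument of Denef--Loeser \cite[Theorem 4.2.1]{D-L-1} to our non-proper setting by exploiting precisely the isomorphisms \eqref{eq:4-2} and \eqref{eq:4-5}, which reduce the computation of $[H_f^\infty]$ to a compactly supported nearby-cycle calculation on the normal crossing divisor $U=\Omega \cap Y$.

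First, I would apply the chain of isomorphisms \eqref{eq:4-2}--\eqref{eq:4-5}, which are compatible with the semisimple part of the monodromy, to rewrite
\begin{equation}
[H_f^\infty] = \sum_{j\in\ZZ}(-1)^j\bigl[H^j_c(U;\psi_{\tl g}(\CC_{\tl X}))\bigr] \in \KK_0(\HSm).
\end{equation}
The key point is that the right-hand side is now expressed purely in terms of the geometry of the NCD $U \subset \Omega$, on which we have full control, so Denef--Loeser's local computation becomes applicable. The complement $\tl X \setminus \Omega$ does not contribute because, by definition of $\Omega$, the sheaf $\iota_!\CC_{\CC^n}$ equals $\iota'_!\CC_\Omega$ in a neighborhood of $U$.

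Next, I would stratify $U$ as the disjoint union $U=\bigsqcup_{I\neq\emptyset} E_I^\circ$. By additivity of compactly supported cohomology classes in $\KK_0(\HSm)$, we obtain
\begin{equation}
[H_f^\infty] = \sum_{I\neq\emptyset}\sum_j(-1)^j \bigl[H^j_c(E_I^\circ;\psi_{\tl g}(\CC_{\tl X})|_{E_I^\circ})\bigr].
\end{equation}
Then on each local chart $W$ as in the construction preceding \eqref{eq:6-26}, the function $\tl g$ is, up to the unit $\tl g_{1,W}$, the monomial $\prod_{i\in I}\xi_i^{b_i}$. A standard local computation of nearby cycles of such a monomial along an NCD (see \cite{D-L-2}) shows that $\psi_{\tl g}(\CC_{\tl X})|_{E_I^\circ}$ is quasi-isomorphic, compatibly with its semisimple monodromy, to the pushforward along $\tl E_I^\circ \to E_I^\circ$ of a constant object tensored with $(\CC \to \CC^*)^{\otimes(|I|-1)}$; passing to Hodge-theoretic classes gives
\begin{equation}
\sum_j(-1)^j\bigl[H^j_c(E_I^\circ;\psi_{\tl g}(\CC_{\tl X})|_{E_I^\circ})\bigr] = (1-\LL)^{|I|-1}\,\chi_h([\tl E_I^\circ]).
\end{equation}
Summing over $I\neq\emptyset$ and comparing with the definition \eqref{MMF} of $\SS_f^\infty$ gives the identity $[H_f^\infty]=\chi_h(\SS_f^\infty)$.

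The main technical obstacle is the compatibility of weight filtrations: the construction of $[H_f^\infty]\in\KK_0(\HSm)$ uses the \emph{relative} monodromy filtration on $H^j\psi_h(j_!Rf_!\CC_{\CC^n})$ rather than the naive monodromy filtration, so one has to check that when one glues the local computations along the various $E_I^\circ$, the resulting class in the Grothendieck group does match $\chi_h(\SS_f^\infty)$, where $\chi_h$ is defined via ordinary compactly supported cohomology on the $\tl E_I^\circ$. This is exactly the point addressed by \cite[Theorem 4.2.1]{D-L-1} in the local case, and the proof transfers to our global situation thanks to the fact that $U$ is a genuine NCD inside the open set $\Omega$, so that no further correction from the ``horizontal'' boundary $\tl X \setminus \Omega$ enters the class in $\KK_0(\HSm)$.
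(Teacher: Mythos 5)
Your proposal is correct and follows essentially the same route as the paper: the paper states that Theorem~\ref{thm:7-6} follows ``by applying the proof of \cite[Theorem 4.2.1]{D-L-1} to our situation \eqref{eq:4-2} and \eqref{eq:4-5},'' and your argument is precisely a spelled-out version of that reduction---rewriting $[H_f^{\infty}]$ via \eqref{eq:4-2}--\eqref{eq:4-5} as a compactly supported nearby-cycle class on the NCD $U=\bigsqcup_{I\neq\emptyset}E_I^{\circ}$ and applying the local Denef--Loeser computation to produce the factors $(1-\LL)^{|I|-1}[\tl{E_I^{\circ}}]$. Your closing caveat about the relative versus absolute monodromy filtration is the right subtlety to flag, and it is handled in the paper exactly as in \cite[Section 3.1]{D-L-2}, by building the relative filtration into the definition of $[H_f^{\infty}]$.
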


By using Newton polyhedrons at 
infinity, we can rewrite 
Theorem \ref{thm:7-6} more explicitly 
as follows. Let $f\in \CC[x_1,\ldots,x_n]$ 
be a ``non-convenient" polynomial such that 
$\dim \Gamma_{\infty}(f)=n$. 
Assume that $f$ is non-degenerate at infinity. 
Now let us consider $\CC^n$ as a 
toric variety associated with the fan 
$\Sigma_0$ in $\RR^n$ formed by all the 
faces of the first quadrant 
$\RR_+^n \subset \RR^n$. 
Denote by $T \simeq (\CC^*)^n$ the 
open dense torus in it. Let $\Sigma_1$ 
be a subdivision of the dual fan 
of $\Gamma_{\infty}(f)$ which contains 
$\Sigma_0$ as its subfan. Then we can 
construct a smooth subdivision $\Sigma$ of 
$\Sigma_1$ without subdividing 
the cones in $\Sigma_0$ (see e.g. 
\cite[Lemma (2.6), Chapter II, page 
99]{Oka}). 
This implies that the 
toric variety $X_{\Sigma}$ associated with 
$\Sigma$ is a smooth compactification 
of $\CC^n$. Our construction of $X_{\Sigma}$ 
coincides with the one in Zaharia \cite{Zaharia}. 
Recall that $T$ acts on 
$X_{\Sigma}$ and the $T$-orbits are 
parametrized by the cones in $\Sigma$. 
For a cone $\sigma \in \Sigma$ denote 
by $T_{\sigma} \simeq (\CC^*)^{n-\dim 
\sigma}$ the corresponding $T$-orbit. 
We have also natural affine open 
subsets $\CC^n(\sigma) \simeq \CC^n$ of 
$X_{\Sigma}$ associated to 
$n$-dimensional cones $\sigma$ in $\Sigma$ 
as follows. Let $\sigma$ be an 
$n$-dimensional cone in $\Sigma$ and 
$\{w_1,\ldots, w_n\} \subset \ZZ^n$ the 
set of the (non-zero) 
primitive vectors on the edges of 
$\sigma$. Then by the smoothness of $\Sigma$ 
the semigroup ring $\CC [ \ZZ^n \cap \sigma ]$ 
is isomorphic to the polynomial ring 
$\CC [y_1, \ldots, y_n]$. 
This implies that the affine open 
subset $\CC^n(\sigma):= 
\Spec ( \CC [ \ZZ^n \cap \sigma ])$ 
of $X_{\Sigma}$ is isomorphic to $\CC^n_y$. 
Moreover, on $\CC^n(\sigma) \simeq 
\CC^n_y$ the function $f$ has the 
following form:
\begin{equation}
f(y)=\sum_{v \in \ZZ_+^n}
a_{v}y_1^{\langle w_1,v \rangle}\cdots y_n^{\langle 
w_n, v \rangle} =y_1^{b_1} \cdots y_n^{b_n} 
\times f_{\sigma}(y),
\end{equation}
where we set $f=\sum_{v \in \ZZ_+^n}a_{v}x^{v}$,
\begin{equation}
b_i=\min_{v\in \Gamma_{\infty}(f)} 
\langle w_i,v \rangle \leq 0 \qquad 
(i=1,2,\ldots,n)
\end{equation}
and $f_{\sigma}(y)$ is a polynomial on 
$\CC^n(\sigma) \simeq \CC^n_y$. In 
$\CC^n(\sigma) \simeq \CC^n_y$ 
the hypersurface $Z:= \overline{f^{-1}(0)} 
\subset X_{\Sigma}$ is explicitly 
written as $\{ y \in \CC^n(\sigma) \ | \ 
f_{\sigma}(y)=0 \}$. 
The variety $X_{\Sigma}$ is covered by 
such affine open 
subsets. Let $\tau$ be a 
$d$-dimensional face of the $n$-dimensional cone 
$\sigma \in \Sigma$. For simplicity, 
assume that $w_1,\ldots, w_d$ generate 
$\tau$. Then in the affine chart 
$\CC^n(\sigma) \simeq \CC^n_y$ the 
$T$-orbit $T_{\tau}$ associated to 
$\tau$ is explicitly defined by
\begin{equation*}
T_{\tau}=\{(y_1,\ldots,y_n)\in 
\CC^n(\sigma)\  |\ y_1=\cdots =y_d=0,\ 
y_{d+1},\ldots, y_n\neq 0\}\simeq (\CC^*)^{n-d}.
\end{equation*}
Hence we have
\begin{equation}
 X_{\Sigma}=\bigcup_{\dim \sigma =n}
\CC^n(\sigma)=\bigsqcup_{\tau 
\in\Sigma}T_{\tau}.
\end{equation}
Now $f$ extends to a meromorphic 
function on $X_{\Sigma}$, 
which may still have points of indeterminacy. 
For simplicity we denote this 
meromorphic extension also by $f$. 
From now on, we will eliminate 
its points of indeterminacy 
by blowing up $X_{\Sigma}$ 
(see \cite[Section 3]{M-T-2} 
and \cite[Section 3]{M-T-4} for 
the details). 
For a cone $\sigma$ in $\Sigma$ by 
taking a non-zero vector $u$ in the 
relative interior $\relint(\sigma)$ of 
$\sigma$ we define a face $\gamma(\sigma)$ 
of $\Gamma_{\infty}(f)$ by
\begin{equation}
\gamma(\sigma) =\left\{ v 
\in \Gamma_{\infty}(f) \ | \ \langle u ,v 
\rangle = \min_{w \in \Gamma_{\infty}(f)} 
\langle u,w \rangle \right\}.
\end{equation}
This face $\gamma(\sigma)$ does 
not depend on the choice of $u \in 
\relint(\sigma)$ and is called 
the supporting face of $\sigma$ in 
$\Gamma_{\infty}(f)$. Following \cite{L-S}, 
we say that a $T$-orbit 
$T_{\sigma}$ in $X_{\Sigma}$ 
(or a cone $\sigma \in \Sigma$) is at infinity 
if its supporting face $\gamma(\sigma) \prec 
\Gamma_{\infty}(f)$ is at infinity i.e. 
$0 \notin \gamma(\sigma)$. 
We can easily see that $f$ has poles on 
the union of $T$-orbits at infinity. 
Let $\rho_1, \rho_2, \ldots, \rho_m$ be 
the $1$-dimensional cones 
at infinity in $\Sigma$ 
and set $T_i=T_{\rho_i}$. We call the 
cones $\rho_i$ rays at infinity in $\Sigma$. 
Then $T_1,T_2,\ldots, T_m$ are the 
$(n-1)$-dimensional $T$-orbits at infinity 
in $X_{\Sigma}$. For any 
$i=1,2,\ldots,m$ the toric divisor 
$D_i:=\overline{T_i}$ is a smooth 
hypersurface in $X_{\Sigma}$ and the 
poles of $f$ are contained in $D_1 
\cup \cdots \cup D_m$. 
Let us denote the (unique non-zero) 
primitive vector in $\rho_i \cap 
\ZZ^n$ by $u_i$. Then the order $a_i>0$ 
of the pole of $f$ along $D_i$ 
is given by
\begin{equation}
a_i=-\min_{v\in \Gamma_{\infty}(f)} 
\langle u_i,v \rangle.
\end{equation}
Moreover by the non-convenience of $f$, there 
exist some cones $\sigma \in \Sigma$ such that 
$\sigma \notin \Sigma_0$ and $0 \in \gamma (\sigma )$ 
i.e. $\gamma (\sigma )$ is an atypical face of 
$\Gamma_{\infty}(f)$. For such $\sigma$ the 
function $f$ extends holomorphically to a 
neighborhood of $T_{\sigma} \subset 
X_{\Sigma} \setminus \CC^n$. For this 
reason we call them ``horizontal" 
$T$-orbits in $X_{\Sigma}$. 
Note also that by the 
non-degeneracy at infinity of $f$, for 
any non-empty subset $I \subset \{ 
1,2, \ldots, m \}$ 
the hypersurface $Z= \overline{f^{-1}(0)}$ 
in $X_{\Sigma}$ intersects $D_I:= 
\bigcap_{i \in I}D_i$ transversally 
(or the intersection is empty). At such intersection 
points, $f$ has indeterminacy. 
Now, in order to eliminate the 
indeterminacy of the meromorphic function 
$f$ on $X_{\Sigma}$, we first 
consider the blow-up $\pi_1 \colon 
X_{\Sigma}^{(1)} \longrightarrow X_{\Sigma}$ 
of $X_{\Sigma}$ along the 
$(n-2)$-dimensional smooth subvariety 
$D_1\cap Z$. Then the indeterminacy of 
the pull-back $f \circ \pi_1$ of 
$f$ to $X_{\Sigma}^{(1)}$ is 
improved. If $f \circ \pi_1$ still 
has points of indeterminacy on the 
intersection of the exceptional divisor 
$E_1$ of $\pi_1$ and the proper 
transform $Z^{(1)}$ of $Z$, we construct 
the blow-up $\pi_2 \colon 
X_{\Sigma}^{(2)} \longrightarrow 
X_{\Sigma}^{(1)}$ of $X_{\Sigma}^{(1)}$ 
along $E_1 \cap Z^{(1)}$. By repeating 
this procedure $a_1$ times, we obtain 
a tower of blow-ups
\begin{equation}
X_{\Sigma}^{(a_1)} 
\underset{\pi_{a_1}}{\longrightarrow}
\cdots \cdots
\underset{\pi_2}{\longrightarrow} X_{\Sigma}^{(1)}
\underset{\pi_1}{\longrightarrow} X_{\Sigma}.
\end{equation}
Then the pull-back of $f$ to $X_{\Sigma}^{(a_1)}$ 
has no indeterminacy 
over $T_1$. It also extends to a holomorphic function 
on (an open dense subset of) 
the exceptional divisor of the last blow-up 
$\pi_{a_1}$ whose monodromy at infinity is trivial. 
For this reason we call it 
a horizontal exceptional divisor. For the details 
see the figures in \cite[page 420]{M-T-2}. 
Next we apply this construction 
to the proper transforms of $D_2$ and $Z$ in 
$X_{\Sigma}^{(a_1)}$. Then we obtain 
also a tower of blow-ups
\begin{equation}
X_{\Sigma}^{(a_1)(a_2)} \longrightarrow 
\cdots \cdots \longrightarrow 
X_{\Sigma}^{(a_1)(1)} \longrightarrow 
X_{\Sigma}^{(a_1)}
\end{equation}
and the indeterminacy of the pull-back 
of $f$ to 
$X_{\Sigma}^{(a_1)(a_2)}$ is eliminated 
over $T_1 \sqcup T_2$. By applying 
the same construction to (the proper 
transforms of) $D_3, D_4,\ldots, D_m$, 
we finally obtain a birational morphism 
$\pi \colon \tl{X_{\Sigma}} 
\longrightarrow X_{\Sigma}$ such that 
$g:=f \circ \pi$ has no point of 
indeterminacy on the whole $\tl{X_{\Sigma}}$. 
Note that the smooth 
compactification $\tl{X_{\Sigma}}$ of $\CC^n$ 
thus obtained is not a toric 
variety any more. On $\tl{X_{\Sigma}}$ 
there are $m$ horizontal 
exceptional divisors. By eliminating the 
points of indeterminacy of the 
meromorphic extension of $f$ to $X_{\Sigma}$ 
we have constructed the commutative 
diagram:

\begin{equation}
\begin{CD}
\CC^n  @>{\iota}>> \tl{X_{\Sigma}}
\\
@V{f}VV   @VV{g}V
\\
\CC @>>{j}> \PP^1. 
\end{CD}
\end{equation}
Take a local coordinate $h$ of 
$\PP^1$ in a neighborhood of $\infty 
\in \PP^1$ such that 
$\infty=\{h=0\}$ and set $\tl{g}=h\circ g$, 
$Y=\tl{g}^{-1}(0)=g^{-1}(\infty) 
\subset \tl{X_{\Sigma}}$ and 
$\Omega=\Int(\iota(\CC^n) \sqcup Y)$ as before. 
For simplicity, let us set 
$\tl{g}=\frac{1}{f}$. Then the divisor 
$U=Y \cap \Omega$ in $\Omega$ 
contains not only the proper transforms 
$D_1^{\prime}, \ldots, D_m^{\prime}$ 
of $D_1, \ldots, D_m$ in $\tl{X_{\Sigma}}$ 
but also the exceptional divisors 
of the blow-up: $\tl{X_{\Sigma}} 
\longrightarrow X_{\Sigma}$. 
So the motivic Milnor fiber at infinity 
$\SS_f^{\infty}$ of $f \colon 
\CC^n \longrightarrow \CC$ defined by 
this compactification 
$\tl{X_{\Sigma}}$ of $\CC^n$ 
contains also unramified 
Galois coverings of some subsets of 
these exceptional 
divisors. However they 
are not necessary to compute 
the Hodge realization of $\SS_f^{\infty}$ 
as follows. 
For each non-empty subset 
$I \subset \{1,2,\ldots, m\}$, set 
$D_I= \bigcap_{i \in I} D_i$,
\begin{equation}
D_I^{\circ}=D_I \setminus \left\{ \( 
\bigcup_{i \notin I}D_i\) \cup 
\overline{f^{-1}(0)}\right\} \subset X_{\Sigma}
\end{equation}
and $d_I = {\rm gcd} (a_i)_{i \in I} >0$. Then the 
function $\tl{g}=\frac{1}{f}$ is 
regular on $D_I^{\circ}$ and we can 
decompose it as 
$\frac{1}{f}=\tl{g_{1}}(\tl{g_{2}})^{d_I}$ 
globally on a Zariski open 
neighborhood $W$ of $D_I^{\circ}$ in 
$X_{\Sigma}$, where $\tl{g_1}$ is a 
unit on $W$ and $\tl{g_2} \colon W 
\longrightarrow \CC$ is regular. 
Therefore we can construct an unramified 
Galois covering $\tl{D_I^{\circ}}$ 
of $D_I^{\circ}$ with a natural 
$\mu_{d_I}$-action as in \eqref{eq:6-26}. 
Let $[\tl{D_I^{\circ}}]$ be the element 
of the ring $\M_{\CC}^{\hat{\mu}}$ 
which corresponds to $\tl{D_I^{\circ}}$. 
Then as in \cite[Theorem 4.7]{M-T-4} we obtain the 
following result. 

\begin{theorem}\label{thm:7-7}
Assume that $\dim \Gamma_{\infty}(f)=n$ 
and $f$ is non-degenerate at infinity. 
Then we have 
the equality
\begin{equation}
\chi_h\(\SS_f^{\infty}\)= 
\dsum_{I \neq \emptyset}\chi_h
\( (1-\LL)^{ |I| -1} [\tl{D_I^{\circ}}]\)
\end{equation}
in the Grothendieck group $\KK_0(\HSm)$.
\fin
\end{theorem}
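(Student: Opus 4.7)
The plan is to mirror the proof of \cite[Theorem 4.7]{M-T-4}, adapting it to the non-convenient compactification $\tl{X_{\Sigma}}$ constructed above. Applying the definition \eqref{MMF} of $\SS_f^{\infty}$ to this compactification, we first need to enumerate the irreducible components $E_1,\ldots,E_k$ of the normal crossing divisor $U = Y \cap \Omega$ in $\Omega$. These split into two classes: (a) the proper transforms $D_1^{\prime},\ldots,D_m^{\prime}$ of the toric divisors $D_i = \overline{T_i}$ at infinity in $X_{\Sigma}$, along which $\tl{g}=1/f$ has multiplicities $a_i$; and (b) the exceptional divisors produced by the successive towers of blow-ups $\pi\colon \tl{X_{\Sigma}} \to X_{\Sigma}$ along (proper transforms of) $D_i \cap Z$. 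The sum over non-empty $I \subset \{1,\ldots,k\}$ then breaks into terms indexed by $I \subset \{D_i^{\prime}\}$ only, and terms involving at least one exceptional divisor.

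Next I would identify the contribution of class (a) with the right-hand side. On each affine chart $\CC^n(\sigma) \simeq \CC^n_y$ associated to an $n$-dimensional cone $\sigma \in \Sigma$ whose edges contain rays at infinity indexed by $I$, the local equation of $\tl{g} = 1/f$ is $y^{-b_1}\cdots y^{-b_n}/f_{\sigma}(y)$, and $f_{\sigma}$ is a unit along $D_I^{\circ}$ by the non-degeneracy at infinity of $f$. The decomposition $\tl{g} = \tl{g_1}(\tl{g_2})^{d_I}$ from the definition of $\tl{E_I^{\circ}}$ therefore matches the decomposition defining $\tl{D_I^{\circ}}$, and $d_I = \gcd(a_i)_{i\in I}$ coincides on both sides. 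One has to observe that the horizontal $T$-orbits $T_{\sigma}$ (where $\gamma(\sigma)$ is atypical and $0 \in \gamma(\sigma)$) lie outside $Y$ because $f$ extends holomorphically and non-trivially across them, so they do not contribute to $U$ at all and thus play no role in this identification.

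Then I would prove that the contribution of class (b) vanishes after applying $\chi_h$. Over each intersection stratum $D_I^{\circ} \cap Z$ (which is smooth by non-degeneracy), the tower of blow-ups described above produces a standard chain whose strata are, generically, $\CC^*$- and $\PP^1$-bundles over subvarieties of $D_I \cap Z$. The horizontal exceptional divisor at the end of each tower carries a trivial monodromy of $f$ (as noted in the construction), so the corresponding unramified Galois cover splits as a product. By a telescoping computation identical to the one in \cite[Theorem 4.7]{M-T-4} and \cite[Section 3]{M-T-2}, the alternating factors $(1-\LL)^{|I|-1}$ combine with the $\CC^*$-bundle structures along the chain to give zero in $\KK_0(\HSm)$ stratum by stratum, using $\chi_h([\CC^*]) = \LL - 1$ against the prefactor $1-\LL$.

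The main obstacle will be Step~3: carefully tracking the $\hat{\mu}$-equivariant structure on the multiple-intersection strata $\tl{E_I^{\circ}}$ that mix proper transforms $D_i^{\prime}$ with exceptional divisors, and verifying that the cancellation among the chains of exceptional divisors remains valid when the chains are fibered over lower-dimensional strata of the toric boundary. The non-convenient hypothesis introduces some additional toric strata (the horizontal $T_{\sigma}$ with $0 \in \gamma(\sigma)$) whose neighborhoods must be shown not to interfere with this cancellation; fortunately the argument of Step~2 already isolates them outside $Y$, so the cancellation analysis reduces to the tame-looking local picture near each $T_i$ and proceeds as in \cite{M-T-4}.
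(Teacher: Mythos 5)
The overall architecture of your argument (split $I$ into ``only proper transforms of toric divisors'' versus ``at least one exceptional divisor,'' match the first class with the right-hand side, kill the second class after $\chi_h$ by a telescoping computation as in \cite[Theorem~4.7]{M-T-4} and \cite[Section~3]{M-T-2}) is the same approach the paper intends. However, your Step~2 contains a genuine gap precisely at the point you describe as the main novelty of the non-convenient setting.

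You claim that, because the horizontal $T$-orbits $T_{\sigma}$ (those with $0 \in \gamma(\sigma)$) lie outside $Y$, they ``play no role'' and the strata $\tl{E_I^{\circ}}$ for $I$ consisting only of $D_1',\ldots,D_m'$ agree with $\tl{D_I^{\circ}}$. This identification is not automatic. The set $D_I^{\circ}$ is defined inside $X_{\Sigma}$ by removing only $\bigcup_{j\notin I}D_j$ and $\overline{f^{-1}(0)}$; it is \emph{not} cut down by the closures of horizontal divisors. On the other hand, $E_I^{\circ}$ lives inside the open set $\Omega = \Int\bigl(\iota(\CC^n)\sqcup Y\bigr)$, and a point $p\in D_I^{\circ}$ lying on the closure $\overline{T_{\rho}}$ of a horizontal $T$-divisor (a ray $\rho\notin\Sigma_0$ with $0\in\gamma(\rho)$) fails to belong to $\Omega$, because the component $\overline{T_{\rho}}$ of the boundary through $p$ is not in $Y$. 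In the non-convenient case the polar cone of $\Gamma_{\infty}(f)$ is strictly larger than $\RR^n_+$, so such horizontal rays $\rho\notin\Sigma_0$ and mixed cones $\tau\succ\rho$ containing a ray at infinity $\rho_i$ do occur; consequently $E_I^{\circ}\subsetneq D_I^{\circ}$ in general. Your argument that $T_{\sigma}\cap Y=\emptyset$ shows only that the horizontal orbits do not supply \emph{components} of $U$; it does not show that the pieces $D_I^{\circ}\setminus\Omega$ are negligible.

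What is missing, and what is genuinely new compared with the convenient case of \cite[Theorem~4.7]{M-T-4}, is an argument that the excess $\sum_I\chi_h\bigl((1-\LL)^{|I|-1}[\tl{D_I^{\circ}\setminus E_I^{\circ}}]\bigr)$ is compensated. One must show that the strata of $D_I$ lying in closures of horizontal divisors organize into a telescoping sum (a second such argument, parallel to the one you run for the exceptional divisors) whose $\chi_h$-image cancels, exploiting the torus-bundle structure of these strata and the $(1-\LL)$ prefactors. Until that is done, Step~2 and, consequently, the stratum-by-stratum bookkeeping in Step~3 are incomplete.
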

For a face at infinity 
$\gamma \prec 
\Gamma_{\infty}(f)$ of $\Gamma_{\infty}(f)$, 
by using the lattice $M_{\gamma}=\ZZ^n 
\cap \LL(\Delta_{\gamma}) 
\simeq \ZZ^{\dim \gamma+1}$ in 
$\LL(\Delta_{\gamma}) \simeq 
\RR^{\dim \gamma+1}$ 
we set $T_{\Delta_{\gamma}}:=\Spec 
(\CC[M_{\gamma}]) \simeq 
(\CC^*)^{\dim \gamma +1}$. Moreover let 
$\LL(\gamma)$ be the smallest affine 
linear subspace of $\RR^n$ containing $\gamma$ 
and for $v \in M_{\gamma}$ 
define their lattice heights $\height 
(v, \gamma) \in \ZZ$ from 
$\LL(\gamma)$ in $\LL(\Delta_{\gamma})$ 
so that we have $\height (0, 
\gamma)=d_{\gamma}>0$. Then to the group 
homomorphism $M_{\gamma} 
\longrightarrow \CC^*$ defined by $v 
\longmapsto \zeta_{d_{\gamma}}^{\height 
(v, \gamma)}$ we can naturally associate an 
element $\tau_{\gamma} \in 
T_{\Delta_{\gamma}}$. We define a Laurent 
polynomial $g_{\gamma}=\sum_{v \in 
M_{\gamma}}b_v x^v$ on $T_{\Delta_{\gamma}}$ by
\begin{equation}
b_v=\begin{cases}
a_v & (v \in \gamma),\\
-1 & (v=0),\\
\ 0 & (\text{otherwise}),
\end{cases}
\end{equation}
where $f=\sum_{v \in \ZZ^n_+} a_v x^v$. 
Then the Newton polytope $NP(g_{\gamma})$ 
of $g_{\gamma}$ is $\Delta_{\gamma}$, 
$\supp g_{\gamma} 
\subset \{ 0\} \sqcup \gamma$ and the 
hypersurface $Z_{\Delta_{\gamma}}^*=
\{ x \in T_{\Delta_{\gamma}}\ |\ 
g_{\gamma}(x)=0\}$ is non-degenerate 
(see \cite[Section 4]{M-T-4}).  
Since $Z_{\Delta_{\gamma}}^* 
\subset T_{\Delta_{\gamma}}$ 
is invariant by 
the multiplication $l_{\tau_{\gamma}} 
\colon  T_{\Delta_{\gamma}} \simto 
T_{\Delta_{\gamma}}$ by $\tau_{\gamma}$, 
$Z_{\Delta_{\gamma}}^*$ admits an 
action of $\mu_{d_{\gamma}}$. We thus 
obtain an element 
$[Z_{\Delta_{\gamma}}^*]$ of 
$\M_{\CC}^{\hat{\mu}}$. 
For a face at infinity $\gamma 
\prec \Gamma_{\infty}(f)$ let $s_{\gamma} >0$ 
be the dimension of the minimal coordinate subspace 
of $\RR^n$ containing $\gamma$ and set $m_{\gamma}= 
s_{\gamma}-\dim \gamma -1 \geq 0$. 
Finally, for $\lambda \in \CC$ and 
an element $H \in \KK_0(\HSm)$ denote 
by $H_{\lambda} \in \KK_0(\HSm)$ 
the eigenvalue $\lambda$-part of 
$H$. Then by applying the proof of 
\cite[Theorem 5.7 (i)]{M-T-4} to the 
geometric situation in Proposition \ref{ADD-2}, 
we obtain the following result. 

\begin{theorem}\label{thm:7-12}
Assume that $\dim \Gamma_{\infty}(f)=n$ 
and $f$ is non-degenerate at infinity. Then 
for any $\lambda \notin A_f$ we have the equality 
\begin{equation}
[H_f^{\infty}]_{\lambda} 
=\chi_h(\SS_f^{\infty})_{\lambda}=\sum_{\gamma} 
\chi_h((1-\LL)^{m_{\gamma}} \cdot 
[Z_{\Delta_{\gamma}}^*])_{\lambda}
\end{equation}
in $\KK_0(\HSm)$, 
where in the sum $\sum_{\gamma}$ 
the face $\gamma$ of $\Gamma_{\infty}(f)$ 
ranges through the admissible ones at infinity. 
\end{theorem}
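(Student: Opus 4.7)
The first equality $[H_f^{\infty}]_{\lambda} = \chi_h(\SS_f^{\infty})_{\lambda}$ is immediate from Theorem \ref{thm:7-6} by passing to the generalized $\lambda$-eigenspace. The work lies in the second equality, which I would obtain by reorganizing the decomposition of $\chi_h(\SS_f^{\infty})$ provided by Theorem \ref{thm:7-7} according to the supporting face $\gamma := \gamma(\sigma_I) \prec \Gamma_{\infty}(f)$ of the cone $\sigma_I = \sum_{i \in I} \RR_+ u_i \in \Sigma$ attached to each non-empty $I \subset \{1, \ldots, m\}$. Since the $u_i$ are primitive generators of rays at infinity, each such $\gamma$ is automatically a face at infinity. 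The proof then splits in two parts: (i) identifying the sum of admissible contributions with the claimed right-hand side, and (ii) showing that the non-admissible contributions produce only eigenvalues in $A_f$ and therefore vanish on taking the $\lambda$-part for $\lambda \notin A_f$.

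For (i), fix an admissible face $\gamma$ at infinity. Proposition \ref{ADD-2} gives the explicit structure of the dual cone
\[
\sigma(\gamma) = \Bigl( \sum_{i \notin S} \RR_+ \overrightarrow{e_i} \Bigr) + \Bigl( \sum_{j=1}^{k} \RR_+ \overrightarrow{f_j} \Bigr),
\]
where $\RR^S$ is the minimal coordinate subspace containing $\gamma$ and the $\overrightarrow{f_j} \in \ZZ^n \setminus \RR^n_+$. The subdivision of $\sigma(\gamma)$ by the cones $\sigma_I \in \Sigma$ with $\gamma(\sigma_I) = \gamma$ induces a decomposition of the corresponding $\chi_h([\tl{D_I^{\circ}}])$-terms which, by the additivity and multiplicativity in $\M_{\CC}^{\hat\mu}$ together with an inclusion-exclusion across the refinement, collapses exactly as in the proof of \cite[Theorem 5.7 (i)]{M-T-4} to the single term $\chi_h\bigl( (1-\LL)^{m_\gamma} [Z_{\Delta_\gamma}^*] \bigr)$: the $\overrightarrow{e_i}$-directions with $i \notin S$ produce the $(1-\LL)^{m_\gamma}$ factor (with $m_\gamma = s_\gamma - \dim \gamma - 1$), while the horizontal directions $\overrightarrow{f_j}$ telescope to yield the non-degenerate hypersurface $Z_{\Delta_\gamma}^* \subset T_{\Delta_\gamma}$ equipped with its canonical $\mu_{d_\gamma}$-action by translation by $\tau_\gamma$.

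The hard part is (ii). If $\gamma$ is a non-admissible face at infinity, it is by definition contained in an atypical face $\tau \prec \Gamma_{\infty}(f)$. In the local normal form $\frac{1}{f} = \tl{g_1}(\tl{g_2})^{d_I}$ used before Theorem \ref{thm:7-7} to construct $\tl{D_I^{\circ}} \to D_I^{\circ}$, the integer $d_I = \gcd(a_i)_{i \in I}$ is governed by the lattice structure of $\sigma(\gamma) \subset \sigma(\tau)$; a direct lattice-theoretic computation using that $u_\gamma$ extends to an element of the dual lattice of $M_\tau$ and that the $a_i = -\min_{v \in \Gamma_{\infty}(f)} \langle u_i, v \rangle$ are measured against a face containing $\gamma \subset \tau$ shows that $d_I$ divides $d_\gamma$. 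Consequently the $\mu_{d_I}$-action on $\tl{D_I^{\circ}}$ produces only eigenvalues $\lambda$ with $\lambda^{d_\gamma} = 1$, hence eigenvalues in $A_f$ by Definition \ref{AEV}. Taking the $\lambda$-part for $\lambda \notin A_f$ therefore annihilates all such contributions, leaving exactly the admissible ones and yielding the identity stated in the theorem. The anticipated main obstacle is precisely this lattice compatibility between $d_I$ and $d_\gamma$ in the non-admissible case: it is the new ingredient that has no counterpart in the tame/convenient setting of \cite{M-T-4}, where every face at infinity is admissible and the sum reduces at once to the formula above.
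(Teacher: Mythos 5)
Your part~(i) matches the paper's intended route: Theorem~\ref{thm:7-6} handles the first equality, Theorem~\ref{thm:7-7} gives the $D_I^{\circ}$-decomposition, and Proposition~\ref{ADD-2} is precisely what lets the collapse argument of \cite[Theorem 5.7\,(i)]{M-T-4} run for admissible faces, with the $\overrightarrow{e_i}$-directions producing the $(1-\LL)^{m_{\gamma}}$ factor. That part is fine.

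The gap is in part~(ii), specifically the assertion that ``a direct lattice-theoretic computation \dots shows that $d_I$ divides $d_{\gamma}$.'' This is false, and the failure is already visible in two variables. Take $\Gamma_{\infty}(f)$ to be the convex hull of $(0,0)$, $(3,0)$, $(3,2)$; then $[(0,0),(3,2)]$ is an atypical face and the vertex $\gamma=\{(3,2)\}$ is a non-admissible face at infinity with $d_{\gamma}=1$ (since $(3,2)$ is a primitive vector). However, the ray through $(0,-1)$ lies in $\sigma(\gamma)$, is a ray at infinity with supporting face $\gamma$, and has pole order $a=-\min_{v\in\Gamma_{\infty}(f)}\langle(0,-1),v\rangle=2$. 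So for $I=\{(0,-1)\}$ one gets $d_I=2$, which does not divide $d_{\gamma}=1$. More generally, for a vertex $\gamma=\{p\}$ at infinity one has $d_{\gamma}=\gcd(p)$, while the pole orders $a_i=-\langle u_i,p\rangle$ of rays supported on $\gamma$ can be arbitrary positive integers; there is no divisibility in the direction you need (if anything one tends to get $d_{\gamma}\mid a_i$, which does not help). Consequently the $\mu_{d_I}$-action on $\tl{D_I^{\circ}}$ genuinely produces eigenvalues that are not $d_{\gamma}$-th roots of unity and hence not automatically in $A_f$. The reason the non-admissible contributions nevertheless disappear from $\chi_h(\SS_f^{\infty})_{\lambda}$ for $\lambda\notin A_f$ is not an eigenvalue restriction at the level of the individual cover but a cancellation at the level of the Hodge characteristic: the extraneous $\mu_{d_I}$-isotypic pieces of $H_c^{\bullet}(\tl{D_I^{\circ}})$ cancel across cohomological degrees (a torus-type Euler-characteristic cancellation, exactly the mechanism exploited in the argument you cite from \cite[Theorem 5.7\,(i)]{M-T-4}). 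In the example above, $D_{\{(0,-1)\}}^{\circ}\simeq\CC^*$ and one checks directly that the $(-1)$-part of $\chi_h([\tl{D_{\{(0,-1)\}}^{\circ}}])$ vanishes even though the $\mu_2$-action is nontrivial. So the statement you are trying to prove in~(ii) is correct, but the justification you propose is not; you need to run the collapse/cancellation argument and observe that what survives from a non-admissible $\gamma$ is controlled by $\mu_{d_{\gamma}}$, rather than trying to constrain each $d_I$ individually.
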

\begin{proof}
The proof is similar to that of 
\cite[Theorem 5.7 (i)]{M-T-4}. By Proposition \ref{ADD-2} 
the argument at the end of the proof of 
\cite[Theorem 5.7 (i)]{M-T-4} holds for admissible 
faces at infinity of $\Gamma_{\infty}(f)$. 
But it does not hold for non-admissible ones 
by the presence of horizontal $T$-orbits in 
$X_{\Sigma}$. Hence it suffices to avoid 
atypical eigenvalues $\lambda \in A_f$. 
\end{proof}

\section{Main results}\label{sec:5}

In this section, we consider 
non-convenient polynomials 
$f: \CC^n \longrightarrow \CC$ 
such that $\dim \Gamma_{\infty}(f)=n$. 
For $\lambda \in \CC$ and $j \in \ZZ$ let 
$H^{j}(f^{-1}(R);\CC)_{\lambda} \subset 
H^{j}(f^{-1}(R);\CC)$ be the 
generalized eigenspace for the eigenvalue 
$\lambda$ of the monodromy at infinity 
$\Phi_{j}^{\infty} \colon 
H^{j}(f^{-1}(R);\CC) \simto H^j (f^{-1}(R);\CC)$ 
($R \gg 0$). Denote by $\Phi_{j, \lambda}^{\infty}$ 
the restriction of $\Phi_{j}^{\infty}$ to 
$H^{j}(f^{-1}(R);\CC)_{\lambda}$. Assuming also 
that $f$ is non-degenerate at infinity, for 
non-atypical eigenvalues $\lambda \notin A_f$ 
of $f$ we will prove the concentration 
\begin{equation}
H^{j}(f^{-1}(R);\CC)_{\lambda} \simeq 0 \qquad 
(j \not= n-1) 
\end{equation}
for the $\lambda$-parts $H^{j}(f^{-1}(R);\CC)_{\lambda}$ 
of the cohomology groups of the generic 
fiber $f^{-1}(R)$ ($R \gg 0$) of $f$. 
This implies that the Jordan normal forms of 
the $\lambda$-parts $\Phi_{j, \lambda}^{\infty}$ 
of the monodromies at infinity of $f$ can be 
completely determined by $\Gamma_{\infty}(f)$ 
as in \cite[Section 5]{M-T-4}. For this purpose 
we first consider Laurent polynomials on 
$T=( \CC^*)^n$.  Let $f^{\prime} \in 
\CC [x_1^{\pm 1}, \ldots, x_n^{\pm 1}]$ be a 
Laurent polynomial on 
$T=( \CC^*)^n$.  We define its Newton polytope 
$NP(f^{\prime}) \subset \RR^n$ as usual and let 
$\Gamma_{\infty}(f^{\prime}) \subset \RR^n$ 
be the convex hull of $\{ 0 \} \cup NP(f^{\prime})$ 
in $\RR^n$. We say that a face $\gamma 
\prec \Gamma_{\infty}(f^{\prime})$ 
is at infinity if $0 \notin \gamma$. 
By using faces at infinity of 
$\Gamma_{\infty}(f^{\prime})$ we define also 
the non-degeneracy at infinity 
of $f^{\prime}$ as in Definition \ref{dfn:3-3}. 

\begin{definition} 
Assume that $\dim \Gamma_{\infty}(f^{\prime})=n$. 
Then we say that a face $\gamma \prec 
\Gamma_{\infty}(f^{\prime})$ 
is atypical if $0 \in \gamma$. Moreover a face 
at infinity $\gamma \prec 
\Gamma_{\infty}(f^{\prime})$ is called 
admissible if it is not contained in 
any atypical one. 
\end{definition}
As in Definition \ref{AEV}, by using non-admissible 
faces at infinity of $\Gamma_{\infty}(f^{\prime})$ 
we define the subset $A_{f^{\prime}} \subset 
\CC$ of the atypical eigenvalues of $f^{\prime}$ 
such that $1 \in A_{f^{\prime}}$. 
Finally let us recall the following result of 
Libgober-Sperber \cite{L-S} 
on the monodromies at infinity 
$\Psi_j^{\infty} :  H^{j}( (f^{\prime})^{-1}(R);\CC) 
\simto H^{j}( (f^{\prime})^{-1}(R);\CC)$ ($R \gg 0$) 
of $f^{\prime} : T=(\CC^*)^n \longrightarrow \CC$. 
We define the monodromy zeta 
function at infinity 
$\zeta_{f^{\prime}}^{\infty}(t) \in \CC ((t))$ 
of $f^{\prime}$ by 
\begin{equation}
\zeta_{f^{\prime}}^{\infty}(t)= \prod_{j=0}^{n-1} 
\det ( \id -t \Psi_{j}^{\infty})^{(-1)^j} 
\in \CC ((t)). 
\end{equation}
For a face at infinity 
$\gamma \prec \Gamma_{\infty}(f^{\prime})$ let  
$\LL ( \gamma ) \simeq \RR^{\dim \gamma}$ be 
the minimal affine subspace of $\RR^n$ 
containing $\gamma$. 

\begin{proposition}\label{LST} 
(Libgober-Sperber \cite{L-S}) 
Assume that $\dim \Gamma_{\infty}(f^{\prime})=n$ 
and $f^{\prime}$ is non-degenerate at infinity. 
Then we have 
\begin{equation}
\zeta_{f^{\prime}}^{\infty}(t)= \prod_{\gamma} 
(1 -t^{d_{\gamma}}
)^{(-1)^{n-1} \Vol_{\ZZ}( \gamma )} 
\in \CC ((t)), 
\end{equation}
where in the product $\prod_{\gamma}$ the face 
$\gamma \prec \Gamma_{\infty}(f^{\prime})$ 
ranges through those 
at infinity such that 
$\dim \gamma =n-1$ and 
$\Vol_{\ZZ}( \gamma ) \in \ZZ_{>0}$ 
is the normalized $(n-1)$-dimensional 
volume of $\gamma$ with respect to the 
lattice $\LL ( \gamma ) \cap \ZZ^n \simeq 
\ZZ^{n-1}$. 
\fin
\end{proposition}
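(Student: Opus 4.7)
The plan is to compactify $T = (\CC^*)^n$ toroidally, resolve the indeterminacy of $f^{\prime}$ at the divisor at infinity, and apply A'Campo's formula
\begin{equation*}
\zeta_{f^{\prime}}^{\infty}(t) = \prod_{i}(1 - t^{m_i})^{\chi(E_i^{\circ})}
\end{equation*}
for the monodromy zeta function of the resulting proper holomorphic extension, summed over the irreducible components $E_i$ of the fiber over $\infty$ with their multiplicities $m_i$.

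Concretely, take a smooth projective refinement $\Sigma$ of the dual fan of $\Gamma_{\infty}(f^{\prime})$ in $\RR^n$. For each ray $\rho$ of $\Sigma$ with primitive generator $u_{\rho}$, set $a_{\rho} = -\min_{v \in \Gamma_{\infty}(f^{\prime})}\langle u_{\rho}, v\rangle \geq 0$; this equals the pole order of $f^{\prime}$ along the toric divisor $D_{\rho} \subset X_{\Sigma}$, and $\rho$ is at infinity if and only if $a_{\rho} > 0$, equivalently its supporting face $\gamma(\rho) \prec \Gamma_{\infty}(f^{\prime})$ does not contain $0$. Run the iterated blow-up procedure from Section \ref{sec:3} on $X_{\Sigma}$ along the intersections of the $D_{\rho}$ (for $\rho$ at infinity) with $\overline{(f^{\prime})^{-1}(0)}$ to obtain a proper extension $g \colon \tl{X_{\Sigma}} \to \PP^1$ whose fiber $g^{-1}(\infty)$ has simple normal crossings.

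The components of $g^{-1}(\infty)$ split into the proper transforms $D_{\rho}^{\prime}$ of toric divisors at infinity (with multiplicity $a_{\rho}$) and horizontal exceptional divisors. The open stratum of each horizontal exceptional divisor fibers over its center with $\CC^*$-fibers, so its Euler characteristic vanishes and it contributes trivially to $\zeta_{f^{\prime}}^{\infty}(t)$. For $D_{\rho}^{\prime}$ the open stratum is $T_{\rho} \setminus Z_{\gamma}^{*}$, where $T_{\rho} \simeq (\CC^*)^{n-1}$ and $Z_{\gamma}^{*}$ is the smooth hypersurface cut out by the face Laurent polynomial $f_{\gamma(\rho)}^{\prime}$ (smoothness coming from non-degeneracy at infinity). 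If $\dim \gamma(\rho) < n-1$, then $f_{\gamma(\rho)}^{\prime}$ factors through a quotient torus and both $T_{\rho}$ and $Z_{\gamma}^{*}$ carry a free $\CC^*$-factor, hence $\chi(T_{\rho} \setminus Z_{\gamma}^{*}) = 0$. If $\dim \gamma(\rho) = n-1$, then $\chi(T_{\rho}) = 0$ and the Bernstein--Khovanskii--Kushnirenko Euler characteristic formula gives $\chi(Z_{\gamma}^{*}) = (-1)^{n-2}\Vol_{\ZZ}(\gamma(\rho))$; a direct computation with the inner conormal identifies $u_{\rho} = -u_{\gamma(\rho)}$ and hence $a_{\rho} = d_{\gamma(\rho)}$. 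Assembling these contributions yields the stated product, since $-(-1)^{n-2} = (-1)^{n-1}$.

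The main obstacle I expect is justifying that the horizontal exceptional divisors contribute trivially. This is the heart of the Varchenko-type computation: one must work locally along each tower of blow-ups above a fixed component of $D_{\rho} \cap \overline{(f^{\prime})^{-1}(0)}$ and verify that every new exceptional divisor, cleaned of its intersections with its neighbors, is a $\CC^*$-bundle over a smaller stratum. The remaining bookkeeping is routine: the dimension-drop argument handles faces of dimension less than $n-1$, Kouchnirenko's formula handles the facet case, and the hypothesis $\dim \Gamma_{\infty}(f^{\prime}) = n$ ensures that the dual fan and all the associated primitive covectors live in $\RR^n$.
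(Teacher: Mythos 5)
The paper cites this result to Libgober--Sperber \cite{L-S} without reproducing a proof, so there is no internal argument to compare against; your reconstruction via a toric compactification along a smooth refinement of the dual fan of $\Gamma_{\infty}(f^{\prime})$, iterated blow-up to resolve indeterminacy, and an A'Campo-type product over the components of $g^{-1}(\infty)$ is essentially the route taken in Matsui--Takeuchi \cite{M-T-2}, which the paper itself relies on, and it is correct. The identifications you make are the right ones: $a_{\rho}=d_{\gamma(\rho)}$ via $u_{\rho}=-u_{\gamma(\rho)}$ when $\gamma(\rho)$ is a facet; the open stratum of $D_{\rho}^{\prime}$ is $T_{\rho}\setminus Z_{\gamma}^{*}$ since the first blow-up already separates $D_{\rho}^{\prime}$ from the proper transform of $\overline{(f^{\prime})^{-1}(0)}$; rays of the smooth subdivision that do not lie on an edge of the dual fan have $\dim\gamma(\rho)<n-1$ and drop out by the free torus factor; and Kouchnirenko's formula supplies $\chi(Z_{\gamma}^{*})=(-1)^{n-2}\Vol_{\ZZ}(\gamma)$ in the facet case.

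Two remarks on the step you flag. First, a terminology clash with the paper: in Section \ref{sec:3} only the \emph{last} exceptional divisor of each tower over $D_{\rho}\cap Z$ is called ``horizontal''; on it $g$ is finite-valued, so it is not a component of $g^{-1}(\infty)$ and contributes nothing for trivial reasons. The divisors that actually need the Euler-characteristic argument are the \emph{intermediate} exceptional divisors $E_{1},\ldots,E_{a_{\rho}-1}$, which do lie in $g^{-1}(\infty)$. Second, for these the $\chi=0$ claim is correct but needs the local bookkeeping: each $E_{k}$ is a $\PP^{1}$-bundle over (a space mapping isomorphically to) $D_{\rho}\cap Z$, and after the remaining blow-ups its open stratum is this bundle restricted over $T_{\rho}\cap Z=Z_{\gamma}^{*}$ with two \emph{disjoint} sections removed (the proper transform of the previous divisor in the tower, and of $Z$ or the next exceptional divisor), hence a $\CC^{*}$-bundle over $Z_{\gamma}^{*}$ with vanishing Euler characteristic. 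With that local computation spelled out, your argument is a complete proof of the proposition.
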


\begin{proposition}\label{ABC} 
Let $f^{\prime} \in 
\CC [x_1^{\pm 1}, \ldots, x_n^{\pm 1}]$ be a 
Laurent polynomial on 
$T=( \CC^*)^n$ such that 
$\dim \Gamma_{\infty}(f^{\prime})=n$. Assume 
that $f^{\prime}$ is non-degenerate at infinity. For 
$\lambda \in \CC$ and $j \in \ZZ$ denote the 
generalized eigenspace for the eigenvalue 
$\lambda$ of its monodromy at infinity 
$\Psi_j^{\infty} :  H^{j}( (f^{\prime})^{-1}(R);\CC) 
\simto H^{j}( (f^{\prime})^{-1}(R);\CC)$ ($R \gg 0$) 
by $H^{j}( (f^{\prime})^{-1}(R);\CC)_{\lambda} \subset 
H^{j}( (f^{\prime})^{-1}(R);\CC)$. 
Then for any $\lambda \notin A_{f^{\prime}}$ 
we have the concentration 
\begin{equation}
H^{j}( (f^{\prime})^{-1}(R);\CC)_{\lambda} 
\simeq 0 \qquad (j \not= n-1) 
\end{equation}
for the generic 
fiber $(f^{\prime})^{-1}(R) \subset T$ 
($R \gg 0$) of $f^{\prime}$. If 
$\lambda \notin A_{f^{\prime}}$ 
satisfies the condition 
$H^{n-1}( (f^{\prime})^{-1}(R);\CC)_{\lambda} 
\not= 0$, then there exists a facet at infinity 
$\gamma \prec 
\Gamma_{\infty}(f^{\prime})$ such that 
$\lambda^{d_{\gamma}} = 1$. 
Moreover for such $\lambda$ the relative 
monodromy filtration of 
$H^{n-1}( (f^{\prime})^{-1}(R);\CC)_{\lambda}$ 
($R \gg 0$) coincides 
with the absolute one (up to some shift).
\end{proposition}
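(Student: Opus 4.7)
The plan is to adapt the Sabbah-style argument underlying Theorem \ref{CONI} (a refinement of \cite[Theorem 13.1]{Sabbah-2}) to the torus setting, where the combinatorial picture is considerably cleaner because $T=(\CC^*)^n$ carries no coordinate hyperplanes. First I would choose a smooth refinement $\Sigma$ of the dual fan of $\Gamma_\infty(f')$, form the smooth toric compactification $X_\Sigma$ of $T$, and eliminate the indeterminacy of the meromorphic extension of $f'$ by the tower of blow-ups along $D_i\cap\overline{(f')^{-1}(0)}$ described in Section \ref{sec:3}. This produces a smooth compactification $\tl{X_\Sigma}$ of $T$ with a proper extension $g:\tl{X_\Sigma}\to\PP^1$ of $f'$ and the usual reduction $H^j((f')^{-1}(R);\CC)\simeq H^j(Y;\psi_{\tl{g}}(\iota_!\CC_T))$ for $Y=g^{-1}(\infty)$, via the isomorphisms \eqref{eq:4-2}--\eqref{eq:4-5}.

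For the concentration statement, I would decompose $\psi_{\tl{g}}(\iota_!\CC_T)$ according to the generalized eigenspaces of the monodromy. On each stratum $D_I^\circ$ of the normal crossing divisor $Y$, the nearby cycle sheaf restricts to an unramified $\mu_{d_I}$-covering (with $d_I=\gcd(a_i)_{i\in I}$), so its $\lambda$-eigenspace is nonzero only if $\lambda^{d_I}=1$. The hypothesis $\lambda\notin A_{f'}$ is tailored to rule out exactly those strata associated to non-admissible faces at infinity of $\Gamma_\infty(f')$ --- in particular, any stratum whose supporting face meets a face through the origin $0$, which is where the horizontal $T$-orbits and horizontal exceptional divisors intervene. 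The surviving strata are parametrized by admissible faces at infinity, and on each of them the Milnor-fibration piece is cut out by a non-degenerate Laurent polynomial on an algebraic torus; a Danilov--Khovanskii-type vanishing concentrates each contribution in degree $n-1$, and a spectral sequence in the style of the proof of Theorem \ref{CONI} assembles these into the global vanishing $H^j((f')^{-1}(R);\CC)_\lambda=0$ for $j\neq n-1$.

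The remaining two assertions follow quickly. With only $H^{n-1}(\cdot)_\lambda$ possibly nonzero, the $\lambda$-contribution to the monodromy zeta function at infinity becomes $\det(\id-t\,\Psi^\infty_{n-1,\lambda})^{(-1)^{n-1}}$; comparing with the Libgober--Sperber product $\prod_\gamma(1-t^{d_\gamma})^{(-1)^{n-1}\Vol_\ZZ(\gamma)}$ of Proposition \ref{LST} forces $\lambda^{d_\gamma}=1$ for some facet $\gamma$ at infinity whenever the $\lambda$-part is nontrivial. For the coincidence of the relative and absolute monodromy filtrations, I would observe that only admissible strata contribute to the $\lambda$-part, and on these the mixed Hodge module $\psi_{\tl{g}}(\iota_!\CC_T)$ is, after a Tate twist, pure of the expected weight; the \emph{relative} correction that normally distinguishes the two filtrations arises precisely from horizontal contributions, which have already been eliminated by the choice $\lambda\notin A_{f'}$.

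The hard part will be making the second step rigorous: identifying exactly which strata of $Y$ (and which covering data along them) survive after imposing $\lambda\notin A_{f'}$, and verifying that horizontal $T$-orbits and horizontal exceptional divisors contribute nothing to the $\lambda$-part. Propositions \ref{ADD-1} and \ref{ADD-2} provide the essential combinatorial input, translating admissibility into an explicit cone description of $\sigma(\gamma)$ in the dual fan of $\Gamma_\infty(f')$; this cone description is the torus-case analogue of the delicate ingredient of Sabbah's argument that the present proposition requires one to refine.
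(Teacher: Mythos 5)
Your proposal diverges from the paper's actual argument at the crucial step, and the route you sketch for the concentration has a genuine gap.

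The paper does not prove the concentration by analyzing the strata of $Y=g^{-1}(\infty)$ and "assembling" local vanishings via a spectral sequence. Instead it argues by induction on $n$ and by a comparison of compactly supported and ordinary cohomology. Concretely: setting $K'=(g')^{-1}(\CC)$ and $D'=K'\setminus T$ (the \emph{horizontal} part, i.e.\ the horizontal $T$-orbits together with the horizontal exceptional divisors, not the divisor at infinity $Y$), one applies $R\kappa'_!=R\kappa'_*$ to the triangle $(i_T)_!\CC_T\to R(i_T)_*\CC_T\to (i_{D'})_*i_{D'}^{-1}R(i_T)_*\CC_T\to{+1}$ to get $R(f')_!\CC_T\to R(f')_*\CC_T\to R(\kappa'|_{D'})_*i_{D'}^{-1}R(i_T)_*\CC_T\to{+1}$. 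The inductive hypothesis together with Proposition \ref{LST} shows that for $\lambda\notin A_{f'}$ the third term has vanishing $\lambda$-part in $\psi_h$, hence $\psi_{h,\lambda}(j_!R(f')_!\CC_T)\simeq\psi_{h,\lambda}(j_!R(f')_*\CC_T)$, i.e.\ $H^j_c((f')^{-1}(R);\CC)_\lambda\simeq H^j((f')^{-1}(R);\CC)_\lambda$. Affineness of the fiber then kills the left side for $j<n-1$ and the right side for $j>n-1$; the concentration drops out with no Danilov--Khovanskii input and no spectral sequence. You omit both the induction on $n$ (which is what controls the monodromies of $\kappa'$ restricted to the lower-dimensional horizontal tori) and, more seriously, the $H^j_c\leftrightarrow H^j$ comparison and the use of Artin vanishing, which are the engine of the whole argument.

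The step you propose in their place does not hold. The strata $D_I^\circ\subset Y$ have dimensions ranging over $0,\ldots,n-1$, and the unramified $\mu_{d_I}$-covers $\tl{D_I^\circ}$ are typically tori or hypersurface complements whose cohomology is spread over many degrees, not concentrated in a single one. There is no ``Danilov--Khovanskii-type vanishing that concentrates each contribution in degree $n-1$'': the identities of Theorems \ref{thm:7-6}, \ref{thm:7-7}, \ref{thm:7-12} are Grothendieck-group (Euler-characteristic) identities in $\KK_0(\HSm)$, not cohomological vanishings level by level, and the $(1-\LL)^{|I|-1}$ factors in \eqref{MMF} make the additivity work only at that coarse level. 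A spectral sequence over the strata of $Y$ would have nontrivial entries in many bidegrees and no a priori degeneration. Your second assertion (reading off $\lambda^{d_\gamma}=1$ from Proposition \ref{LST} once the concentration is known) and your heuristic for the filtration statement are in the spirit of the paper's appeals to \cite{L-S} and to Sabbah \cite[Theorem 13.1]{Sabbah-2}, but both rely on the concentration being established first, and your route to that concentration does not go through.
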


\begin{proof}
We will prove the proposition by induction on $n$. 
If $n=1$ the assertion is obvious. Assume that we 
already proved it for the lower dimensions 
$1,2, \ldots, n-1$. Let $\Sigma_1^{\prime}$ 
be the dual fan of $\Gamma_{\infty}(f^{\prime})$ 
in $\RR^n$ and $\Sigma^{\prime}$ its smooth 
subdivision. Then the toric variety 
$X_{\Sigma^{\prime}}$ associated to $\Sigma^{\prime}$ 
is a smooth compactification 
of $T$. By eliminating the points of 
indeterminacy of the meromorphic extension of 
$f^{\prime}$ to $X_{\Sigma^{\prime}}$ 
as in Section \ref{sec:3} we obtain a commutative 
diagram:

\begin{equation}
\begin{CD}
T  @>{\iota^{\prime}}>> \tl{X_{\Sigma^{\prime}}}
\\
@V{f^{\prime}}VV   @VV{g^{\prime}}V
\\
\CC @>>{j}> \PP^1 
\end{CD}
\end{equation}
of holomorphic maps, where $j$ and $\iota^{\prime}$ 
are open embeddings and $g^{\prime}$ is proper. 
Now restricting the map 
$g^{\prime}: \tl{X_{\Sigma^{\prime}}} 
\longrightarrow \PP^1$ 
to $\CC \subset \PP^1$ we set 
$K^{\prime}=(g^{\prime})^{-1}( \CC )= 
\tl{X_{\Sigma^{\prime}}} 
\setminus (g^{\prime})^{-1} (\infty )$. Let 
$\kappa^{\prime} : K^{\prime} \longrightarrow \CC$ 
be the restriction of $g^{\prime}$ to 
$K^{\prime}$. Set $D^{\prime}=K^{\prime} \setminus 
T$ and let 
$i_{D^{\prime}}: D^{\prime} \longrightarrow K^{\prime}$ 
and $i_T : T \longrightarrow K^{\prime}$ 
be the inclusions. Then we obtain also a commutative 
diagram:

\begin{equation}
\begin{CD}
T  @>{i_T}>> K^{\prime} 
\\
@V{f^{\prime}}VV   @VV{\kappa^{\prime}}V
\\
\CC @= \CC . 
\end{CD}
\end{equation}
Note that the normal crossing divisor $D^{\prime}$ 
in $K^{\prime}$ is a union of 
horizontal $T$-orbits 
(which correspond to atypical faces $\gamma \prec 
\Gamma_{\infty}(f^{\prime})$) and 
the horizontal exceptional divisors 
on $\tl{X_{\Sigma^{\prime}}}$. 
By our induction hypothesis 
and Proposition \ref{LST}, 
for $\lambda \notin A_{f^{\prime}}$ 
the monodromies at infinity of the restrictions 
of $\kappa^{\prime}$ to these horizontal 
$T$-orbits have no $\lambda$-part. 
Moreover the corresponding 
monodromies at infinity over the 
horizontal exceptional divisors 
have only the eigenvalue $1 \in  A_{f^{\prime}}$. 
On the other hand, by applying the 
functor $R \kappa^{\prime}_*=R \kappa^{\prime}_!$ 
to the distinguished triangle 
\begin{equation}
(i_T)_! \CC_{T} \longrightarrow 
R (i_T)_* \CC_{T} \longrightarrow 
(i_{D^{\prime}})_* 
i_{D^{\prime}}^{-1} (R (i_T)_* \CC_{T}) 
\longrightarrow +1 
\end{equation}
we obtain a distinguished triangle 
\begin{equation}
R(f^{\prime})_! \CC_{T} \longrightarrow 
R(f^{\prime})_* \CC_{T} \longrightarrow 
R(\kappa^{\prime} |_{D^{\prime}})_* 
i_{D^{\prime}}^{-1} 
(R (i_T)_* \CC_{T}) \longrightarrow +1. 
\end{equation}
Then by using the above description of 
$\kappa^{\prime} |_{D^{\prime}} 
: D^{\prime} \longrightarrow \CC$, 
for $\lambda \notin A_{f^{\prime}}$ we can 
easily show 
\begin{equation}
\psi_{h, \lambda}(j_! 
R(\kappa^{\prime} |_{D^{\prime}})_* 
i_{D^{\prime}}^{-1} (R (i_T)_* \CC_{T}) 
) \simeq 0,
\end{equation}
where $\psi_{h, \lambda}$ is the $\lambda$-part 
of the nearby cycle functor $\psi_h$. 
This implies that there exists 
an isomorphism 
\begin{equation}\label{PQRS} 
\psi_{h, \lambda}(j_! 
R(f^{\prime})_! \CC_{T}
) \simeq \psi_{h, \lambda}(j_! 
R(f^{\prime})_* \CC_{T}). 
\end{equation}
Namely, for any 
$\lambda \notin A_{f^{\prime}}$ and 
$j \in \ZZ$ we have an isomorphism 
\begin{equation}
H^{j}_c( (f^{\prime})^{-1}(R);\CC)_{\lambda} 
\simeq 
H^{j}( (f^{\prime})^{-1}(R);\CC)_{\lambda} 
\qquad (R\gg 0). 
\end{equation}
Since the generic 
fiber $(f^{\prime})^{-1}(R) \subset T$ 
($R \gg 0$) of $f^{\prime}$ is affine, 
the left (resp. right) hand side is zero 
for $j<n-1$ (resp. $j>n-1$). Hence we obtain 
the desired concentration 
\begin{equation}
H^{j}( (f^{\prime})^{-1}(R);\CC)_{\lambda} 
\simeq 0 \qquad (j \not= n-1) 
\end{equation}
for $R \gg 0$. Now the second assertion 
follows immediately from 
Proposition \ref{LST}. Also 
the last assertion 
follows from the proof of Sabbah 
\cite[Theorem 13.1]{Sabbah-2} 
by using the isomorphism \eqref{PQRS}. 
This completes the proof. 
\end{proof}

\begin{remark}
If $0 \in \Int (NP(f^{\prime}))$ then 
the Laurent polynomial 
$f^{\prime}: T=(\CC^*)^n \longrightarrow \CC$ 
is cohomologically tame at infinity in the sense of 
N{\'e}methi-Sabbah \cite{N-S} and Sabbah \cite{Sabbah-2} 
on our compactification $\tl{X_{\Sigma^{\prime}}}$ of $T$. 
In this case the first assertion of Proposition \ref{ABC} 
is due to \cite{N-S}. 
\end{remark}

\begin{theorem}\label{CONC} 
Let $f \in \CC [x_1, \ldots, x_n]$ be a 
non-convenient polynomial such that 
$\dim \Gamma_{\infty}(f)=n$. Assume 
that $f$ is non-degenerate at infinity. 
Then for any non-atypical eigenvalue 
$\lambda \notin A_f$ of $f$ we have 
the concentration 
\begin{equation}
H^{j}( f^{-1}(R);\CC)_{\lambda} 
\simeq 0 \qquad (j \not= n-1) 
\end{equation}
for the generic 
fiber $f^{-1}(R) \subset \CC^n$ 
($R \gg 0$) of $f$. Moreover for 
such $\lambda$ the relative 
monodromy filtration of 
$H^{n-1}( f^{-1}(R);\CC)_{\lambda}$ 
($R \gg 0$) coincides 
with the absolute one (up to some shift). 
\end{theorem}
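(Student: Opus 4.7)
The plan is to mirror the argument of Proposition \ref{ABC} using the compactification $\tl{X_\Sigma}$ of $\CC^n$ constructed in Section \ref{sec:3} in place of the smooth toric compactification of $T$. Write $g = f \circ \pi : \tl{X_\Sigma} \to \PP^1$, let $K = g^{-1}(\CC) = \tl{X_\Sigma} \setminus g^{-1}(\infty)$, $\kappa = g|_K : K \to \CC$, and $D = K \setminus \iota(\CC^n)$. Denoting by $i_{\CC^n}: \CC^n \hookrightarrow K$ and $i_D: D \hookrightarrow K$ the natural inclusions, application of $R\kappa_! = R\kappa_*$ to the standard adjunction triangle on $K$ produces a distinguished triangle
\begin{equation*}
Rf_! \CC_{\CC^n} \longrightarrow Rf_* \CC_{\CC^n} \longrightarrow
R(\kappa|_D)_* \bigl(i_D^{-1} R(i_{\CC^n})_* \CC_{\CC^n}\bigr) \overset{+1}{\longrightarrow}.
\end{equation*}
It then suffices to establish the vanishing of the $\lambda$-part of $\psi_h(j_! \cdot)$ applied to the third term, for every $\lambda \notin A_f$.

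Next I would stratify $D$ using the toric and blow-up geometry of $\tl{X_\Sigma}$. The irreducible components of $D$ are of two kinds: closures of horizontal $T$-orbits $T_\sigma$ (indexed by cones $\sigma \in \Sigma \setminus \Sigma_0$ with $0 \in \gamma(\sigma)$) and the horizontal exceptional divisors produced by the successive blow-ups constructed in Section \ref{sec:3}. On any horizontal exceptional divisor the monodromy at infinity has only the eigenvalue $1 \in A_f$, so no $\lambda$-part appears for $\lambda \notin A_f$. On each horizontal $T$-orbit $T_\sigma \simeq (\CC^*)^{n-\dim\sigma}$, the restriction of $f$ is a Laurent polynomial $f_\sigma$ whose Newton polyhedron at infinity is read off from the face $\gamma(\sigma)\prec\Gamma_\infty(f)$, and whose non-degeneracy at infinity follows from that of $f$. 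Applying Proposition \ref{ABC} to $f_\sigma$, combined with Proposition \ref{LST} to control the top-degree eigenvalues, shows that the monodromy at infinity of $\kappa|_{T_\sigma}$ has no $\lambda$-part whenever $\lambda \notin A_f$. A Mayer--Vietoris argument along the normal-crossing structure of $D$ then assembles these vanishings into the vanishing of $\psi_{h,\lambda}$ applied to the third term of the triangle.

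This yields the isomorphism
\begin{equation*}
\psi_{h,\lambda}\bigl(j_! Rf_! \CC_{\CC^n}\bigr) \simeq \psi_{h,\lambda}\bigl(j_! Rf_* \CC_{\CC^n}\bigr),
\end{equation*}
and hence $H^j_c(f^{-1}(R);\CC)_\lambda \simeq H^j(f^{-1}(R);\CC)_\lambda$ for $R \gg 0$. Since $f^{-1}(R)$ is affine of dimension $n-1$, the left-hand side vanishes for $j<n-1$ and the right-hand side vanishes for $j>n-1$, forcing the stated concentration. The coincidence of the relative and absolute monodromy filtrations on $H^{n-1}(f^{-1}(R);\CC)_\lambda$ is then deduced from the same isomorphism by the application of Sabbah \cite[Theorem 13.1]{Sabbah-2} used at the end of the proof of Proposition \ref{ABC}.

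The main obstacle is the combinatorial verification underlying the second step: that every atypical eigenvalue of the restricted Laurent polynomial $f_\sigma$ on a horizontal $T$-orbit $T_\sigma$ lies in $A_f$, and that every facet at infinity $\Gamma \prec \Gamma_\infty(f_\sigma)$ satisfying $\lambda^{d_\Gamma}=1$ forces $\lambda \in A_f$. This requires translating the face combinatorics of $\Gamma_\infty(f_\sigma)$ into that of $\Gamma_\infty(f)$ via the dual-fan characterization of admissibility (Proposition \ref{ADD-2}), while keeping careful track of the interaction between the horizontal exceptional divisors and the horizontal $T$-orbit closures meeting them inside $D$.
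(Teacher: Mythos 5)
Your proposal reproduces the paper's proof essentially verbatim: the same compactification $\tl{X_{\Sigma}}$, the same open piece $K=g^{-1}(\CC)$ with $\kappa=g|_K$, the same adjunction triangle pushed forward by $R\kappa_!=R\kappa_*$, the same dichotomy of $D$ into horizontal $T$-orbits (handled via Proposition \ref{ABC} and Proposition \ref{LST}) and horizontal exceptional divisors (eigenvalue $1$ only), the same resulting isomorphism $\psi_{h,\lambda}(j_!Rf_!\CC_{\CC^n})\simeq\psi_{h,\lambda}(j_!Rf_*\CC_{\CC^n})$, the same Artin-vanishing step, and the same appeal to Sabbah for the monodromy filtration. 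The combinatorial verification you flag as the main obstacle is indeed the content of Proposition \ref{ADD-2} and the construction of $A_f$, which the paper invokes with the same brevity.
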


\begin{proof}
We will freely use the notations 
in Section \ref{sec:3}. For example, 
we consider the commutative 
diagram:

\begin{equation}
\begin{CD}
\CC^n  @>{\iota}>> \tl{X_{\Sigma}}
\\
@V{f}VV   @VV{g}V
\\
\CC @>>{j}> \PP^1. 
\end{CD}
\end{equation}
By restricting the map 
$g: \tl{X_{\Sigma}} \longrightarrow \PP^1$ 
to $\CC \subset \PP^1$ we set 
$K=g^{-1}( \CC )= \tl{X_{\Sigma}} 
\setminus g^{-1} (\infty )$ and 
$\kappa =g|_K: K \longrightarrow \CC$. 
Set $D=K \setminus \CC^n$ and let 
$i_D: D \longrightarrow K$ 
and $i : \CC^n \longrightarrow K$ 
be the inclusions. Then we obtain also 
a commutative diagram:

\begin{equation}
\begin{CD}
\CC^n  @>{i}>> K 
\\
@V{f}VV   @VV{\kappa}V
\\
\CC @= \CC . 
\end{CD}
\end{equation}
Note that the normal crossing divisor $D$ 
in $K$ is a union of horizontal $T$-orbits 
(which correspond to atypical faces 
$\gamma \prec \Gamma_{\infty}(f)$) and 
the horizontal exceptional divisors 
on $\tl{X_{\Sigma}}$. 
By Proposition \ref{ABC}, 
for $\lambda \notin A_f$ 
the monodromies at infinity of the restrictions 
of $\kappa$ to these horizontal $T$-orbits 
have no $\lambda$-part.  
Moreover the corresponding 
monodromies at infinity over the 
horizontal exceptional divisors 
have only the eigenvalue $1 \in  A_{f}$. 
On the other hand, by applying the 
functor $R \kappa_*=R \kappa_!$ 
to the distinguished triangle 
\begin{equation}
 i_! \CC_{\CC^n} \longrightarrow 
R i_* \CC_{\CC^n} \longrightarrow 
(i_D)_* i_D^{-1} (R i_* \CC_{\CC^n}) 
\longrightarrow +1 
\end{equation}
we obtain a distinguished triangle 
\begin{equation}
Rf_! \CC_{\CC^n} \longrightarrow 
Rf_* \CC_{\CC^n} \longrightarrow 
R(\kappa |_D)_* i_D^{-1} 
(R i_* \CC_{\CC^n}) 
\longrightarrow +1. 
\end{equation}
Then by using the above description of 
$\kappa |_D: D \longrightarrow \CC$, 
for $\lambda \notin A_f$ we can 
easily show 
\begin{equation}
\psi_{h, \lambda}(j_! R( \kappa |_D)_* i_D^{-1} 
(R i_* \CC_{\CC^n})) \simeq 0. 
\end{equation}
This implies that there exists an isomorphism 
\begin{equation}\label{PQR} 
\psi_{h, \lambda}(j_! Rf_! \CC_{\CC^n}) \simeq 
\psi_{h, \lambda}(j_! Rf_* \CC_{\CC^n}). 
\end{equation}
Namely, for any 
$\lambda \notin A_f$ and 
$j \in \ZZ$ we have an isomorphism 
\begin{equation}
H^{j}_c( f^{-1}(R);\CC)_{\lambda} 
\simeq 
H^{j}( f^{-1}(R);\CC)_{\lambda} 
\qquad (R\gg 0). 
\end{equation}
Since the generic fiber $f^{-1}(R) \subset \CC^n$ 
($R \gg 0$) of $f$ is affine, 
the left (resp. right) hand side is zero 
for $j<n-1$ (resp. $j>n-1$). Hence we obtain 
the desired concentration 
\begin{equation}
H^{j}( f^{-1}(R);\CC)_{\lambda} 
\simeq 0 \qquad (j \not= n-1) 
\end{equation}
for $R \gg 0$. Moreover the last assertion 
follows from the proof of Sabbah 
\cite[Theorem 13.1]{Sabbah-2} 
by using the isomorphism \eqref{PQR}. 
This completes the proof. 
\end{proof}

\begin{remark}
As is clear from the proof above, Theorem 
\ref{CONC} can be easily generalized to 
arbitrary polynomial maps $f: U \longrightarrow 
\CC$ of affine algebraic varieties $U$. 
We leave the precise formulation to the reader. 
\end{remark}

If $n=2$ the first assertion of Theorem \ref{CONC} 
can be improved as follows. 

\begin{lemma}\label{2DIM} 
Assume that a polynomial $f(x,y) \in \CC [x,y]$ 
of two variables is non-degenerate at infinity and 
satisfies the condition 
$\dim \Gamma_{\infty}(f)=2$. Then the 
generic fiber of $f: \CC^2 
\longrightarrow \CC$ is connected and 
hence $H^0(f^{-1}(R); \CC) \simeq \CC$ 
for $R\gg 0$. In particular, for any 
$\lambda \not= 1$ we have 
the concentration 
\begin{equation}
H^{j}(f^{-1}(R);\CC)_{\lambda} \simeq 0 \qquad 
(j \not= 1) 
\end{equation}
($R \gg 0$). 
\end{lemma}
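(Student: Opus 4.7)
My plan is to reduce the concentration to the connectedness of the generic fiber, and then establish the connectedness by showing $f$ is primitive, using the Newton polyhedron non-degeneracy.

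Assuming the generic fiber $f^{-1}(R)$ is connected for $R \gg 0$: since it is a smooth affine curve (generic smoothness), $H^{j}(f^{-1}(R);\CC)=0$ for $j \geq 2$, hence $H^{j}(f^{-1}(R);\CC)_{\lambda}=0$ for $j \geq 2$; and $H^{0}(f^{-1}(R);\CC) \simeq \CC$ with trivial monodromy action (since the geometric monodromy $\Phi_f^{\infty}$ preserves the unique connected component), so $H^{0}(f^{-1}(R);\CC)_{\lambda}=0$ for any $\lambda \not= 1$. This gives the desired concentration in $j=1$.

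The heart of the argument is thus to prove connectedness. I would invoke the classical fact, a consequence of Stein factorization combined with L\"uroth's theorem, that a dominant polynomial map $f : \CC^2 \longrightarrow \CC$ has connected generic fibers if and only if $f$ is \emph{primitive}, meaning it cannot be written as $h \circ g$ with $g \in \CC [x,y]$ and $h \in \CC [t]$ of degree $\geq 2$. I argue by contradiction: suppose $f = h(g)$ with $\deg h = d \geq 2$. If $g$ is constant, so is $f$, contradicting $\dim \Gamma_{\infty}(f)=2$. If $\dim \Gamma_{\infty}(g) \leq 1$, then $\supp (g)$ lies on a single ray from the origin and $g = G(x^a y^b)$ for some $G \in \CC [t]$ and $(a,b) \in \ZZ_{\geq 0}^2 \setminus \{ 0 \}$; then $f = (h \circ G)(x^a y^b)$ forces $\dim \Gamma_{\infty}(f) \leq 1$, a contradiction. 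So $\dim \Gamma_{\infty}(g)=2$, and $\Gamma_{\infty}(g)$ contains at least one $1$-dimensional face $\gamma'$ at infinity (that is, not containing $0$).

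Pick such an edge $\gamma'$ with outer conormal $u$, so that $M := \max_{v \in \Gamma_{\infty}(g)} \langle u, v \rangle$ is strictly positive and attained exactly on $\gamma'$. Writing $f = \sum_{i=0}^{d} c_i g^i$ with $c_d \neq 0$, the maximum of $\langle u, \cdot \rangle$ on $\supp (g^i)$ equals $iM$, so the maximum on $\supp (f)$ is $dM$ and is realized only by the contribution of $g^d$. Consequently the face $\gamma$ of $\Gamma_{\infty}(f)$ cut out by $u$ equals $d \cdot \gamma'$, and
\begin{equation}
f_{\gamma} = c_d \cdot (g_{\gamma'})^{d} .
\end{equation}
Since $\gamma'$ is $1$-dimensional, $g_{\gamma'}$ contains at least two monomials; since $d \geq 2$, the hypersurface $\{ f_{\gamma}=0 \} \subset ( \CC^*)^2$ coincides as a set with $\{ g_{\gamma'}=0 \}$ but carries multiplicity $d \geq 2$, hence is not reduced. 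This contradicts the non-degeneracy of $f$ at infinity, so $f$ must be primitive and $f^{-1}(R)$ must be connected.

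The main obstacle is the clean identification $f_{\gamma} = c_d (g_{\gamma'})^{d}$, together with justifying the primitivity-connectedness equivalence for the non-proper map $f : \CC^2 \longrightarrow \CC$. The latter is standard but might deserve a brief remark, e.g.\ by passing to the geometric generic fiber of $f$ and using that its number of connected components controls the number of connected components of the topological fibers over a Zariski open subset of $\CC$.
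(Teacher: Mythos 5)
Your proposal is correct and takes essentially the same approach as the paper: both reduce the lemma to showing $f$ is not a nontrivial composite $h \circ g$ (via the standard primitivity-connectedness equivalence), and then both derive a contradiction by identifying a face-at-infinity $\gamma$ of $\Gamma_{\infty}(f)$ whose face function is a $d$-th power $c_d(g_{\gamma'})^{d}$ with $d \geq 2$, violating reducedness and hence non-degeneracy at infinity. Your write-up is somewhat more detailed (explicitly ruling out $g$ constant and $\dim\Gamma_{\infty}(g)\leq 1$, and spelling out the conormal computation), but the mathematical content matches the paper's proof.
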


\begin{proof}
By the classification of 
open connected Riemann surfaces, 
it suffices to show that 
there is no decomposition of $f$ of the form
\begin{equation}
f(x,y)= \hat{f}( \tilde{f}(x,y)) 
\end{equation}
by polynomials $\hat{f}(t)$ and $\tilde{f}(x,y)$ 
such that ${\rm deg} \hat{f}(t) \geq 2$. 
Assume that there exists such a decomposition 
$f=\hat{f} \circ \tilde{f}$ and set 
$m={\rm deg} \hat{f} \geq 2$. Then we have 
$\Gamma_{\infty}(f)=m \Gamma_{\infty}( \tilde{f} 
)$. Take a face at infinity $\gamma \prec 
\Gamma_{\infty}(f)$ of $\Gamma_{\infty}(f)$ 
satisfying $\dim \gamma =1$ and let 
$\tilde{\gamma} \prec \Gamma_{\infty}( \tilde{f} 
)$ be the corresponding one of 
$\Gamma_{\infty}( \tilde{f} )$ such that 
$\gamma =m \tilde{\gamma}$. Denote by $f_{\gamma}$ 
(resp. $\tilde{f}_{\tilde{\gamma}}$) the 
$\gamma$-part of $f$ (resp.  the 
$\tilde{\gamma}$-part of $\tilde{f}$). Then 
we have $f_{\gamma}=(\tilde{f}_{\tilde{\gamma}})^m$ 
(up to some non-zero constant multiple) 
for $m \geq 2$. This contradicts the 
non-degeneracy at infinity of $f$. 
\end{proof}
For an element $[V] \in \KK_0(\HSm)$, 
$V \in \HSm $ with a 
quasi-unipotent endomorphism 
$\Theta \colon V \simto V$, $p, q \geq 0$ and 
$\lambda \in \CC$ denote by 
$e^{p,q}([V])_{\lambda}$ the dimension of the 
$\lambda$-eigenspace of the morphism 
$V^{p,q} \simto V^{p,q}$ induced by 
$\Theta$ on the $(p,q)$-part $V^{p,q}$ of $V$. 
Then by Theorem \ref{CONC} 
we immediately obtain the following result. 

\begin{corollary}
Assume that $\dim \Gamma_{\infty}(f)=n$ 
and $f$ is non-degenerate at infinity. 
Let $\lambda \notin A_f$. 
Then we have $e^{p,q}( 
[H_f^{\infty}])_{\lambda}=0$ 
for $(p,q) \notin [0,n-1] \times [0,n-1]$. 
Moreover for any $(p,q) \in [0,n-1] 
\times [0,n-1]$ we have 
the Hodge symmetry 
\begin{equation}
e^{p,q}( [H_f^{\infty}])_{\lambda}=e^{n-1-q,n-1-p}( 
[H_f^{\infty}])_{\lambda}.
\end{equation} \fin
\end{corollary}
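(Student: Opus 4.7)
The plan is to use Theorem \ref{CONC} to reduce the corollary to a single mixed Hodge structure, then invoke Deligne's Hodge-number bounds for the vanishing, and the isomorphisms $N^j \colon {\rm gr}^W_{n-1+j} \simto {\rm gr}^W_{n-1-j}(-j)$ afforded by a limit mixed Hodge structure whose weight filtration is the monodromy filtration of $N$ for the symmetry. By Theorem \ref{CONC}, for $\lambda \notin A_f$ the generalized eigenspaces $H^j(f^{-1}(R); \CC)_\lambda$ vanish for $j \neq n-1$. Under the identification $H^j_c(f^{-1}(R); \CC) \simeq H^j \psi_h(j_! Rf_! \CC_{\CC^n})$ used in Section \ref{sec:3}, this forces $[H_f^\infty]_\lambda$ to be represented, up to the sign $(-1)^{n-1}$, by the single limit mixed Hodge structure on $H^{n-1}(f^{-1}(R); \CC)_\lambda$ together with its quasi-unipotent monodromy. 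Both parts of the corollary therefore reduce to statements about this one MHS.

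For the vanishing of $e^{p,q}$ outside $[0, n-1] \times [0, n-1]$, I would invoke the standard bound for Deligne's MHS on the cohomology of a complex algebraic variety: the Hodge numbers $h^{p,q}(H^k(X; \CC))$ vanish unless $0 \leq p, q \leq k$. Applied to the smooth affine variety $f^{-1}(R)$ with $k = n-1$, this yields exactly the claimed square. For the Hodge symmetry $e^{p,q} = e^{n-1-q, n-1-p}$, the key input is the second assertion of Theorem \ref{CONC}: on $H^{n-1}(f^{-1}(R); \CC)_\lambda$ the relative monodromy filtration coincides with the absolute weight filtration, up to shift. Writing $N = \log T_u$ for the logarithm of the unipotent part of $\Phi_{n-1, \lambda}^\infty$, the weight filtration $W_\bullet$ is therefore, up to shift, the monodromy filtration of $N$ centered at $n-1$. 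Standard results on limit mixed Hodge structures then provide isomorphisms of pure Hodge structures $N^j \colon {\rm gr}^W_{n-1+j} \simto {\rm gr}^W_{n-1-j}(-j)$ for every $j \geq 0$, which on Hodge numbers reads $h^{p,q}({\rm gr}^W_{n-1+j}) = h^{p-j, q-j}({\rm gr}^W_{n-1-j})$. Substituting $j = p + q - (n-1)$ yields $p - j = n - 1 - q$ and $q - j = n - 1 - p$; since a pure Hodge structure of weight $k$ contributes to $h^{p,q}$ only at $p + q = k$, the identity $e^{p,q} = e^{n-1-q, n-1-p}$ drops out.

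The main obstacle will be the bookkeeping of three distinct weight filtrations — Deligne's MHS on $H^{n-1}(f^{-1}(R))$, the limit weight filtration coming from $\psi_h$, and the monodromy filtration of $N$ — together with the Tate twists appearing in the $N^j$-isomorphisms. Aligning these shifts is precisely the content of the second part of Theorem \ref{CONC}, after which the symmetry is a formal consequence of the $\mathfrak{sl}_2$-structure on the limit MHS.
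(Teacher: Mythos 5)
Your proof proposal takes essentially the same route the paper has in mind: the paper gives no written-out argument and simply declares the corollary an immediate consequence of Theorem \ref{CONC}, and what you have written is a reasonable elaboration of what ``immediate'' covers. The reduction to the single limit mixed Hodge structure on $H^{n-1}(f^{-1}(R);\CC)_{\lambda}$, and the derivation of the Hodge symmetry from the second assertion of Theorem \ref{CONC} (relative $=$ absolute monodromy filtration, centered at $n-1$) together with the $\mathfrak{sl}_2$-isomorphisms $N^j \colon \mathrm{gr}^W_{n-1+j} \simto \mathrm{gr}^W_{n-1-j}(-j)$, is exactly the intended argument, and your index substitution $j=p+q-(n-1)$ is correct.

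There is one step you should tighten. For the vanishing of $e^{p,q}$ outside the square $[0,n-1]\times[0,n-1]$ you invoke Deligne's bound $h^{p,q}(H^k(X;\CC))=0$ unless $0\le p,q\le k$, ``applied to $f^{-1}(R)$ with $k=n-1$.'' But $e^{p,q}([H_f^{\infty}])_{\lambda}$ is computed from the \emph{limit} mixed Hodge structure on $\psi_h$, whose weight filtration is the relative monodromy filtration, not Deligne's weight filtration on the fiber. Since the $(p,q)$-type is read off from $\mathrm{Gr}^F_p \mathrm{Gr}^W_{p+q}$, and $W$ has changed, you cannot quote Deligne's bound for the fiber's MHS and conclude the same bound for the limit MHS without a further word. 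The correct justification is the following two observations: (i) the limit Hodge filtration $F^{\bullet}_{\lim}$ has the same rank function as $F^{\bullet}_t$ on the nearby smooth fiber (Schmid), so $F^n_{\lim}=0$ and $F^0_{\lim}$ is everything, which gives $e^{p,q}=0$ unless $0\le p\le n-1$; and (ii) for any real mixed Hodge structure one has $e^{p,q}=e^{q,p}$ (conjugation on each pure graded piece), which then also forces $0\le q\le n-1$. With that replacement the argument is complete and matches the paper's intent; alternatively, the bound can be obtained from Theorem \ref{thm:7-12}, which rewrites $[H_f^{\infty}]_{\lambda}$ as a sum of classes $\chi_h((1-\LL)^{m_{\gamma}}[Z^*_{\Delta_{\gamma}}])_{\lambda}$ of genuine Deligne MHS's of varieties of dimension $s_{\gamma}-1\le n-1$, where Deligne's bounds do apply directly.
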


By using the notations in Section \ref{sec:3} 
we thus obtain the following theorem, 
whose proof is similar to that of 
\cite[Theorem 5.7 (ii)]{M-T-4}. 

\begin{theorem}\label{MAIN}
Assume that $\dim \Gamma_{\infty}(f)=n$ 
and $f$ is non-degenerate at infinity. 
Let $\lambda \notin A_f$ and $k \geq 1$. 
Then the number of the Jordan blocks for the 
eigenvalue $\lambda$ with sizes $\geq k$ in 
$\Phi_{n-1}^{\infty} \colon 
H^{n-1}(f^{-1}(R) ;\CC) \simto 
H^{n-1}(f^{-1}(R) ;\CC)$ ($R \gg 0$) is equal to
\begin{equation}
(-1)^{n-1}\sum_{p+q=n-2+k, n-1+k}
\left\{ \sum_{\gamma} 
e^{p,q} ( \chi_h ((1-\LL)^{m_{\gamma}} \cdot 
[Z_{\Delta_{\gamma}}^*] ))_{\lambda} \right\}, 
\end{equation}
where in the sum $\sum_{\gamma}$ 
the face $\gamma$ of $\Gamma_{\infty}(f)$ 
ranges through the admissible ones at infinity. 
\fin
\end{theorem}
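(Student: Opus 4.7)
The plan is to translate the counting of Jordan blocks of $\Phi_{n-1,\lambda}^{\infty}$ into a Hodge-theoretic computation on the generalized $\lambda$-eigenspace, following the argument of \cite[Theorem 5.7 (ii)]{M-T-4}. The innovation, compared with the tame case, is that the concentration and purity inputs previously supplied by Broughton's theorem are now provided by Theorem \ref{CONC}, so the same line of proof goes through.

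First, since $\lambda\notin A_f$, Theorem \ref{CONC} gives the cohomological concentration $H^j(f^{-1}(R);\CC)_\lambda=0$ for $j\ne n-1$, together with the key additional statement that on $H^{n-1}(f^{-1}(R);\CC)_\lambda$ the relative monodromy filtration coincides, up to the shift by $n-1$, with the absolute weight filtration of the underlying mixed Hodge structure. Consequently, in $\KK_0(\HSm)$ one has $[H_f^\infty]_\lambda=(-1)^{n-1}[H^{n-1}(f^{-1}(R);\CC)_\lambda]$, so the refined Hodge numbers satisfy
\begin{equation}
e^{p,q}([H_f^\infty])_\lambda=(-1)^{n-1}h^{p,q}\bigl(H^{n-1}(f^{-1}(R);\CC)_\lambda\bigr)_\lambda.
\end{equation}

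Second, I would apply the standard Jordan block count attached to the monodromy filtration. For a nilpotent endomorphism $N$ on a finite-dimensional space with monodromy filtration $W$ centered at $0$, the primitive part $P_r=\ker N^{r+1}\cap \mathrm{Gr}^W_r$ ($r\ge 0$) generates Jordan blocks of size $r+1$; decomposing $V=\bigoplus_{r\ge 0}\bigoplus_{i=0}^{r}N^iP_r$ and summing dimensions gives $\dim \mathrm{Gr}^W_{k-1}+\dim \mathrm{Gr}^W_k=\sum_{r\ge k-1}\dim P_r$, which is precisely the number of Jordan blocks of size $\ge k$. Applied to the logarithm of the unipotent part of $\Phi_{n-1,\lambda}^\infty$, and translated to the weight filtration centered at $n-1$ via Theorem \ref{CONC}, this yields
\begin{equation}
\#\{\text{Jordan blocks of size}\ge k\}=\sum_{p+q=n-2+k}h^{p,q}_\lambda+\sum_{p+q=n-1+k}h^{p,q}_\lambda.
\end{equation}

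Third, substituting the sign relation of the first step converts the right-hand side into $(-1)^{n-1}\sum_{p+q=n-2+k,\,n-1+k}e^{p,q}([H_f^\infty])_\lambda$, and Theorem \ref{thm:7-12} rewrites $[H_f^\infty]_\lambda$ as $\sum_\gamma \chi_h((1-\LL)^{m_\gamma}\cdot[Z_{\Delta_\gamma}^*])_\lambda$, where $\gamma$ ranges over the admissible faces at infinity of $\Gamma_{\infty}(f)$. Assembling the pieces gives exactly the formula in the statement. The main obstacle is precisely the identification of the relative monodromy filtration on the $\lambda$-part with the absolute weight filtration: without it the Hodge numbers would only control the graded pieces of the weight filtration rather than the genuine sizes of the Jordan blocks, and the tame-case argument of \cite{M-T-4} would break down. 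It is this identification, furnished by Theorem \ref{CONC} via Sabbah's \cite[Theorem 13.1]{Sabbah-2}, that makes the non-tame extension work.
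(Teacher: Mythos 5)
Your proposal is correct and matches the paper's approach: the paper explicitly reduces the proof of Theorem~\ref{MAIN} to the argument of \cite[Theorem 5.7 (ii)]{M-T-4}, with the new inputs being Theorem~\ref{CONC} (concentration plus the identification of the relative and absolute monodromy filtrations via Sabbah's \cite[Theorem 13.1]{Sabbah-2}) and Theorem~\ref{thm:7-12} for the Newton-polyhedron expression of $[H_f^\infty]_\lambda$; your three steps reconstruct exactly that reduction, including the correct primitive-part count $\dim \mathrm{Gr}^W_{n-2+k}+\dim \mathrm{Gr}^W_{n-1+k}$ for Jordan blocks of size $\geq k$.
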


By this theorem and the results in 
\cite[Section 2]{M-T-4} we 
immediately obtain the generalizations of 
\cite[Theorems 5.9, 5.14 and 5.16]{M-T-4} 
to non-tame polynomials. 
Here we introduce only that of 
\cite[Theorem 5.9]{M-T-4}. 
Denote by $\Cone_{\infty}(f)$ 
the closed cone $\RR_+ \Gamma_{\infty}(f) 
\subset \RR^n_+$ generated by 
$\Gamma_{\infty}(f)$. 
Let $q_1,\ldots,q_l$ (resp. $\gamma_1,\ldots, 
\gamma_{l^{\prime}}$) be the 
$0$-dimensional (resp. $1$-dimensional) 
faces at infinity of $\Gamma_{\infty}(f)$ such 
that $q_i\in \Int (\Cone_{\infty}(f))$ (resp. 
the relative interior 
$\relint(\gamma_i)$ of $\gamma_i$ is 
contained in 
$\Int(\Cone_{\infty}(f))$). 
For each $q_i$ (resp. $\gamma_i$), 
denote by $d_i >0$ (resp. $e_i>0$) its 
lattice distance from 
the origin $0\in \RR^n$. For $1\leq i 
\leq l^{\prime}$, let $\Delta_i$ be the 
convex hull of $\{0\}\sqcup 
\gamma_i$ in $\RR^n$. Then for $\lambda 
 \not= 1$ and $1 \leq 
i \leq l^{\prime}$ such that $\lambda^{e_i}=1$ we set
\begin{equation}
n(\lambda)_i
= \sharp\{ v\in \ZZ^n \cap \relint(\Delta_i) 
\ |\ \height (v, \gamma_i)=k\} 
+\sharp \{ v\in \ZZ^n \cap \relint(\Delta_i) 
\ |\ \height (v, 
\gamma_i)=e_i-k\},
\end{equation}
where $k$ is the minimal positive 
integer satisfying 
$\lambda=\zeta_{e_i}^{k}$ and for 
$v\in \ZZ^n \cap \relint(\Delta_i)$ we 
denote by $\height (v, \gamma_i)$ the 
lattice height of $v$ from the base 
$\gamma_i$ of $\Delta_i$. 
Then we have the following generalization of 
\cite[Theorem 5.9]{M-T-4}. 

\begin{theorem}
Assume that $\dim \Gamma_{\infty}(f)=n$ 
and $f$ is non-degenerate at infinity. 
Let $\lambda \notin A_f$. Then we have
\begin{enumerate}
\item The number of the Jordan blocks 
for the eigenvalue $\lambda$ with the 
maximal possible size $n$ in 
$\Phi_{n-1}^{\infty} \colon 
H^{n-1}(f^{-1}(R);\CC) \simto H^{n-1}(f^{-1}(R);\CC)$ 
($R \gg 0$) is equal 
to $\sharp \{q_i \ |\ \lambda^{d_i}=1\}$.
\item The number of the Jordan blocks for 
the eigenvalue $\lambda$ with the second maximal possible size 
$n-1$ in $\Phi_{n-1}^{\infty}$ is 
equal to $\sum_{i \colon \lambda^{e_i}=1} 
n(\lambda)_i$.
\end{enumerate} \fin
\end{theorem}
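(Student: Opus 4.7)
The plan is to specialize Theorem \ref{MAIN} to $k=n$ and to $k=n-1$ and then invoke the same Hodge-combinatorial calculations as were used in the tame case in \cite[Theorem 5.9]{M-T-4}. The key point is that for $\lambda\notin A_f$, Theorem \ref{thm:7-12} already restricts the motivic Hodge identity to admissible faces at infinity, so the face-by-face bookkeeping is literally identical to the tame case. Admissibility together with the ``interior'' conditions $q_i\in\Int(\Cone_\infty(f))$ and $\relint(\gamma_i)\subset\Int(\Cone_\infty(f))$ will turn out to be equivalent to admissibility plus $s_\gamma=n$, which is exactly what is needed in order for the relevant Hodge pieces to sit at the top bidegree (a face on a coordinate hyperplane has $s_\gamma<n$; a face on a non-coordinate boundary facet of $\Cone_\infty(f)$ is contained in an atypical face, hence not admissible).

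For (1), I take $k=n$ in Theorem \ref{MAIN}. The constraints $p+q\in\{2n-2,2n-1\}$ and $0\leq p,q\leq n-1$ force $(p,q)=(n-1,n-1)$, so only one bidegree contributes. The motive $(1-\LL)^{m_\gamma}\cdot[Z^*_{\Delta_\gamma}]$ has Hodge pieces supported in $[0,s_\gamma-1]\times[0,s_\gamma-1]$, so the bidegree $(n-1,n-1)$ can be reached only when $s_\gamma=n$; combined with admissibility this is equivalent to $\gamma\subset\Int(\Cone_\infty(f))$. For $\lambda\neq 1$ the standard Danilov--Khovanskii vanishing (see \cite[Section 2]{M-T-4}) shows that only the $0$-dimensional such faces $q_i$ contribute to the $\lambda$-eigenpart of the top $(n-1,n-1)$-piece, higher-dimensional admissible faces contributing zero. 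For each such $q_i$, the variety $Z^*_{\Delta_{q_i}}$ is the $\mu_{d_i}$-torsor $\{a_{q_i}x^{q_i}=1\}$ in $T_{\Delta_{q_i}}\simeq\CC^*$, a finite set of $d_i$ points; after multiplication by $(1-\LL)^{n-1}$ its $\lambda$-eigenpart contributes exactly $1$ to $e^{n-1,n-1}(\cdot)_\lambda$ when $\lambda^{d_i}=1$ and $0$ otherwise. Summing over the $q_i$ yields (1).

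For (2), I take $k=n-1$ in Theorem \ref{MAIN}. The bidegrees $(p,q)$ with $p+q\in\{2n-3,2n-2\}$ and $0\leq p,q\leq n-1$ are $(n-1,n-1)$, $(n-2,n-1)$ and $(n-1,n-2)$. The first reproduces the count from (1); by the Hodge symmetry established just before, what remains is twice $e^{n-2,n-1}(\cdot)_\lambda$ summed over admissible faces. The same weight bound $p,q\leq s_\gamma-1$ and the Danilov--Khovanskii vanishing isolate the $1$-dimensional admissible faces $\gamma_i$ with $\relint(\gamma_i)\subset\Int(\Cone_\infty(f))$: for these $s_{\gamma_i}=n$, $m_{\gamma_i}=n-2$, and $Z^*_{\Delta_{\gamma_i}}$ is a non-degenerate Laurent curve in the $2$-torus $T_{\Delta_{\gamma_i}}$. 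The $\lambda$-eigenparts of its Hodge numbers are computed by the explicit formulas of \cite[Section 2]{M-T-4} in terms of $\ZZ^n\cap\relint(\Delta_i)$ stratified by $\height(\cdot,\gamma_i)$; one finds that each such $\gamma_i$ with $\lambda^{e_i}=1$ contributes exactly $n(\lambda)_i$, giving (2). The main obstacle is to verify that, for $\lambda\notin A_f$, no other admissible face can contribute to the top two bidegrees; this is the combinatorial content of the weight inequality $\dim\gamma+m_\gamma=s_\gamma-1\leq n-1$ together with the vanishing of the top Hodge piece of a non-degenerate toric hypersurface at non-trivial eigenvalues, both of which are already established in \cite[Section 2]{M-T-4}.
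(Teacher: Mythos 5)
Your proposal follows exactly the route the paper itself takes: the paper offers no separate proof for this statement, only the remark that it follows ``by [Theorem~\ref{MAIN}] and the results in [Section~2][M-T-4]'', and you carry that out by specializing Theorem~\ref{MAIN} to $k=n$ and $k=n-1$, identifying via admissibility and the weight bound $p,q\leq s_\gamma-1$ exactly which faces at infinity can contribute to the top bidegrees, and then invoking the Hodge-number computations of \cite[Section 2]{M-T-4}. The translation of the ``interior'' conditions $q_i\in\Int(\Cone_\infty(f))$ and $\relint(\gamma_i)\subset\Int(\Cone_\infty(f))$ into ``admissible plus $s_\gamma=n$'' is the right observation and is what makes the non-tame case reduce cleanly to the tame one once $\lambda\notin A_f$.

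One small imprecision: in part (2) you say that ``by the Hodge symmetry established just before, what remains is twice $e^{n-2,n-1}(\cdot)_\lambda$''. The symmetry proved in the corollary is $e^{p,q}([H_f^\infty])_\lambda=e^{n-1-q,n-1-p}([H_f^\infty])_\lambda$, which sends $(n-2,n-1)$ to $(0,1)$, not to $(n-1,n-2)$; it does not equate the two bidegrees you are summing. What is actually true (and what \cite[Section 2]{M-T-4} establishes) is that the \emph{sum} $e^{n-2,n-1}(\cdot)_\lambda+e^{n-1,n-2}(\cdot)_\lambda$ over the $1$-dimensional admissible faces equals $\sum_{i:\lambda^{e_i}=1} n(\lambda)_i$, with the two terms in the definition of $n(\lambda)_i$ (heights $k$ and $e_i-k$) matching the two Hodge bidegrees separately. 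Since you ultimately defer the face-by-face count to \cite[Section 2]{M-T-4} anyway, this does not affect the conclusion, but the intermediate justification should be corrected.
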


\begin{remark}
By Proposition \ref{ABC} we can 
similarly obtain the analogues of 
\cite[Theorems 5.9, 5.14 and 5.16]{M-T-4} 
for Laurent polynomials $f^{\prime} \in 
\CC [x_1^{\pm 1}, \ldots, x_n^{\pm 1}]$. 
The results on the Jordan normal forms of 
their monodromies at infinity 
for the eigenvalues $\lambda \notin A_{f^{\prime}}$ 
are explicitly described by 
the admissible faces at infinity 
of $\Gamma_{\infty}(f^{\prime})$. 
We omit the details. 
\end{remark}

Moreover in the situation above, we can obtain also 
a closed formula for the multiplicities of the 
non-atypical eigenvalues $\lambda \notin A_f$ in 
the monodromy at infinity 
$\Phi_{n-1}^{\infty} \colon 
H^{n-1}(f^{-1}(R);\CC) \simto H^{n-1}(f^{-1}(R);\CC)$ 
($R \gg 0$) as follows. We define the monodromy zeta 
function at infinity 
$\zeta_f^{\infty}(t) \in \CC ((t))$ of $f$ by 
\begin{equation}
\zeta_f^{\infty}(t)= \prod_{j=0}^{n-1} 
\det ( \id -t \Phi_{j}^{\infty})^{(-1)^j} 
\in \CC ((t)). 
\end{equation}
Then by our compactification 
$\tl{X_{\Sigma}}$ of $\CC^n$ we obtain the 
following refinement of the previous results 
in \cite{L-S} and \cite{M-T-2}. In particular 
here we can remove the condition $( \ast )$ in 
\cite{M-T-2}. 

\begin{theorem}\label{MZF} 
Assume that $\dim \Gamma_{\infty}(f)=n$ 
and $f$ is non-degenerate at infinity. 
Then we have 
\begin{equation}
\zeta_f^{\infty}(t)= \prod_{\gamma} 
(1 -t^{d_{\gamma}}
)^{(-1)^{s_{\gamma}-1} \Vol_{\ZZ}( \gamma )} 
\in \CC ((t)), 
\end{equation}
where in the product $\prod_{\gamma}$ the face 
$\gamma \prec \Gamma_{\infty}(f)$ 
ranges through those 
at infinity satisfying the condition 
$m_{\gamma}=s_{\gamma}- \dim \gamma -1=0$ and 
$\Vol_{\ZZ}( \gamma ) \in \ZZ_{>0}$ 
is the normalized $(\dim \gamma)$-dimensional 
volume of $\gamma$ with respect to the 
lattice $\LL ( \gamma ) \cap \ZZ^n \simeq 
\ZZ^{\dim \gamma}$. 
\fin
\end{theorem}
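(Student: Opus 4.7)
The plan is to apply an A'Campo-type monodromy zeta formula on the compactification $\tl{X_{\Sigma}}$ from Section \ref{sec:3}. Since $g \colon \tl{X_{\Sigma}} \longrightarrow \PP^1$ is proper and $Y=g^{-1}(\infty)$ is a normal crossing divisor, the standard formula for the zeta function of the monodromy on $\psi_{\tl{g}} \CC_{\tl{X_{\Sigma}}}$ yields
\begin{equation*}
\zeta_f^{\infty}(t) = \prod_E (1-t^{m_E})^{\chi(E^{\circ})},
\end{equation*}
where $E$ ranges over the irreducible components of $U=Y \cap \Omega$, $m_E$ is the order of zero of $\tl{g}=1/f$ along $E$, and $E^{\circ}:=E \setminus \bigcup_{E^{\prime}\not= E} E^{\prime}$.

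First I would separate the components of $U$ into two classes: the proper transforms $D_i^{\prime}$ of the toric divisors $D_i=\overline{T_{\rho_i}}$ attached to the rays at infinity $\rho_i$ (with multiplicity $a_i$), and the horizontal exceptional divisors produced by the iterated blow-ups $\tl{X_{\Sigma}} \longrightarrow X_{\Sigma}$ eliminating the indeterminacy of $f$. Each such exceptional divisor is, away from its intersections with the other components, a $\CC^*$-bundle by the construction of the blow-up, so its open part has Euler characteristic zero and contributes trivially to the product (the standard A'Campo cancellation for blow-ups along smooth centers). One is left with the product over the proper transforms $D_i^{\prime}$ only.

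Next, I would further stratify each $D_i^{\prime \circ}$ using the induced toric stratification of $D_i$. A point of $D_i^{\prime \circ}$ lies in some orbit $T_{\sigma}$ with $\rho_i \prec \sigma \in \Sigma$ and supporting face $\gamma(\sigma) \prec \Gamma_{\infty}(f)$. By the non-degeneracy at infinity of $f$, on $T_{\sigma}$ the zero locus of $f_{\sigma}$ is a non-degenerate Laurent-polynomial hypersurface; combining the vanishing $\chi((\CC^*)^k)=0$ for $k \geq 1$ with Kushnirenko's theorem, only strata whose supporting face $\gamma$ satisfies $m_{\gamma}=s_{\gamma}-\dim \gamma -1=0$ produce a nonzero Euler characteristic. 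For such a face the contribution is exactly $(-1)^{s_{\gamma}-1}\Vol_{\ZZ}(\gamma)$, computed inside the minimal coordinate subspace $\RR^S$ containing $\gamma$, and the pole order of $f$ along the associated divisor is the lattice distance $d_{\gamma}$. Assembling the pieces gives the stated product.

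The hard part will be the bookkeeping between the tower of blow-ups and the toric stratification near horizontal $T$-orbits, and verifying that every horizontal exceptional divisor really has vanishing Euler characteristic on its open stratum even when atypical faces of $\Gamma_{\infty}(f)$ are present. This is the improvement over \cite{M-T-2} that allows dropping the condition $(\ast)$ imposed there: the compactification $\tl{X_{\Sigma}}$ treats horizontal $T$-orbits intrinsically, and the arguments already made at the Laurent-polynomial level in Proposition \ref{ABC} and in the proof of Theorem \ref{CONC} give exactly the geometric control needed to run the A'Campo cancellation globally.
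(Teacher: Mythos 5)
Your overall route is the one the paper itself has in mind: Theorem \ref{MZF} is stated without proof precisely because it is meant to follow from the compactification $\tl{X_{\Sigma}}$ of Section \ref{sec:3} together with the arguments of Libgober--Sperber and \cite{M-T-2}, i.e.\ an A'Campo-type computation of the nearby cycles of $\tl{g}=1/f$ along $U=Y\cap\Omega$, followed by the toric stratification of the divisors $D_i$ and the Bernstein--Khovanskii--Kushnirenko computation of the Euler characteristics of the non-degenerate hypersurfaces $Z\cap T_{\sigma}$; your identification of the surviving faces as those with $m_{\gamma}=s_{\gamma}-\dim\gamma-1=0$ (admissible or not, in accordance with the example following the theorem) and of the multiplicity as $d_{\gamma}$ is exactly right. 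Two small points of hygiene: the components of $U$ you must discard are the \emph{vertical} exceptional divisors of the towers (the horizontal ones are not contained in $Y$ and are moreover removed by passing to $\Omega$, so they never enter the product), and since the isomorphisms \eqref{eq:4-2}, \eqref{eq:4-5} compute $H^{j}_c(f^{-1}(R);\CC)$, your A'Campo product a priori gives the zeta function for compactly supported cohomology; one passes to $\zeta_f^{\infty}(t)$ as defined via $H^{j}$ by Poincar\'e duality and quasi-unipotency (the eigenvalues are roots of unity and the characteristic polynomials are rational), which should be said.

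The genuine soft spot is the step ``each exceptional divisor is, away from its intersections with the other components, a $\CC^*$-bundle, so its open part has Euler characteristic zero.'' This is true only over the open toric stratum of the center: over deeper loci, e.g.\ over $D_1\cap D_2\cap Z$ or near the horizontal $T$-orbits attached to atypical faces, the fibre of the relevant open stratum is $\PP^1$ minus three or more points (it also meets $D_2^{\prime}$, the exceptional divisors of the other towers, etc.), and the base there is a non-degenerate hypersurface in a smaller torus whose Euler characteristic is in general non-zero; so termwise vanishing of $\chi$ for each exceptional divisor is not automatic, and the required vanishing/cancellation is exactly the content of Theorem \ref{thm:7-7} (imported from \cite[Theorem 4.7]{M-T-4} and extended here to the non-convenient case via $\Omega$). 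The cleanest repair is to not redo the blow-up bookkeeping at all: combine Theorem \ref{thm:7-6} and Theorem \ref{thm:7-7} to reduce to the classes $[\tl{D_I^{\circ}}]$ with their $\mu_{d_I}$-actions, and then run your BKK/toric-stratification computation on the $D_I^{\circ}$ only. As written, your proposal defers precisely this cancellation to ``bookkeeping,'' which is the one place where the argument is not yet a proof.
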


\begin{example}
Let $n=3$ and consider a non-convenient polynomial 
$f(x,y,z)$ on $\CC^3$ whose Newton polyhedron at 
infinity $\Gamma_{\infty}(f)$ is the convex hull of 
the points $(2,0,0), (0,2,0), (1,1,1) \in \RR^3_+$ 
and the origin $0=(0,0,0) \in \RR^3$. Then the line 
segment connecting the point $(2,0,0)$ 
and the origin $0 \in \RR^3$ is an atypical 
face of $\Gamma_{\infty}(f)$. Hence the $0$-dimensional 
face at infinity $\gamma = \{ (2,0,0) \} \prec 
\Gamma_{\infty}(f)$ of $\Gamma_{\infty}(f)$ 
contained in it is not admissible. However it 
satisfies the condition 
$m_{\gamma}=s_{\gamma}- \dim \gamma -1= 
1-0-1=0$. For the proof of Theorem \ref{MZF} 
we have to consider also the contribution from 
such non-admissible faces at infinity of 
$\Gamma_{\infty}(f)$. 
\end{example}
If we restrict ourselves to the non-atypical 
eigenvalues $\lambda \notin A_f$ for which we 
have the concentration 
\begin{equation}
H^{j}( f^{-1}(R);\CC)_{\lambda} 
\simeq 0 \qquad (j \not= n-1) 
\end{equation}
($R \gg 0$) in Theorem \ref{CONC}, 
we have the following result. 

\begin{corollary}
Assume that $\dim \Gamma_{\infty}(f)=n$ 
and $f$ is non-degenerate at infinity. 
Then for any $\lambda \notin A_f$ 
the multiplicity of the eigenvalue $\lambda$ in 
the monodromy at infinity 
$\Phi_{n-1}^{\infty}$ of $f$ is equal to that 
of the factor $(1- \lambda t)= \lambda \cdot 
(1/ \lambda -t)$ in the rational function 
\begin{equation}
\prod_{\gamma} 
(1 -t^{d_{\gamma}})^{(-1)^{n- s_{\gamma}} 
\Vol_{\ZZ}( \gamma )} 
\in \CC ((t)), 
\end{equation}
where in the product $\prod_{\gamma}$ the face 
$\gamma \prec \Gamma_{\infty}(f)$ of 
$\Gamma_{\infty}(f)$ ranges through the admissible 
ones at infinity satisfying the condition 
$m_{\gamma}=s_{\gamma}- \dim \gamma -1=0$. 
\fin
\end{corollary}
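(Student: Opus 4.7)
The plan is to read off the multiplicity of $\lambda$ directly from the product formula of Theorem \ref{MZF}, using Theorem \ref{CONC} to kill every cohomology degree except $j=n-1$ in the alternating product defining $\zeta_f^\infty(t)$. Write $m(\lambda) := \dim H^{n-1}(f^{-1}(R);\CC)_\lambda$ for $R \gg 0$ and $\lambda \notin A_f$. First, I would look at the $(1-\lambda t)$-part of
\begin{equation}
\zeta_f^\infty(t) = \prod_{j=0}^{n-1} \det(\id - t\Phi_j^\infty)^{(-1)^j}.
\end{equation}
Since the factor $(1-\lambda t)$ appears in $\det(\id - t\Phi_j^\infty)$ with multiplicity $\dim H^j(f^{-1}(R);\CC)_\lambda$, the total multiplicity of $(1-\lambda t)$ in $\zeta_f^\infty(t)$ is $\sum_j (-1)^j \dim H^j(f^{-1}(R);\CC)_\lambda$. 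By the concentration of Theorem \ref{CONC}, for $\lambda \notin A_f$ this collapses to $(-1)^{n-1} m(\lambda)$.

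Next I would evaluate the same multiplicity from the product formula of Theorem \ref{MZF}. The factor $(1-\lambda t)$ divides $(1-t^{d_\gamma})$ (with multiplicity one) if and only if $\lambda^{d_\gamma}=1$. The crucial observation is that for $\lambda \notin A_f$ the definition of $A_f$ in Definition \ref{AEV} forces $\lambda^{d_\gamma} \neq 1$ whenever $\gamma$ is a non-admissible face at infinity; consequently no non-admissible $\gamma$ contributes to the $(1-\lambda t)$-part. Also, only those admissible $\gamma$ with $m_\gamma = 0$ appear in the product of Theorem \ref{MZF}. Therefore the multiplicity of $(1-\lambda t)$ in $\zeta_f^\infty(t)$ is
\begin{equation}
\sum_{\substack{\gamma \text{ admissible at infinity}\\ m_\gamma = 0,\; \lambda^{d_\gamma}=1}} (-1)^{s_\gamma - 1} \Vol_\ZZ(\gamma).
\end{equation}

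Equating the two expressions gives
\begin{equation}
m(\lambda) = \sum_{\substack{\gamma \text{ admissible at infinity}\\ m_\gamma = 0,\; \lambda^{d_\gamma}=1}} (-1)^{n - s_\gamma} \Vol_\ZZ(\gamma),
\end{equation}
after absorbing the factor $(-1)^{n-1}$ into the exponent (using $(-1)^{n-1}\cdot(-1)^{s_\gamma - 1} = (-1)^{n-s_\gamma}$). This is exactly the multiplicity of the factor $(1-\lambda t)$ in the rational function displayed in the statement. There is no serious obstacle once the two ingredients are in place; the only point requiring care is the sign bookkeeping and the observation that $\lambda \notin A_f$ excludes precisely the non-admissible contributions, which is built into the very definition of $A_f$.
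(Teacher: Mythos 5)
Your argument is correct and is precisely the intended one: combine Theorem \ref{CONC} (for $\lambda\notin A_f$ only $j=n-1$ survives in the alternating product defining $\zeta_f^\infty$, contributing $(-1)^{n-1}\dim H^{n-1}(f^{-1}(R);\CC)_\lambda$) with Theorem \ref{MZF}, noting that the definition of $A_f$ kills the contribution of every non-admissible face with $m_\gamma=0$, and the sign adjustment $(-1)^{n-1}\cdot(-1)^{s_\gamma-1}=(-1)^{n-s_\gamma}$ yields the displayed rational function. The paper leaves this deduction implicit, and your write-up supplies exactly the bookkeeping it expects.
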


\section{Monodromies around atypical fibers}\label{sec:6}

Let $f: U \longrightarrow \CC$ be a polynomial map 
of an affine algebraic variety $U$ 
and $B_f \subset \CC$ the set of its bifurcation 
points. For a point $b \in B_f$ we choose sufficiently 
small $\e >0$ such that 
\begin{equation}
B_f \cap \{ x \in \CC \ | \ |x-b| \leq \e \} 
= \{ b \} 
\end{equation}
and set $C_{\e}(b)= 
\{ x \in \CC \ | \ |x-b|= \e \} \subset \CC$. 
Then we obtain a locally trivial fibration 
$f^{-1}(C_{\e}(b)) \longrightarrow C_{\e}(b)$ 
over the small circle $C_{\e}(b) \subset \CC$ 
and the monodromy automorphisms 
\begin{equation}
\Phi_j^b \colon H^j(f^{-1}(b+ \e ) ;\CC) 
\simto 
H^j(f^{-1}(b+ \e ) ;\CC) \ \ (j=0,1,\ldots)
\end{equation}
around the atypical fiber $f^{-1}(b) \subset U$ 
associated to it. We can construct $\Phi_j^b$'s 
functorially as follows. 
Let $h^b$ be a holomorphic 
local coordinate of $\CC$ on a neighborhood 
of $b \in B_f$ such that $b= \{ h^b(x)=0 \}$. 
Then to the object $\psi_{h^b}(Rf_!\CC_{U})\in 
\Dbc(\{ b \})$ and the 
semisimple part of the monodromy 
automorphism acting on it, we can associate 
an element
\begin{equation}
[H_f^{b}] \in \KK_0(\HSm). 
\end{equation}
Recall that the weight filtration 
of $[H_f^{b}]$ is a relative one.  In this situation, we can 
apply our methods in previous sections to 
the Jordan normal forms of $\Phi_j^b$. 
For the sake of simplicity, let us assume 
here that the central fiber $f^{-1}(b) \subset U$ 
is reduced and has only isolated singular 
points $p_1, p_2, \ldots, p_l 
\in f^{-1}(b) \subset U$. 
When $f$ is not tame at infinity, we have to 
consider also the singularities at infinity of $f$. 
For this purpose, let $X$ be a smooth 
compactification of $U$ for which 
there exists a commutative diagram 

\begin{equation}
\begin{CD}
U  @>{\iota}>> X 
\\
@V{f}VV   @VV{g}V
\\
\CC @>>{j}> \PP^1
\end{CD}
\end{equation}
of holomorphic maps. Here $\iota$ and $j$ 
are inclusion maps and $g$ is proper. 
We may assume also that the divisor at 
infinity $D=X \setminus U \subset X$ is 
normal crossing and all its irreducible 
components are smooth. We call the irreducible 
components of $D$ contained 
in $g^{-1}( \infty ) \subset D$ (resp. 
in $\overline{D \setminus g^{-1}( \infty )} 
\subset D$) ``vertical" (resp. ``horizontal") 
divisors at infinity of $f$ in $X$. For 
the normal crossing divisor $D$ let us 
consider the standard (minimal) 
stratification. Then for simplicity 
we assume also that 
the restriction 
$g|_{D \setminus g^{-1}( \infty )}: 
D \setminus g^{-1}( \infty ) \longrightarrow 
\CC$ of $g$ to the horizontal part 
$D \setminus g^{-1}( \infty )$ of $D$ 
has only stratified isolated singular 
points $p_{l+1}, \ldots, p_{l+r}$ in 
$g^{-1}(b) \subset X$ and all of them 
are contained in the smooth part 
of $D \setminus g^{-1}( \infty )$. 
By our assumption on $f^{-1}(b) \subset U$ 
this implies that the hypersurface 
$\overline{f^{-1}(b)} =g^{-1}(b) 
\subset X$ in $X$ has also 
an isolated singular point at each 
$p_i$ ($l+1 \leq i \leq l+r$). 

\begin{remark} 
If $U= \CC^n$, $b \not= f(0)$ and 
in addition to the conditions in Theorems 
\ref{CONC} and \ref{MAIN} (i.e. 
$\dim \Gamma_{\infty}(f)=n$ 
and $f$ is non-degenerate at infinity) 
we assume that for any atypical face 
$\gamma \prec \Gamma_{\infty}(f)$ 
such that $\dim \gamma 
<n-1$ the $\gamma$-part $f_{\gamma}: 
(\CC^*)^{n} \longrightarrow \CC$ 
of $f$ does not have the critical value $b$, then 
the meromorphic extension $g$ of $f$ to 
the compactification $X= \tl{X_{\Sigma}}$ 
satisfies the above-mentioned property 
in general (see also N{\'e}methi-Zaharia \cite{N-Z}, 
Zaharia \cite{Zaharia}). 
In this case the stratified 
isolated singular points 
$p_{l+1}, \ldots, p_{l+r}$ are on the 
$(n-1)$-dimensional horizontal $T$-orbits 
which correspond to the atypical facets 
of $\Gamma_{\infty}(f)$. 
\end{remark} 
For $l+1 \leq i \leq l+r$, in a neighborhood of 
$p_i$ the divisor $D$ is smooth and the 
function $g|_D : D \simeq \CC^{n-1} 
\longrightarrow \CC$ 
has an isolated singular point at $p_i \in D$. 
Therefore we may consider the (local) 
Milnor monodromies of 
$g|_D: D \simeq \CC^{n-1} 
\longrightarrow \CC$ at $p_i \in D$. 
Denote by $A_{f,b} \subset \CC$ the union of 
their eigenvalues and $1 \in \CC$. Then by 
applying the proof of Theorem \ref{CONC} 
to this situation, we obtain the following 
result. For $\lambda \in \CC$ and $j \in \ZZ$ let 
$H^{j}(f^{-1}(b + \e );\CC)_{\lambda} \subset 
H^{j}(f^{-1}(b + \e );\CC)$ be the 
generalized eigenspace for the eigenvalue 
$\lambda$ of the monodromy $\Phi_{j}^b$ 
around $f^{-1}(b)$. 

\begin{theorem}\label{CONCE} 
In the situation as above, for any 
$\lambda \notin A_{f,b}$ we have 
the concentration 
\begin{equation}
H^{j}( f^{-1}(b + \e );\CC)_{\lambda} 
\simeq 0 \qquad (j \not= n-1).  
\end{equation}
Moreover for such $\lambda$ the relative 
monodromy filtration of 
$H^{n-1}( f^{-1}(b + \e );\CC)_{\lambda}$ 
coincides with the absolute 
one (up to some shift). 
\fin
\end{theorem}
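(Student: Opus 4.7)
The plan is to run the proof of Theorem \ref{CONC} localized near the bifurcation point $b$. Using the compactification $X$ and the diagram in the statement, I would introduce $K := g^{-1}( \Delta_b)$ for a small disk $\Delta_b$ around $b$, together with $\kappa := g|_K : K \longrightarrow \Delta_b$, and observe that the divisor $D \cap K$ has no vertical component (since $b \neq \infty$) and reduces to the horizontal part $D_{\mathrm{hor}} \cap K$. Let $i : U \hookrightarrow K$ and $i_D : D_{\mathrm{hor}} \cap K \hookrightarrow K$ denote the open and closed inclusions, and $h^b$ a local coordinate on $\CC$ vanishing at $b$.

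Applying $R\kappa_* = R\kappa_!$ to the excision triangle
\begin{equation*}
i_! \CC_U \longrightarrow Ri_* \CC_U \longrightarrow (i_D)_* i_D^{-1}(Ri_* \CC_U) \longrightarrow +1
\end{equation*}
yields a distinguished triangle
\begin{equation*}
Rf_! \CC_U \longrightarrow Rf_* \CC_U \longrightarrow R(\kappa|_{D_{\mathrm{hor}} \cap K})_* i_D^{-1}(Ri_* \CC_U) \longrightarrow +1,
\end{equation*}
and the crucial step is to check that $\psi_{h^b,\lambda}$ of the third term vanishes for $\lambda \notin A_{f,b}$. Away from the points $p_{l+1}, \ldots, p_{l+r}$, the restriction $g|_{D_{\mathrm{hor}}}$ is a submersion above $b$, so the local monodromy there acts trivially and contributes only the eigenvalue $1 \in A_{f,b}$. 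At each $p_i$ with $l+1 \leq i \leq l+r$, the eigenvalues occurring in the nearby cycle sheaf are, by definition of $A_{f,b}$, precisely the local Milnor monodromy eigenvalues of $g|_D$ at $p_i$, and hence lie in $A_{f,b}$. Therefore for $\lambda \notin A_{f,b}$ we obtain the isomorphism
\begin{equation*}
\psi_{h^b,\lambda}(Rf_! \CC_U) \simeq \psi_{h^b,\lambda}(Rf_* \CC_U),
\end{equation*}
i.e.\ $H_c^j(f^{-1}(b+\e);\CC)_\lambda \simeq H^j(f^{-1}(b+\e);\CC)_\lambda$ for every $j \in \ZZ$.

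Since $f^{-1}(b+\e)$ is an $(n-1)$-dimensional affine variety, the left-hand side vanishes for $j < n-1$ while the right-hand side vanishes for $j > n-1$; the common group is therefore concentrated in degree $n-1$, giving the first assertion. The final statement about the relative monodromy filtration coinciding with the absolute one then follows, as at the end of the proof of Theorem \ref{CONC}, by applying the argument of Sabbah \cite[Theorem 13.1]{Sabbah-2} to the isomorphism above.

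The main obstacle is the careful bookkeeping on the horizontal divisor $D_{\mathrm{hor}}$ near $b$: one must confirm that every non-trivial local monodromy eigenvalue arising from the stratified singularities of $g|_D$ above $b$ is captured by $A_{f,b}$, and that on the complement of these points the local submersion property really does force triviality of $\psi_{h^b}$. The hypothesis that the singularities of $g|_D$ over $b$ are isolated and lie in the smooth part of $D_{\mathrm{hor}}$ is precisely what reduces this analysis to a standard local Milnor fiber computation combined with the triviality of nearby cycles along a submersion, and the rest of the argument is then parallel to the proof of Theorem \ref{CONC}.
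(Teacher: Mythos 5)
Your proposal is correct and is, in substance, the paper's own argument: the paper proves Theorem \ref{CONCE} precisely by ``applying the proof of Theorem \ref{CONC} to this situation,'' and your localization near $b$ (replacing the compactification at $\infty$ by the preimage of a small disk $\Delta_b$, using $R\kappa_*=R\kappa_!$, the excision triangle for $D\cap K$, and reducing the vanishing of the $\lambda$-part of the third term to eigenvalue bookkeeping on the horizontal divisors) is exactly that localization. The one place to be a bit more careful is the sentence ``away from the $p_i$ the restriction $g|_{D_{\mathrm{hor}}}$ is a submersion'': since $D_{\mathrm{hor}}$ is normal crossing, what you actually need (and what the paper's hypothesis guarantees) is that $g$ is a submersion on \emph{each stratum} of $D_{\mathrm{hor}}$ in a neighborhood of $g^{-1}(b)$ away from the $p_i$; this stratified submersivity is what forces the nearby cycles of $i_D^{-1}(Ri_*\CC_U)$ along $(g-b)|_D$ to have trivial monodromy there. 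Also, near each $p_i$ one should note that $i_D^{-1}(Ri_*\CC_U)$ is locally a sum of shifted constant sheaves (since $p_i$ lies in the smooth part of $D$), so its nearby cycles carry exactly the eigenvalues of the local Milnor monodromy of $g|_D$ at $p_i$, as you assert. With these two clarifications the argument is airtight, and the isomorphism $\psi_{h^b,\lambda}(Rf_!\CC_U)\simeq\psi_{h^b,\lambda}(Rf_*\CC_U)$ together with affineness and Sabbah's argument completes the proof, just as in Theorem \ref{CONC}.
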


\begin{corollary}\label{SYMM} 
Let $\lambda \notin A_{f,b}$. 
Then we have $e^{p,q}( 
[H_f^{b}])_{\lambda}=0$ 
for $(p,q) \notin [0,n-1] \times [0,n-1]$. 
Moreover for any $(p,q) \in [0,n-1] 
\times [0,n-1]$ we have 
the Hodge symmetry 
\begin{equation}
e^{p,q}( [H_f^{b}])_{\lambda}=e^{n-1-q,n-1-p}( 
[H_f^{b}])_{\lambda}.
\end{equation} \fin
\end{corollary}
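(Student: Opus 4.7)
The plan is to deduce Corollary~\ref{SYMM} from Theorem~\ref{CONCE} by repeating, almost verbatim, the short argument that produces the unnumbered corollary for $[H_f^{\infty}]$ from Theorem~\ref{CONC}; no new ingredient is required. First I would invoke Theorem~\ref{CONCE}: for $\lambda \notin A_{f,b}$ the generalized $\lambda$-eigenspaces $H^j(f^{-1}(b+\e);\CC)_{\lambda}$ vanish for $j \neq n-1$, so the class $[H_f^b]_{\lambda} \in \KK_0(\HSm)$ is carried entirely by the single mixed Hodge structure on $H^{n-1}(f^{-1}(b+\e);\CC)_{\lambda}$ equipped with the semisimple part of $\Phi_{n-1}^b$ restricted to this eigenspace.

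Next I would extract from the proof of Theorem~\ref{CONCE} the local-at-$b$ analog of the isomorphism \eqref{PQR},
\begin{equation*}
\psi_{h^b,\lambda}\bigl(j_!\, Rf_!\, \CC_U\bigr) \simeq \psi_{h^b,\lambda}\bigl(j_!\, Rf_*\, \CC_U\bigr),
\end{equation*}
which is precisely the mechanism behind the statement in Theorem~\ref{CONCE} that the relative monodromy filtration on $H^{n-1}(f^{-1}(b+\e);\CC)_{\lambda}$ coincides with the absolute one (up to shift). Taking $(n-1)$-st cohomology at $b$ produces an isomorphism $H^{n-1}_c(f^{-1}(b+\e);\CC)_{\lambda} \simeq H^{n-1}(f^{-1}(b+\e);\CC)_{\lambda}$. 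Since on a smooth variety the weights of $H^j_c$ lie in $[0,j]$ and the weights of $H^j$ lie in $[j,2j]$, this coincidence forces the $\lambda$-part of the MHS to be pure of weight $n-1$.

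From this purity both assertions follow formally. The smoothness and affineness of $f^{-1}(b+\e)$ of dimension $n-1$ restrict the Hodge--Deligne numbers on $H^{n-1}$ to the box $[0,n-1]\times[0,n-1]$, giving the vanishing of $e^{p,q}([H_f^b])_{\lambda}$ outside this range. For the Hodge symmetry, purity of weight $n-1$ concentrates $e^{p,q}([H_f^b])_{\lambda}$ on the diagonal $p+q=n-1$; the complex conjugation symmetry $e^{p,q}=e^{q,p}$ then translates into $e^{p,q}=e^{n-1-q,n-1-p}$ because on that diagonal $n-1-q=p$ and $n-1-p=q$, and both sides vanish off the diagonal.

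The only step that genuinely requires work — and it is already done inside the proof of Theorem~\ref{CONCE} — is the verification that the distinguished-triangle argument of Theorem~\ref{CONC} transports to the local setting at $b$: the role played there by horizontal $T$-orbits and horizontal exceptional divisors is played here by the horizontal divisors of $D = X\setminus U$ above $b$ together with the stratified isolated singularities $p_{l+1},\dots,p_{l+r}$ of $g|_D$, and the eigenvalues of the local Milnor monodromies at these points are precisely what is excluded by the definition of $A_{f,b}$. Once this replacement is in place the proof runs through unchanged, and Corollary~\ref{SYMM} drops out.
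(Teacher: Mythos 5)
Your overall framing — derive Corollary~\ref{SYMM} from Theorem~\ref{CONCE} exactly as the unnumbered corollary is derived from Theorem~\ref{CONC}, reusing the local isomorphism $\psi_{h^b,\lambda}(j_!Rf_!\CC_U)\simeq\psi_{h^b,\lambda}(j_!Rf_*\CC_U)$ — is right, and the description of how $A_{f,b}$ takes over the role of $A_f$ in the horizontal divisor analysis is accurate. The vanishing $e^{p,q}([H_f^b])_\lambda=0$ for $(p,q)\notin[0,n-1]^2$ is also fine: it follows from the concentration in degree $n-1$ together with the bound $0\le p\le n-1$ on the (limit) Hodge filtration of $H^{n-1}$ of a smooth affine $(n-1)$-fold.

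However, there is a genuine error in the step that produces the Hodge symmetry. You assert that the coincidence $H^{n-1}_c\simeq H^{n-1}$ on the $\lambda$-part forces the MHS underlying $[H_f^b]_\lambda$ to be \emph{pure} of weight $n-1$, invoking the Deligne weight bounds ``$H^j_c$ has weights $\le j$, $H^j$ has weights $\ge j$.'' But those estimates pertain to the Deligne mixed Hodge structure of a complex algebraic variety, whereas the MHS packaged in $[H_f^b]$ is Steenbrink's limit MHS on the nearby cycle, whose weight filtration is the \emph{relative monodromy filtration} (as the paper recalls just before Theorem~\ref{CONCE}). That filtration has length governed by the sizes of the Jordan blocks of $\Phi^b_{n-1,\lambda}$, so the $\lambda$-part is emphatically not pure whenever there is a Jordan block of size $\ge2$ — which is the generic situation Theorems~\ref{MMFS} and~\ref{MAINS} are designed to measure. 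Your purity claim would force $e^{p,q}([H_f^b])_\lambda=0$ off the anti-diagonal $p+q=n-1$, contradicting the very Jordan-block formulas the corollary feeds into. The symmetry $e^{p,q}=e^{n-1-q,n-1-p}$ is therefore not a consequence of purity; it comes from a different mechanism: the $Rf_!\simeq Rf_*$ identification on the $\lambda$-part plus Poincar\'e--Verdier duality on the smooth $n$-fold $U$ make $[H_f^b]_\lambda$ self-dual up to a Tate twist $(n-1)$, and the statement in Theorem~\ref{CONCE} that the relative monodromy filtration coincides with the absolute one makes this a polarized (Hodge--Lefschetz) structure in the sense of Sabbah's \cite[Theorem 13.1]{Sabbah-2}. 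It is the combination of self-duality and the absolute monodromy filtration — not purity — that yields the symmetry $e^{p,q}=e^{n-1-q,n-1-p}$ across all weights, including the off-diagonal ones. You should replace the purity argument with this duality/polarization argument.
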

From now on we shall use Theorem \ref{CONCE} 
and Corollary \ref{SYMM} to describe 
explicitly the Jordan normal form 
of $\Phi_{n-1}^b$ in terms of some Newton 
polyhedra associated to $f$. For this purpose, 
assume moreover that for any $1 \leq i \leq l+r$ 
there exists a local coordinate 
$y=(y_1, y_2, \ldots, y_n)$ of $X$ on a 
neighborhood $W_i$ of $p_i$ such that 
$p_i= \{ y=0 \}$ and the 
local defining polynomial $f_i(y) \in 
\CC [y_1, \ldots, y_n]$ of the hypersurface 
$\overline{f^{-1}(b)} =g^{-1}(b)$ (for which we have 
$\overline{f^{-1}(b)} = \{ f_i(y)=0 \}$) 
is convenient and non-degenerate at 
$y=0$ (see \cite{Varchenko} etc.). 
We assume also that for $l+1 \leq i \leq l+r$ 
we have $D= \{ y_n=0 \}$ in $W_i$. 
For $1 \leq i \leq l+r$ let 
$\Gamma_+(f_i) \subset \RR^n_+$ be the 
Newton polyhedron of $f_i$ at $y=0$. 
Moreover for $l+1 \leq i \leq l+r$ we set 
\begin{equation}
\Gamma_+^{\circ}(f_i)= \Gamma_+(f_i) \cap 
\left\{ v=(v_1, \ldots, v_n) \in \RR^n \ | \ 
v_n=0  \right\}. 
\end{equation}
Note that $\Gamma_+^{\circ}(f_i)$ is nothing 
but the Newton polyhedron of the restriction 
$f_i|_D$ of $f_i$ to $D= \{ y_n=0 \}$. 

\begin{definition} 
In the situation as above, 
we say that a complex number $\lambda \in \CC$ 
is an atypical eigenvalue for $b \in B_f$ if 
either $\lambda =1$ or there 
exists a compact face $\gamma 
\prec \Gamma_+^{\circ}(f_i)$ of 
$\Gamma_+^{\circ}(f_i)$ for some 
$l+1 \leq i \leq l+r$ such that 
$\lambda^{d_{\gamma}}=1$. We denote by $A_{f,b}^{\circ} 
\subset \CC$ the set of the atypical 
eigenvalues for $b \in B_f$. 
\end{definition}
By the main theorem of Varchenko 
\cite{Varchenko} we have $A_{f,b} \subset 
A_{f,b}^{\circ}$. On the other hand, 
as in \cite{D-L-2}, \cite{M-T-4} 
and \cite{M-T-5}, for $1 \leq i \leq l+r$ 
by a toric modification 
$\pi_i : Y_i \longrightarrow W_i$ of $W_i$ 
we can explicitly 
construct the motivic Milnor fiber 
$\SS_{f_i,p_i} \in \M_{\CC}^{\hat{\mu}}$ 
of $f_i$ at $p_i$. See \cite{M-T-5} for 
the details. For $l+1 \leq i \leq l+r$ 
let $(W_i \cap D)^{\prime} \subset Y_i$ 
be the proper transform of $W_i \cap D= \{ y_n=0 \}$ 
by $\pi_i$ and $\SS_{f_i,p_i}^{\circ} 
\in \M_{\CC}^{\hat{\mu}}$ the base change of 
$\SS_{f_i,p_i}$ by the inclusion map 
$Y_i \setminus (W_i \cap D)^{\prime} 
\hookrightarrow Y_i$. Let 
$[Z_{f,b}] \in \M_{\CC}^{\hat{\mu}}$ 
be the class of the variety 
$Z_{f,b}= f^{-1}(b) \setminus \{ 
p_1, p_2, \ldots, p_l \}$ with the trivial action of 
$\hat{\mu}$ and set 
\begin{equation}
\SS_f^{b} =[Z_{f,b}] + \sum_{i=1}^l \SS_{f_i,p_i} 
+ \sum_{i=l+1}^{l+r} \SS_{f_i,p_i}^{\circ} 
\in \M_{\CC}^{\hat{\mu}}. 
\end{equation}
Then as in \cite[Theorem 4.4]{M-T-4}, by 
the proof of \cite[Theorem 4.2.1]{D-L-1} 
we obtain the following result.

\begin{theorem}\label{M-M-F}
In $\KK_0(\HSm)$ we have the equality 
\begin{equation}
[H_f^{b}] = \chi_h(\SS_f^{b}). 
\end{equation}\fin
\end{theorem}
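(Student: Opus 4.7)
The plan is to adapt the argument of \cite[Theorem 4.2.1]{D-L-1}, used in the proof of \cite[Theorem 4.4]{M-T-4} and of Theorem \ref{thm:7-6} above, to the present local-at-$b$ situation. First, I would express $[H_f^b]$ sheaf-theoretically: the proper base change theorem for the commutative diagram defining the compactification $g: X \longrightarrow \PP^1$ gives an isomorphism
\begin{equation}
H^j(f^{-1}(b+\e);\CC) \simeq H^j \psi_{h^b}(Rf_! \CC_U) \simeq H^j(g^{-1}(b); \psi_{h^b \circ g}(\iota_! \CC_U)),
\end{equation}
compatible with the semisimple parts of the monodromies. Hence $[H_f^b] \in \KK_0(\HSm)$ is the alternating sum of the associated monodromy-filtered mixed Hodge structures on these cohomology groups.

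Next, I would decompose $g^{-1}(b) \subset X$ into four locally closed strata: (i) the smooth locus $Z_{f,b} = f^{-1}(b) \setminus \{p_1, \ldots, p_l\}$ inside $U$; (ii) the interior singular points $p_1, \ldots, p_l$; (iii) the singularities at infinity $p_{l+1}, \ldots, p_{l+r}$; and (iv) the remainder $g^{-1}(b) \cap D \setminus \{p_{l+1}, \ldots, p_{l+r}\}$. By additivity of $\chi_h$ on $\KK_0(\HSm)$ (applied to the long exact sequences coming from this stratification), $[H_f^b]$ equals the sum of the contributions coming from the four strata. The stratum (iv) contributes nothing: there $g$ is smooth along $D$, but $\iota_! \CC_U$ is zero on $D$, so $\psi_{h^b \circ g}(\iota_! \CC_U)$ vanishes on (iv).

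On stratum (i), $\psi_{h^b \circ g}(\iota_! \CC_U)$ restricts to $\CC_{Z_{f,b}}$ with trivial monodromy, giving the contribution $\chi_h([Z_{f,b}])$. At each interior singular point $p_i$ ($1 \leq i \leq l$), one has $\iota_! \CC_U = \CC_X$ locally, and the local nearby-cycle contribution coincides with that for the germ $f_i: (\CC^n, 0) \longrightarrow (\CC, 0)$; the original Denef-Loeser realization theorem then identifies it with $\chi_h(\SS_{f_i, p_i})$. At each singularity at infinity $p_i$ ($l+1 \leq i \leq l+r$), we have $D = \{y_n=0\}$ in $W_i$, so $\iota_! \CC_U$ is the extension by zero from $\{y_n \neq 0\}$; constructing the toric modification $\pi_i: Y_i \longrightarrow W_i$ and applying the Denef-Loeser formula to the extension-by-zero sheaf, the contribution is computed by removing the strict transform $(W_i \cap D)'$ of $D$ from the variety over which one integrates, which is precisely the definition of $\SS_{f_i, p_i}^{\circ}$.

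The main obstacle is the careful verification of the contribution in case (iii): one must track how the base change by the open inclusion $Y_i \setminus (W_i \cap D)' \hookrightarrow Y_i$ translates, through the Denef-Loeser motivic decomposition formula for the nearby cycles, into the Hodge-theoretic nearby fiber of $\iota_! \CC_U$. Once this identification is in place, gathering all the contributions yields
\begin{equation}
[H_f^b] = \chi_h([Z_{f,b}]) + \sum_{i=1}^{l} \chi_h(\SS_{f_i,p_i}) + \sum_{i=l+1}^{l+r} \chi_h(\SS_{f_i, p_i}^{\circ}) = \chi_h(\SS_f^b),
\end{equation}
which is the desired equality.
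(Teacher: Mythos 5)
Your proposal follows the same strategy the paper intends when it simply invokes the proof of \cite[Theorem 4.4]{M-T-4} (and, through it, \cite[Theorem 4.2.1]{D-L-1}): push forward to the compactification, stratify the special fiber $g^{-1}(b)$, and match the contribution of each stratum with the corresponding term of $\SS_f^b$. Two small corrections are worth recording. First, $H^j\psi_{h^b}(Rf_!\CC_U)$ computes $H^j_c(f^{-1}(b+\e);\CC)$, not $H^j(f^{-1}(b+\e);\CC)$; this is only a slip in your opening isomorphism, since the argument actually proceeds from the second description $H^j\bigl(g^{-1}(b);\psi_{h^b\circ g}(\iota_!\CC_U)\bigr)$, which is correct. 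Second, the justification you give for the vanishing on stratum (iv) is too quick: the fact that $\iota_!\CC_U$ is zero on $D$ does not by itself imply that $\psi_{h^b\circ g}(\iota_!\CC_U)$ vanishes along $D$, since nearby cycles are frequently supported exactly where the coefficient sheaf vanishes. What is really used is that, away from $p_{l+1},\ldots,p_{l+r}$, the hypersurface $g^{-1}(b)$ is smooth and transverse to the normal crossing stratification of $D$, so that in adapted local coordinates the nearby fiber of $\iota_!\CC_U$ at such a point decomposes as a product of punctured polydisks whose relative cohomology $H^*\bigl(\Delta,\Delta\setminus\Delta^*\bigr)$ vanishes. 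With this correction, your decomposition into strata (i)--(iv) and the identification of their contributions with $[Z_{f,b}]$, $\SS_{f_i,p_i}$ and $\SS_{f_i,p_i}^\circ$ is exactly the intended adaptation of the Denef--Loeser argument.
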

By Theorems \ref{CONCE} and \ref{M-M-F} 
and Corollary \ref{SYMM}, for any 
$\lambda \notin A_{f,b}^{\circ}$ we can describe 
explicitly the $\lambda$-part of the 
Jordan normal form 
of $\Phi_{n-1}^b$ as follows. 
For $1 \leq i \leq l+r$ let $\gamma \prec 
\Gamma_+(f_i)$ be a compact face of 
$\Gamma_+(f_i)$. Denote by $\Delta_{\gamma}$ 
the convex hull of $\{0\} 
\sqcup \gamma$ in $\RR^n$. Let 
$\LL(\Delta_{\gamma})$ be the 
$(\dim \gamma +1)$-dimensional linear subspace 
of $\RR^n$ spanned by $\Delta_{\gamma}$ 
and consider the lattice 
$M_{\gamma}=\ZZ^n \cap \LL(\Delta_{\gamma}) 
\simeq \ZZ^{\dim \gamma+1}$ in 
it. Then we set $T_{\Delta_{\gamma}}:
=\Spec (\CC[M_{\gamma}]) \simeq 
(\CC^*)^{\dim \gamma +1}$. Moreover 
for the points $v \in M_{\gamma}$ we 
define their lattice heights $\height 
(v, \gamma) \in \ZZ$ from the affine hyperplane 
$\LL(\gamma)$ in $\LL(\Delta_{\gamma})$ 
so that we have $\height (0, 
\gamma)=d_{\gamma}>0$. 
Then to the group homomorphism $M_{\gamma} 
\longrightarrow \CC^*$ defined by 
$v \longmapsto 
\zeta_{d_{\gamma}}^{-\height (v, \gamma)}$ 
we can naturally associate an 
element $\tau_{\gamma} \in 
T_{\Delta_{\gamma}}$. 
We define a Laurent polynomial 
$g_{\gamma}=\sum_{v \in 
M_{\gamma}}b_v y^v$ 
on $T_{\Delta_{\gamma}}$ by
\begin{equation}
b_v=\begin{cases}
a_v & (v \in \gamma),\\
-1 & (v=0),\\
\ 0 & (\text{otherwise}),
\end{cases}
\end{equation}
where $f_i=\sum_{v \in \ZZ^n_+} a_v y^v$. 
Then we have $NP(g_{\gamma}) 
=\Delta_{\gamma}$, $\supp g_{\gamma} 
\subset \{ 0\} \sqcup \gamma$ and 
the hypersurface $Z_{\Delta_{\gamma}}^*
=\{ y \in T_{\Delta_{\gamma}}\ |\ 
g_{\gamma}(y)=0\}$ is non-degenerate 
by \cite[Proposition 5.3]{M-T-4}. 
Moreover $Z_{\Delta_{\gamma}}^* \subset 
T_{\Delta_{\gamma}}$ is invariant by the 
multiplication $l_{\tau_{\gamma}} 
\colon T_{\Delta_{\gamma}} \simto 
T_{\Delta_{\gamma}}$ by 
$\tau_{\gamma}$, and hence we obtain an element 
$[Z_{\Delta_{\gamma}}^*]$ of 
$\M_{\CC}^{\hat{\mu}}$. Finally we 
define $m_{\gamma} \in \ZZ_+$ as in 
Section \ref{sec:3}. 
Then in the same way as \cite[Theorem 5.7]{M-T-4} and 
\cite[Theorem 4.3]{M-T-5} we 
obtain the following results. 

\begin{theorem}\label{MMFS}
In the situation as above, 
for any $\lambda \notin A_{f,b}^{\circ}$ 
we have the equality 
\begin{equation}
[H_f^{b}]_{\lambda} =
\chi_h(\SS_f^{b})_{\lambda}= 
\sum_{i=1}^{l+r} 
\sum_{\gamma \prec \Gamma_+(f_i)} 
\chi_h((1-\LL)^{m_{\gamma}} \cdot 
[Z_{\Delta_{\gamma}}^*])_{\lambda}
\end{equation}
in $\KK_0(\HSm)$, 
where in the sum 
$\sum_{\gamma \prec \Gamma_+(f_i)}$ 
for $l+1 \leq i \leq l+r$ 
the face $\gamma$ of $\Gamma_{+}(f_i)$ 
ranges through compact ones not contained 
in $\Gamma_+^{\circ}(f_i)$. 
\fin
\end{theorem}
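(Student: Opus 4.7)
The plan is to start from the equality $[H_f^b] = \chi_h(\SS_f^b)$ given by Theorem \ref{M-M-F} and then compute the $\lambda$-part of each summand in the expansion
\[
\SS_f^{b} = [Z_{f,b}] + \sum_{i=1}^{l} \SS_{f_i,p_i} + \sum_{i=l+1}^{l+r} \SS_{f_i,p_i}^{\circ}
\]
by applying the local motivic Milnor fiber formulas of \cite[Theorem 5.7]{M-T-4} and \cite[Theorem 4.3]{M-T-5}. Since the Hodge characteristic morphism $\chi_h$ is additive and the passage $H \mapsto H_\lambda$ is exact, the $\lambda$-part of $\chi_h(\SS_f^b)$ decomposes accordingly, and matching each piece with the claimed term on the right-hand side yields the theorem.

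For the first summand $[Z_{f,b}]$, the $\hat\mu$-action is trivial by construction, so its Hodge realization contributes only to the eigenvalue $1$. Under the assumption $\lambda \notin A_{f,b}^\circ$, which in particular excludes $\lambda = 1$, this term vanishes in the $\lambda$-part. For the interior singularities $1 \leq i \leq l$, the defining polynomial $f_i$ is convenient and non-degenerate at the origin, and \cite[Theorem 5.7]{M-T-4}, applied via a toric modification $\pi_i \colon Y_i \to W_i$ associated to a smooth subdivision of the dual fan of $\Gamma_+(f_i)$, produces exactly the desired expression
\[
\chi_h(\SS_{f_i,p_i})_\lambda = \sum_{\gamma \prec \Gamma_+(f_i)} \chi_h\bigl((1-\LL)^{m_\gamma} [Z_{\Delta_\gamma}^*]\bigr)_\lambda
\]
summed over all compact faces $\gamma$ of $\Gamma_+(f_i)$. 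For the singular points at infinity $l+1 \leq i \leq l+r$, we handle the base change $\SS_{f_i,p_i}^\circ$ through the toric stratification of $Y_i$ as in \cite[Theorem 4.3]{M-T-5}: the strata removed along the proper transform $(W_i \cap D)'$ are precisely those indexed by compact faces $\gamma \subset \Gamma_+^\circ(f_i)$, since $D = \{y_n = 0\}$ locally and $\Gamma_+^\circ(f_i) = \Gamma_+(f_i) \cap \{v_n = 0\}$. Consequently the motivic Milnor fiber formula yields the sum over compact faces of $\Gamma_+(f_i)$ not contained in $\Gamma_+^\circ(f_i)$, which matches the required term.

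The main technical point, and the place where the assumption $\lambda \notin A_{f,b}^\circ$ is crucial, is to ensure that this removal does not introduce any spurious residual contribution in the $\lambda$-part. The eigenvalues that could arise from the removed strata (those indexed by $\gamma \subset \Gamma_+^\circ(f_i)$) satisfy $\lambda^{d_\gamma} = 1$ for some such $\gamma$ and therefore belong to $A_{f,b}^\circ$ by definition. Restricting to $\lambda \notin A_{f,b}^\circ$ thus makes the excluded pieces trivial on the $\lambda$-part, and the identity of Theorem \ref{MMFS} holds without correction terms. The most delicate verification is the combinatorial identification of the removed strata in the toric modification, which proceeds by a careful chart-by-chart inspection of $Y_i$, using that $D = \{y_n = 0\}$ locally and the non-degeneracy of $f_i$ at $p_i$; this is essentially the content already carried out in the proof of \cite[Theorem 4.3]{M-T-5}, which we may invoke here.
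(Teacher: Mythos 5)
Your proposal is correct and follows essentially the same route the paper sketches, namely invoking \cite[Theorem 5.7]{M-T-4} and \cite[Theorem 4.3]{M-T-5} for each local summand of $\SS_f^b$, discarding $[Z_{f,b}]$ via the exclusion $1 \in A_{f,b}^{\circ}$, and showing that the passage from $\SS_{f_i,p_i}$ to $\SS_{f_i,p_i}^{\circ}$ does not change the $\lambda$-part when $\lambda \notin A_{f,b}^{\circ}$. One small imprecision is worth flagging: the claim that the removed strata are \emph{precisely} those indexed by the compact faces $\gamma \subset \Gamma_+^{\circ}(f_i)$ is not a clean bijection --- a smooth subdivision of the dual fan may split the cone $\sigma(\gamma)$ so that only some of the cones with supporting face $\gamma$ contain $\RR_+\overrightarrow{e_n}$ --- but as your final paragraph correctly observes, every stratum (removed or not) whose supporting face lies inside $\Gamma_+^{\circ}(f_i)$ carries a $\mu_{d_\gamma}$-action with $d_\gamma$ the lattice distance of such a face, so its $\lambda$-part vanishes for $\lambda \notin A_{f,b}^{\circ}$ regardless, and the identity holds without correction terms.
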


\begin{theorem}\label{MAINS}
In the situation as above, let 
$\lambda \notin A_{f,b}^{\circ}$ and $k \geq 1$. 
Then the number of the Jordan blocks for the 
eigenvalue $\lambda$ with sizes $\geq k$ in 
$\Phi_{n-1}^{b}$ is equal to
\begin{equation}
(-1)^{n-1}\sum_{p+q=n-2+k, n-1+k}
\left\{ \sum_{i=1}^{l+r} 
\sum_{\gamma \prec \Gamma_+(f_i)} 
e^{p,q} ( \chi_h ((1-\LL)^{m_{\gamma}} \cdot 
[Z_{\Delta_{\gamma}}^*] ))_{\lambda} \right\}, 
\end{equation}
where in the sum 
$\sum_{\gamma \prec \Gamma_+(f_i)}$ 
for $l+1 \leq i \leq l+r$ 
the face $\gamma$ of $\Gamma_{+}(f_i)$ 
ranges through compact ones not contained 
in $\Gamma_+^{\circ}(f_i)$. 
\fin
\end{theorem}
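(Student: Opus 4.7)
The plan is to follow the proof of \cite[Theorem 5.7 (ii)]{M-T-4} essentially verbatim, using as inputs Theorem \ref{CONCE}, Corollary \ref{SYMM} and Theorem \ref{MMFS}. First, I would invoke Theorem \ref{CONCE}: for $\lambda \notin A_{f,b} \subset A_{f,b}^{\circ}$ the generalized $\lambda$-eigenspace $H^{j}(f^{-1}(b+\e);\CC)_{\lambda}$ vanishes for $j \neq n-1$, and on $H^{n-1}(f^{-1}(b+\e);\CC)_{\lambda}$ the relative monodromy filtration coincides with the absolute weight filtration (up to shift). Consequently $[H_f^b]_{\lambda}\in\KK_0(\HSm)$ is concentrated in cohomological degree $n-1$ (with global sign $(-1)^{n-1}$), and its underlying mixed Hodge structure has weight filtration equal to the monodromy filtration centered at weight $n-1$. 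Corollary \ref{SYMM} then guarantees that $e^{p,q}([H_f^b])_{\lambda}=0$ outside $[0,n-1]\times[0,n-1]$ and that the Hodge symmetry $e^{p,q}=e^{n-1-q,n-1-p}$ holds on this $\lambda$-part.

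Next, I would apply the purely Hodge-theoretic identity developed in \cite[Section 2]{M-T-4}: for a mixed Hodge structure whose weight filtration is the monodromy filtration centered at weight $n-1$, the number of Jordan blocks of size $\geq k$ for the eigenvalue $\lambda$ of the quasi-unipotent endomorphism equals
\begin{equation*}
(-1)^{n-1}\sum_{p+q=n-2+k,\,n-1+k} e^{p,q}([H_f^b])_{\lambda}.
\end{equation*}
The sign $(-1)^{n-1}$ records that only degree $n-1$ contributes to the alternating sum defining $[H_f^b]$, and the identity itself follows from the primitive decomposition of the graded pieces of the monodromy filtration combined with the Hodge symmetry of the first step. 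Finally I substitute the motivic formula of Theorem \ref{MMFS},
\begin{equation*}
[H_f^b]_{\lambda}=\sum_{i=1}^{l+r}\sum_{\gamma\prec\Gamma_+(f_i)}\chi_h\bigl((1-\LL)^{m_{\gamma}}\cdot[Z_{\Delta_{\gamma}}^*]\bigr)_{\lambda},
\end{equation*}
subject to the stated face restriction for $l+1\leq i\leq l+r$, and use the additivity of $e^{p,q}$ on $\KK_0(\HSm)$ to obtain the right-hand side of the theorem.

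The main obstacle is justifying the combinatorial identity of the second step, which is exactly where the coincidence of the relative and absolute monodromy filtrations (the last sentence of Theorem \ref{CONCE}) is indispensable; without it the weights of $[H_f^b]_{\lambda}$ could not be read off the monodromy and the primitive-decomposition argument of \cite[Section 2]{M-T-4} would collapse. This is precisely why one must restrict to eigenvalues $\lambda\notin A_{f,b}^{\circ}$: for $\lambda\in A_{f,b}^{\circ}\setminus A_{f,b}$ the motivic formula of Theorem \ref{MMFS} already requires the stronger restriction $\lambda\notin A_{f,b}^{\circ}$, while for $\lambda\in A_{f,b}$ concentration itself fails. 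A secondary, bookkeeping matter is to verify that for a horizontal singular point $p_i$ with $l+1\leq i\leq l+r$, the compact faces of $\Gamma_+(f_i)$ contained in $\Gamma_+^{\circ}(f_i)$ are exactly those whose toric strata on $Y_i$ lie in the proper transform $(W_i\cap D)'$, so that the passage from $\SS_{f_i,p_i}$ to $\SS_{f_i,p_i}^{\circ}$ matches the face restriction in the statement; this is immediate from the construction of the toric modification $\pi_i\colon Y_i\longrightarrow W_i$.
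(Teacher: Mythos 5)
Your proposal is correct and follows essentially the same route as the paper, which derives Theorem \ref{MAINS} by combining Theorem \ref{CONCE} (concentration and the coincidence of relative and absolute monodromy filtrations for $\lambda\notin A_{f,b}\supset$-avoided via $A_{f,b}\subset A_{f,b}^{\circ}$), Corollary \ref{SYMM}, and Theorem \ref{MMFS}, and then invoking the Jordan-block counting formalism of \cite[Section 2]{M-T-4} exactly as in \cite[Theorem 5.7 (ii)]{M-T-4} and \cite[Theorem 4.3]{M-T-5}. Your identification of where the filtration coincidence is indispensable and your bookkeeping remark on the faces contained in $\Gamma_+^{\circ}(f_i)$ match the paper's intended argument.
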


By this theorem and the results in 
\cite[Section 2]{M-T-4}, 
for $\lambda \notin A_{f,b}^{\circ}$ 
we immediately obtain the analogues of 
\cite[Theorems 5.9, 5.14 and 5.16]{M-T-4} 
for the $\lambda$-part of the 
Jordan normal form of $\Phi_{n-1}^{b}$. 
More precisely it suffices to 
neglect the compact faces of 
$\Gamma_+^{\circ}(f_i)$ for $l+1 \leq i \leq l+r$. 
We omit the details.


\begin{thebibliography}{99}


\bibitem{Broughton}
Broughton, S. A. ``Milnor 
numbers and the topology of polynomial 
hypersurfaces", \textit{Invent. Math.}, 
92 (1988): 217-241.

\bibitem{D-K}
Danilov, V. I. and Khovanskii, A. G. 
``Newton polyhedra and an algorithm for 
computing Hodge-Deligne numbers", 
\textit{Math. Ussr Izvestiya}, 29 (1987): 
279-298.

\bibitem{D-L-1}
Denef, J. and Loeser, F. ``Motivic 
Igusa zeta functions", \textit{J. Alg. 
Geom.}, 7 (1998): 505-537.

\bibitem{D-L-2}
Denef, J. and Loeser, F. ``Geometry 
on arc spaces of algebraic varieties", 
\textit{Progr. Math.}, 201 (2001): 327-348.

\bibitem{Dimca}
Dimca, A. \textit{Sheaves in topology}, 
Universitext, Springer-Verlag, 
Berlin, 2004.

\bibitem{E-T}
Esterov, A. and Takeuchi, K. 
``Motivic Milnor fibers over complete 
intersection varieties and their virtual 
Betti numbers", \textit{Int. Math. Res. Not.}, 
Vol. 2012, No. 15 (2012): 3567-3613. 

\bibitem{Fulton}
Fulton, W. \textit{Introduction to 
toric varieties}, Princeton University 
Press, 1993.

\bibitem{L-N-2}
Garc{\'i}a L{\'o}pez, R. and N{\'e}methi, A. 
``Hodge numbers attached to a 
polynomial map", 
\textit{Ann. Inst. Fourier}, 49 (1999): 1547-1579.

\bibitem{G-L-M}
Guibert, G., Loeser, F. and Merle, M. 
``Iterated vanishing cycles, 
convolution, and a motivic analogue 
of a conjecture of Steenbrink", 
\textit{Duke Math. J.}, 132 (2006): 409-457.

\bibitem{H-T-T}
Hotta, R., Takeuchi, K. and Tanisaki, T. 
\textit{D-modules, perverse 
sheaves, and representation theory}, 
Birkh{\"a}user Boston, 2008.

\bibitem{K-S}
Kashiwara, M. and Schapira, P. 
\textit{Sheaves on manifolds}, 
Springer-Verlag, 1990.

\bibitem{Kushnirenko}
Kouchnirenko, A. G. ``Poly\'edres 
de Newton et nombres de Milnor", 
\textit{Invent. Math.}, 32 (1976): 1-31.

\bibitem{L-S}
Libgober, A. and Sperber, S. 
``On the zeta function of monodromy of a 
polynomial map", \textit{Compositio Math.}, 
95 (1995): 287-307.

\bibitem{M-T-1}
Matsui, Y. and Takeuchi, K. 
``Milnor fibers over singular toric varieties 
and nearby cycle sheaves", 
\textit{Tohoku Math. J.}, 63 (2011): 113-136.

\bibitem{M-T-2}
Matsui, Y. and Takeuchi,  K. 
``Monodromy zeta functions at infinity, Newton 
polyhedra and constructible sheaves", 
\textit{Mathematische Zeitschrift}, 
268 (2011): 409-439.

\bibitem{M-T-3}
Matsui, Y. and Takeuchi, K. 
``A geometric degree formula for 
$A$-discriminants and Euler 
obstructions of toric varieties", \textit{Adv. 
in Math.}, 226 (2011): 2040-2064.

\bibitem{M-T-4}
Matsui, Y. and Takeuchi, K. ``Monodromy 
at infinity of polynomial maps and 
Newton polyhedra, with Appendix by C. Sabbah", 
to appear in \textit{Int. Math. Res. Not}.

\bibitem{M-T-5}
Matsui, Y. and Takeuchi, K. ``Motivic Milnor 
fibers and Jordan normal forms of Milnor monodromies", 
arXiv:1202.5076v1, submitted.

\bibitem{M-T-6}
Matsui, Y. and Takeuchi, K. ``On the sizes of 
the Jordan blocks of monodromies at infinity", 
arXiv:1202.5077v1, submitted.

%\bibitem{Milnor}
%Milnor, J. \textit{Singular 
%points of complex hypersurfaces}, Princeton 
%University Press, 1968.

\bibitem{N-S}
N{\'e}methi, A. and Sabbah, C. 
``Semicontinuity of the spectrum at infinity", 
\textit{Abh. Math. Sem. Univ. Hamburg}, 
69 (1999): 25-35.

\bibitem{N-Z}
N{\'e}methi, A. and Zaharia, A. 
``On the bifurcation set of a polynomial 
function and Newton boundary", 
\textit{Publ. Res. Inst. Math. Sci.}, 
26 (1990): 681-689.

\bibitem{Oda}
Oda, T. \textit{Convex bodies and 
algebraic geometry. An introduction to the 
theory of toric varieties}, Springer-Verlag, 1988.

\bibitem{Oka}
Oka, M. \textit{Non-degenerate complete 
intersection singularity}, Hermann, 
Paris (1997).

\bibitem{Raibaut}
Raibaut, M. ``Fibre de Milnor 
motivique {\`a} l'infini", \textit{C. R. Acad. 
Sci. Paris S{\'er}. I Math.}, 348 (2010): 419-422.

\bibitem{Sabbah-1}
Sabbah, C. ``Monodromy at infinity 
and Fourier transform", \textit{Publ. 
Res. Inst. Math. Sci.}, 33 (1997): 643-685.

\bibitem{Sabbah-2}
Sabbah, C. ``Hypergeometric periods 
for a tame polynomial", \textit{Port. 
Math.}, 63 (2006): 173-226.

\bibitem{S-T-1}
Siersma, D. and Tib{\u a}r, M. 
``Singularities at infinity and their 
vanishing cycles", \textit{Duke Math. J.}, 
80 (1995): 771-783.

\bibitem{Steenbrink}
Steenbrink, J. H. M. ``Mixed Hodge 
structures on the vanishing cohomology", 
\textit{Real and Complex Singularities}, 
Sijthoff and Noordhoff, Alphen aan 
den Rijn, (1977): 525-563.

\bibitem{S-Z}
Steenbrink, J. H. M. and Zucker, S. 
``Variation of mixed Hodge structure I", 
\textit{Invent. Math.}, 80 (1985): 489-542.

\bibitem{Tibar}
Tib{\u a}r, M. ``Topology at infinity of polynomial 
mappings and Thom regularity condition", 
\textit{Compositio Math.}, 
111, no.1 (1998), 89-109.

\bibitem{Tibar-book}
Tib{\u a}r, M. 
\textit{Polynomials and vanishing cycles}, 
Cambridge University Press, 2007.

\bibitem{Varchenko}
Varchenko, A. N. ``Zeta-function 
of monodromy and Newton's diagram", 
\textit{Invent. Math.}, 37 (1976): 253-262.

%\bibitem{Voisin}
%Voisin, C. \textit{Hodge theory 
%and complex algebraic geometry, I}, 
%Cambridge University Press, 2007.

\bibitem{Zaharia}
Zaharia A. ``On the bifurcation set of a polynomial 
function and Newton boundary II", 
\textit{Kodai Math. J.}, 
19 (1996): 218-233.


\end{thebibliography}
\end{document}